\documentclass[12pt,a4paper,oneside]{amsart}

\usepackage[left=1.1in, right=0.8in, top=1.0in, bottom=1.0in]{geometry}
\usepackage{xcolor}
\usepackage{comment}
\usepackage{graphicx}
\usepackage{setspace}
\usepackage{indentfirst} 
\usepackage{cases}
\usepackage{wrapfig}
\usepackage[toc,page]{appendix}
\usepackage{amsmath}
\usepackage{amsthm}
\usepackage{amssymb}
\usepackage{enumerate}
\usepackage{mathrsfs}
\usepackage{dcolumn}
\usepackage{hyperref}
\usepackage{cite}
\usepackage{textgreek}

\newcolumntype{L}{D{.}{.}{2,5}}

\newtheorem{theorem}{Theorem}

\newtheorem{proposition}{Proposition}
\newtheorem{lemma}{Lemma}
\newtheorem{corollary}[proposition]{Corollary}
\newtheorem{mydef}{Definition}

\newtheorem{question}{Question}
\newtheorem{remark}{Remark}
\newtheorem{example}{Example}
\theoremstyle{definition}

\DeclareMathOperator{\cl}{cl}

\DeclareMathOperator{\co}{co}
\DeclareMathOperator{\wcl}{wcl}

\DeclareMathOperator{\lev}{lev}
\DeclareMathOperator{\CAT}{CAT}

\begin{document}

\title[On a weak topology for Hadamard spaces]{On a weak topology for Hadamard spaces\\ and its applications} 
\author[1]{Arian B\"erd\"ellima}

\address{
 Institut f\"ur Mathematik}
\address{ Technische Universit\"at Berlin}
\address{Stra\ss e des 17. Juni 136}
\address{ D-10623 Berlin, Germany}
 \thanks{Electronic address: \texttt{berdellima@math.tu-berlin.de/berdellima@gmail.com},\\ MSC: 46T99, 54D80, 54E45, 54D30}
\maketitle

\begin{abstract}
	We investigate if an existing notion of weak sequential convergence in a Hadamard space can be induced by a topology. We provide an answer on what we call weakly proper Hadamard spaces. A notion of dual space is proposed and it is shown that our weak topology and dual space coincide with the standard ones in the case of a Hilbert space. Moreover we introduce the space of geodesic segments and a corresponding weak topology, and we show that this space is homeomorphic to its underlying Hadamard space. As an application of it we show the existence of a geodesic segment that acts as direction of steepest descent for a geodesically differentiable function whose geodesic derivative satisfies certain properties. Finally we extend several results from classical functional analysis to the setting of Hadamard spaces, and we compare our topology with other existing notions of weak topologies. 
\end{abstract} 


\section{Introduction}
Every metric space can be equipped with a canonical topology induced by its metric. This topology 
characterizes the usual (strong) convergence of sequences. It has been of interest to analysts to study weaker notions of convergence for sequences in a metric space and if possible to identify topologies that induce such convergences.
In \cite[Lim, 1976]{Lim} introduced the notion of $\Delta$-convergence for a general metric space. This form of convergence is weaker then metric convergence, therefore offering a window into what it might mean, for the standard concept of weak convergence in a linear space, in the setting of a possibly non-linear metric space.
In \cite{Kirk} the notion of $\Delta$-convergence was adopted in the setting of a $\CAT(0)$ space. These are uniquely geodesic metric spaces where every so-called geodesic triangle is thinner than its comparison triangle in the Euclidean plane. A $\CAT(0)$ space is often referred to as a space of non-positive curvature in the sense of Alexandrov \cite{Alexandrov}. For an extensive treatment of these spaces, 
see, e.g., \cite{Brid, Burago, Alexander2019}.
A complete $\CAT(0)$ space is known as a Hadamard space and we denote it by $(H,d)$. One basic example of a Hadamard space is Hilbert spaces and the notion of $\Delta$-convergence coincides with the usual weak convergence there.

An open question has been the identification of a topology $\tau_{\Delta}$ on $H$ such that for a given bounded sequence $(x_n)\subset H$ and $x\in H$ we have $x_n\overset{\Delta}\to x$ if and only if $(x_n)$ converges sequentially to $x$ with respect to $\tau_{\Delta}$. It is known that a bounded sequence $x_n\overset{\Delta}\to x$ if and only if the metric projections $P_{\gamma}x_n$ converge to $x$ along any geodesic $\gamma$ starting at $x$. It is in the light of this equivalent definition, which we wish to refer to as the \emph{weak convergence} of a bounded sequence, that \cite[Lytchak--Petrunin]{Lytchak} recently settled the open problem. However the question of constructing a topology that also characterizes weak convergence of unbounded sequences remained open. Dropping the requirement for boundedness in our study of weak convergence highlights one of the key differences between a general Hadamard space and Hilbert spaces, since for the latter the Banach--Steinhaus theorem implies that every weakly convergent sequence must indeed be bounded. 
Inspired by a question of Ba\v cak~\cite{Bacak3}, the author had offered in his thesis \cite{BerdellimaPhD} such a weak topology $\tau_w$ on what we call  \emph{weakly proper} Hadamard spaces. Weakly properness entails a certain topological regularity condition for the open sets of $\tau_w$ (see Definition \ref{d:weakly-proper}).
Although initially we did not know whether or not all Hadamard spaces were weakly proper, \cite[\S 4]{Lytchak} provides a counterexample. Nevertheless a wide range of Hadamard spaces are in fact weakly proper among which Hilbert spaces and any locally compact Hadamard space.
 Our work is a presentation of the main results in \cite{BerdellimaPhD} and develops as follows. In \S\ref{s:preliminaries} we provide some preliminary definitions and results. Next in \S\ref{sec:weak}, inspired by a suggestion of Ba\v cak\cite{Bacak3}, we construct the weak topology $\tau_w$ and introduce the notion of a weakly proper Hadamard space. We show that on a weakly proper Hadamard space, weak sequential convergence and weak convergence coincide (Theorem \ref{wtopBacak}) and in particular a weakly proper Hadamard space is always Hausdorff in $\tau_w$ (Lemma \ref{wtopHaus}). Furthermore we show that many useful properties of the weak topology in a Hilbert space, extend naturally to the setting of a general Hadamard space. For example a (geodesically) convex set is closed if and only if it is weakly closed (Theorem \ref{wtopcvx}) and the analogue of Mazur's Lemma also holds true (Theorem \ref{wtopMazur}). Additionally it is shown that in a separable Hadamard space weak sequential compactness always implies weak compactness (Theorem \ref{lemma:seqcomp}), and provided that the space is also Hausdorff in $\tau_w$ then weak sequential compactness and weak compactness coincide on bounded sets (Corollary \ref{c:EberleinSmulyan}). A conditional result is given for unbounded sets when the underlying Hadamard space is separable and satisfies a certain regularity condition for its geodesics (Theorem \ref{thm:wcompact-wseqcompact}). Moreover in \S\ref{sec:weak}, differently from \cite{BerdellimaPhD} we give an (unconditional) proof that in a locally compact Hadamard space the metric topology and weak topology coincide (Theorem \ref{wtopstop}). Later in \S\ref{s:dual} we introduce the notion of a dual space and of a weak-* convergence together with its corresponding weak-* topology. By construction the dual space is itself a metric space (Lemma \ref{l:dual}) and that weak-* topology does indeed characterize weak-* convergence (Theorem \ref{wtop*}). In the case of a Hilbert space it is shown that both the weak topology and the dual space coincide with the standard notions (Proposition \ref{p:Hilbertweakcvg}). Moreover the Cartesian product of a Hilbert space and a locally compact Hadamard space is weakly proper (Corollary \ref{c:productweaklyprop}), therefore offering non-Hilbertian examples of weakly proper Hadamard spaces. In \S\ref{s:geo-seg}, with an eye towards convex analysis, we introduce the space of geodesic segments. Equipped with a supremum metric it is shown that the space of geodesic segments is isometric to the underlying Hadamard space, and therefore it is itself a Hadamard space (Theorem \ref{globaliso}). Moreover we can identify a corresponding notion of weak convergence and weak topology and prove that the space of geodesic segments and the underlying Hadamard space are topologically indistinguishable (Proposition \ref{homeomorphic}, Theorem \ref{weakgamma}).  As an application of this space we show the existence of a geodesic segment that acts as direction of steepest descent for a geodesically differentiable function whose geodesic derivative satisfies certain properties (Theorem \ref{descentloccompact}, \ref{descentgeneral}).
Finally in \S \ref{s:other-topo}, we compare our notion to existing notions of weak topologies in Hadamard spaces. In particular, we compare our topology to that of Monod \cite{Monod} and that of Kakavandi \cite{Kakavandi}. We close in \S\ref{ss:discussion} with a short discussion about the weak topology introduced lately by \cite[Lytchak--Petrunin]{Lytchak}.


\section{Preliminaries}
 \label{s:preliminaries}
 \subsection{Hadamard spaces and related notions} Let $(H,d)$ be a Hadamard space. A constant speed geodesic $\gamma:[0,1]\to H$  is a curve that satisfies $d(\gamma(s), \gamma(t))=|s-t| d(\gamma(0), \gamma(1))$ for all $s,t\in [0,1]$. For a given $x\in H$ we define $\Gamma_{x}(H)$ to be the set of all constant speed  geodesics $\gamma:[0,1]\to H$ such that $\gamma(0)=x$. Given $x,y\in H$ we denote by $[x,y]$ the geodesic segment joining $x$ and $y$ and by $x_t=(1-t)x\oplus t y$ the element in $[x,y]$ such that $d(x_t,x)=td(x,y)$ for $t\in[0,1]$.
 A geodesic triangle $\Delta(pqr)$ determined by three points $p,q,r\in H$ consists of three geodesic segments $[p,q],[q,r], [r,p]$. To each geodesic triangle $\Delta(pqr)$ there corresponds a comparison triangle $\Delta(\overline{p}\overline{q}\overline{r})$ in the Euclidean plane $\mathbb{E}^2$ with vertices $\overline{p},\overline{q},\overline{r}$ such that $d(p,q)=\|\overline{p}-\overline{q}\|, d(q,r)=\|\overline{q}-\overline{r}\|$ and $d(p,r)=\|\overline{p}-\overline{r}\|$. A point $\overline{x}\in[\overline p,\overline q]$ is a comparison point in $\mathbb E^2$ for the point $x\in[p,q]$ if $d(p,x)=\|\overline p-\overline x\|$. 
 By definition of a Hadamard space then for every $x\in[p,q]$ and $y\in[p,r]$ we have the so-called $\CAT(0)$ inequality 
 \begin{equation}
 \label{eq:CAT(0)-ineq}
 d(x,y)\leqslant\|\overline{x}-\overline{y}\|.
 \end{equation}
 
 The interior angle of $\Delta(\overline{p}\overline{q}\overline{r})$ at $\overline{p}$ is called the comparison angle between $q$ and $r$ at $p$ and it is denoted by $\overline{\angle}_p(q,r)$. Now let $\gamma$ and $\eta$ be two geodesic segments emanating from the same point $p$, i.e., $\gamma(0)=\eta(0)=p$. The Alexandrov angle between $\gamma$ and $\eta$ at $p$ is the number $\angle_p(\gamma,\eta)\in[0,\pi]$ defined as 
 \begin{equation}
 \label{eq:Alexandrovangle}
 \angle_p(\gamma,\eta):=\limsup_{t,t'\to 0}\overline{\angle}_p(\gamma(t),\eta(t'))
 \end{equation}
 In a Riemannian manifold, the Alexandrov angle coincides with the usual angle. 
 
 A set $C\subseteq H$ is (geodesically) convex if $x,y\in C$ implies that $[x,y]\subseteq C$. We denote by $\co C$ the convex hull of $C$, that is the smallest convex set in $H$ that entirely contains $C$. Let $d(x,C)=\inf\{y\in C\,:\,d(x,y)\}$ and denote by $P_Cx:=\{y\in H\,:\, d(x,y)=d(x,C)\}$ the metric projection of $x$ onto $C$. In general $P_Cx$ can be multivalued or even empty, however when $C$ is a closed and convex set then $P_Cx$ is nonempty and unique for every $x\in H$. Moreover by virtue of \cite[Theorem 2.1.12 ]{Bacak} the following inequality is satisfied
 \begin{equation}
 \label{eq:projection}
 d(y,P_Cx)^2+d(x,P_Cx)^2\leqslant d(x,y)^2,\quad\text{for all}\;y\in C.
 \end{equation}
From inequality \eqref{eq:CAT(0)-ineq} another equivalent characterization for $\CAT(0)$ spaces, and hence also for Hadamard spaces, follows
\begin{equation}
\label{eq:quadratic}
d(x_t,z)^2\leqslant(1-t)d(x,z)^2+td(y,z)^2-t(1-t)d(x,y)^2,\quad\text{for all}\;x,y,z\in H,\,t\in[0,1]. 
\end{equation} 
 
\subsection{$\Delta$-convergence}
Let $(X,d)$ be metric space. A sequence $(x_n)$ in $X$ $\Delta$-converge to $x$, written as $x_n\overset{\Delta}\to x$, if 
\begin{equation*}
 \limsup_{k\to+\infty}d(x_{n_k},x)\leqslant\limsup_{k\to+\infty}d(x_{n_k},y)
\end{equation*}
for every subsequence $(x_{n_k})$ of $(x_n)$ and for every $y\in X$. In Hadamard spaces $\Delta$-convergence has a natural interpretation in terms of the so called asymptotic centers. 
Let $(x_n)\subset H$ be a bounded sequence. 
For $y\in H$ define 
\begin{equation*}
r(y, (x_n)):=\limsup_{n\to \infty}d(x_n,y)
\quad \text{and let}\quad 
r((x_n)):=\inf_{y\in H}r((x_n),y) 
\end{equation*}
denote the asymptotic radius of $(x_{n})$. Then the set 
$
\{y\in H \, | \, r(y, (x_n)) = r((x_n))\}
$
consists of a single point, called the asymptotic center of $(x_{n})$. This is a direct consequence of the square of the metric being a strongly convex function, see inequality \eqref{eq:quadratic}. Accordingly, a bounded sequence $(x_n)\subset H$ is said to be $\Delta$-convergent to a point $x\in H$ whenever $x$ is the asymptotic center of the sequence $(x_{n})$. 
The notion of $\Delta$-convergence in $\CAT(0)$ spaces shares many properties with the usual notion of weak convergence in Banach spaces. For example, $\Delta$-convergence in $\CAT(0)$  satisfies the Kadec--Klee property (see~\cite[Kirk and Panyanak]{Kirk}) and a Banach--Saks-type property (see \cite[Ba\v cak]{Bacak}).

\subsection{Weak convergence}  We develop our work on a notion of weak convergence proposed originally by Jost \cite{Jost} (See also \cite{Sosov, Espi}.) We say that a sequence $(x_n)\subseteq H$  weakly converges to $x\in H$ if and only if $P_{\gamma}x_n\to x$ as $n\to+\infty$ along any geodesic $\gamma:[0,1]\to H$ starting at $x$, where  $P_{\gamma}$ denotes the projection to $\gamma$.  Analogue definition can be stated in terms of nets. We denote weak convergence by  $x_n\overset{w}\to x$. Notice that we do not require boundedness in this definition, but if
$(x_{n})$ is bounded, then $x_n\overset{\Delta}\to x$ if and only if $x_n\overset{w}\to x$, see, e.g., ~\cite[Proposition 3.1.3]{Bacak}. Therefore, our notion of weak convergence agrees with the notion of $\Delta$-convergence for bounded sequences in Hadamard spaces. For Hilbert spaces, our notion  of weak convergence agrees with the usual one, see discussion in \S \ref{ss:Hilbert}. 

\begin{remark}
	\label{r:uniqueness}
	Weak limits are unique. Indeed if $x_n\overset{w}\to x$ and $x_n\overset{w}\to y$ then the elementary inequality $d(x,y)\leqslant d(x,P_{\gamma}x_n)+d(P_{\gamma}x_n,y)=d(x,P_{\gamma}x_n)+d(P_{\tilde{\gamma}}x_n,y)\to 0$ implies $x=y$. Here $\gamma, \tilde \gamma:[0,1]\to H$ are geodesics such that $\gamma(0)=x$, $\gamma(1)=y$ and $\tilde \gamma(0)=y$, $\tilde \gamma(1)=x$. 
\end{remark}


\section{Identification of a weak topology}\label{sec:weak}

\subsection{Construction of open sets} 
Consider a Hadamard space  $(H,d)$. Following the suggestion by Ba\v cak \cite{Bacak3} we say a set $U\subset H$ is weakly open if for every $x\in U$ there exists some $\varepsilon>0$ and a finite family of geodesics $\gamma_1,\gamma_2,...,\gamma_n \in \Gamma_{x}(H)$ such that the set 
\begin{equation}
\label{eq:openset}
U_{x}(\varepsilon;\gamma_1,...,\gamma_n):=\{y\in H: d(x,P_{\gamma_i}y)<\varepsilon\hspace{0.2cm}\forall i=1,2,...,n\}
\end{equation}
is contained in $U$. Here $P_{\gamma_i}y$ denotes the projection of $y$ to the geodesic $\gamma_i$. For a given geodesic segment $\gamma\in\Gamma_x(H)$ we call $U_x(\varepsilon;\gamma)$ an elementary set.
\begin{proposition}
	\label{p:weak topology}
	The collection of weakly open sets $U$ together with the empty set $\emptyset$ define a topology $\tau_w$ on $H$ and we call it the weak topology on $H$. 
\end{proposition}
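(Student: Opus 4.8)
The plan is to verify directly the three axioms defining a topology: that $\emptyset$ and $H$ belong to $\tau_w$, that $\tau_w$ is closed under arbitrary unions, and that it is closed under finite intersections. The membership $\emptyset\in\tau_w$ holds by convention, and arbitrary unions are immediate: if $x$ lies in a union $\bigcup_\alpha U_\alpha$ of weakly open sets, then $x\in U_\alpha$ for some $\alpha$, and any generating set $U_x(\varepsilon;\gamma_1,\ldots,\gamma_n)$ witnessing the weak openness of $U_\alpha$ at $x$ is contained in $U_\alpha$ and hence in the union. To see $H\in\tau_w$ I would fix $x\in H$ and take the constant geodesic $\gamma\equiv x$ in $\Gamma_x(H)$ (or indeed any geodesic) together with any $\varepsilon>0$; then $U_x(\varepsilon;\gamma)\subseteq H$ trivially. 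Note that each quantity $d(x,P_{\gamma_i}y)$ is well defined because the image of a geodesic segment is a closed convex set, so the metric projection $P_{\gamma_i}y$ is a single point (see the discussion preceding \eqref{eq:projection}).

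The only axiom requiring an argument is closure under finite intersection, and by induction it suffices to treat two sets. The crux is the following elementary property of the generating family: for a fixed base point $x$, the sets $U_x(\varepsilon;\gamma_1,\ldots,\gamma_n)$ behave like a neighborhood basis at $x$. Concretely, two features are needed. First, these sets are monotone in the radius: if $0<\varepsilon'\leqslant\varepsilon$ then $U_x(\varepsilon';\gamma_1,\ldots,\gamma_n)\subseteq U_x(\varepsilon;\gamma_1,\ldots,\gamma_n)$, directly from the defining strict inequalities in \eqref{eq:openset}. Second, concatenating the geodesic families realizes the intersection: for a common radius $\varepsilon$,
\begin{equation*}
U_x(\varepsilon;\gamma_1,\ldots,\gamma_m,\eta_1,\ldots,\eta_k)=U_x(\varepsilon;\gamma_1,\ldots,\gamma_m)\cap U_x(\varepsilon;\eta_1,\ldots,\eta_k),
\end{equation*}
which is again immediate from the definition of the generating sets.

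With these two facts the intersection axiom follows. Given weakly open sets $U,V$ and a point $x\in U\cap V$, I would choose by weak openness radii $\varepsilon_1,\varepsilon_2>0$ and finite families $\gamma_1,\ldots,\gamma_m$ and $\eta_1,\ldots,\eta_k$ in $\Gamma_x(H)$ with $U_x(\varepsilon_1;\gamma_1,\ldots,\gamma_m)\subseteq U$ and $U_x(\varepsilon_2;\eta_1,\ldots,\eta_k)\subseteq V$. Setting $\varepsilon:=\min\{\varepsilon_1,\varepsilon_2\}$ and combining the two properties above yields
\begin{equation*}
U_x(\varepsilon;\gamma_1,\ldots,\gamma_m,\eta_1,\ldots,\eta_k)\subseteq U_x(\varepsilon_1;\gamma_1,\ldots,\gamma_m)\cap U_x(\varepsilon_2;\eta_1,\ldots,\eta_k)\subseteq U\cap V,
\end{equation*}
so $U\cap V$ is weakly open. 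Crucially, both $U$ and $V$ supply generating sets centered at the \emph{same} point $x$, so no mismatch of base points arises; this is essentially the only bookkeeping subtlety in the whole argument.

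I do not expect a genuine obstacle here: the statement is a formal consequence of the way the generating sets $U_x(\varepsilon;\gamma_1,\ldots,\gamma_n)$ are attached to individual points, and the geometry of the Hadamard space enters only through the well-definedness of the projections $P_{\gamma_i}$. The one point worth stating carefully is that $\Gamma_x(H)$ is never empty (guaranteed by the constant geodesic), which is what allows $H$ itself to be weakly open, and that shrinking the radius never enlarges a generating set, which is what permits a single common $\varepsilon$ in the intersection step.
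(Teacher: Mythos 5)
Your proposal is correct and follows essentially the same route as the paper: verify the three axioms directly, with the only substantive step being the intersection axiom, handled by taking $\varepsilon=\min\{\varepsilon_1,\varepsilon_2\}$ and concatenating the two geodesic families at the common base point $x$. The two auxiliary observations you isolate (monotonicity in the radius and the concatenation identity) are exactly what the paper's phrase ``by construction it follows'' compresses.
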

\begin{proof}
	It is clear that $H\in \tau_w$ since if $x\in H$ then for any finite family of geodesic segments $\gamma_1,...,\gamma_n\in\Gamma_x(H)$ the set $U_x(\varepsilon;\gamma_1,...,\gamma_n)\subseteq H$ for every $\varepsilon>0$. The empty set $\emptyset$ is in $\tau_w$ by definition. 
	
	Moreover for any collection $\{U_i\}_{i\in I}$ where $I$ is some index set and $U_i$ is weakly open for all $i\in I$ its union is weakly open. To see this let $x\in \bigcup_{i\in I}U_i$ then $x\in U_j$ for some $j\in I$. Since $U_j$ is weakly open then there exist $\varepsilon>0$ and $\gamma_1,...,\gamma_n\in\Gamma_x(H)$ such that $U_x(\varepsilon;\gamma_1,...,\gamma_n)\subseteq U_j\subseteq \bigcup_{i\in I}U_i$. Hence $\bigcup_{i\in I}U_i$ is weakly open.
	
	Now let $U_1$ and $U_2$ be two weakly open sets and let $x\in U_1\cap U_2$. Then there exist $\varepsilon_1,\varepsilon_2>0$ and geodesic segments $\gamma_1,...,\gamma_n,\eta_1,...,\eta_m\in\Gamma_x(H)$ such that $U_{x}(\varepsilon_1;\gamma_1,...,\gamma_n)\subseteq U_1$ and $U_{x}(\varepsilon_2;\eta_1,...,\eta_m)\subseteq U_2$. Let $\varepsilon:=\min\{\varepsilon_1,\varepsilon_2\}$ and consider the set $U_x(\varepsilon; \gamma_1,...,\gamma_n,\eta_1,...,\eta_m)$. By construction it follows that 
	$$U_x(\varepsilon; \gamma_1,...,\gamma_n,\eta_1,...,\eta_m)\subseteq U_{x}(\varepsilon_1;\gamma_1,...,\gamma_n)\cap U_{x}(\varepsilon_2;\eta_1,...,\eta_m)\subseteq U_1\cap U_2$$
	Therefore $U_1\cap U_2$ is weakly open.
\end{proof}
Following standard terminology in topology, see e.g. \cite{Bert, Kelley, Munkres} we study the properties of $\tau_w$.
First to justify the name weakly open we need to show that any set $U$ is open in the usual metric topology.
\begin{lemma}
	\label{l:lemma1}
	Let $(H,d)$ be a Hadmard space. Then $U_{x}(\varepsilon;\gamma)$ is open in the metric topology for every $x\in H$ and $\gamma\in\Gamma_x(H)$.
\end{lemma}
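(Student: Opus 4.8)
The plan is to realize $U_x(\varepsilon;\gamma)$ as the preimage of an open interval under a continuous real-valued function, and then reduce everything to the continuity of a single map. Concretely, I would fix $x\in H$ and $\gamma\in\Gamma_x(H)$ and define $f:H\to[0,\infty)$ by $f(y):=d(x,P_\gamma y)$. Since $\gamma$ is a constant-speed geodesic with $\gamma(0)=x$, its image is the geodesic segment $[x,\gamma(1)]$, which is closed and convex; hence, as recalled in the preliminaries, $P_\gamma y$ is well defined and single valued for every $y\in H$, so $f$ is a genuine function. With this notation $U_x(\varepsilon;\gamma)=\{y\in H: f(y)<\varepsilon\}=f^{-1}\bigl((-\infty,\varepsilon)\bigr)$, so it suffices to prove that $f$ is continuous with respect to the metric topology.

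The heart of the argument, and the step I expect to be the main obstacle, is the continuity of the projection $P_\gamma$. I would establish the stronger statement that $P_\gamma$ is nonexpansive, i.e. $d(P_\gamma y_1,P_\gamma y_2)\leqslant d(y_1,y_2)$ for all $y_1,y_2\in H$. This is a standard property of metric projections onto closed convex subsets of a Hadamard space and can be cited from \cite{Bacak, Brid}; alternatively it can be extracted from inequality \eqref{eq:projection}. Applying \eqref{eq:projection} once to the point $y_1$ together with $P_\gamma y_2\in[x,\gamma(1)]$, and once to $y_2$ together with $P_\gamma y_1$, yields the two Pythagorean-type estimates
\begin{align*}
d(P_\gamma y_2,P_\gamma y_1)^2+d(y_1,P_\gamma y_1)^2&\leqslant d(y_1,P_\gamma y_2)^2,\\
d(P_\gamma y_1,P_\gamma y_2)^2+d(y_2,P_\gamma y_2)^2&\leqslant d(y_2,P_\gamma y_1)^2,
\end{align*}
which encode that the Alexandrov angles at $P_\gamma y_1$ and at $P_\gamma y_2$ along the segment $[P_\gamma y_1,P_\gamma y_2]$ are both at least $\pi/2$; a quadrilateral comparison in $\mathbb{E}^2$ (the exact analogue of adding the two variational inequalities in the Hilbert case) then forces $d(P_\gamma y_1,P_\gamma y_2)\leqslant d(y_1,y_2)$.

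Once nonexpansiveness is in hand the remaining steps are routine: since $y\mapsto d(x,y)$ is $1$-Lipschitz, the reverse triangle inequality combined with nonexpansiveness of $P_\gamma$ gives
\[
|f(y_1)-f(y_2)|=\bigl|d(x,P_\gamma y_1)-d(x,P_\gamma y_2)\bigr|\leqslant d(P_\gamma y_1,P_\gamma y_2)\leqslant d(y_1,y_2),
\]
so $f$ is itself $1$-Lipschitz, hence continuous, and $U_x(\varepsilon;\gamma)=f^{-1}\bigl((-\infty,\varepsilon)\bigr)$ is metrically open. (In the degenerate case $d(x,\gamma(1))=0$ the segment collapses to $\{x\}$, so $P_\gamma y=x$ and $f\equiv 0$, whence $U_x(\varepsilon;\gamma)=H$ is trivially open.) If one prefers to avoid invoking nonexpansiveness as a black box, an alternative is a direct parametric-minimization argument valid for a single geodesic: writing $P_\gamma y=\gamma(t(y))$, the functions $t\mapsto d(y,\gamma(t))^2$ are uniformly strongly convex in $t$ by \eqref{eq:quadratic} and depend continuously on $y$, so their unique minimizers $t(y)$ vary continuously; nonetheless I would prefer the nonexpansiveness route as it is cleaner and keeps the proof short.
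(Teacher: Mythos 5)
Your proposal is correct and follows essentially the same route as the paper: both arguments hinge on the nonexpansiveness of $P_\gamma$ (derived from inequality \eqref{eq:projection}), and your observation that $y\mapsto d(x,P_\gamma y)$ is $1$-Lipschitz is exactly the estimate the paper uses to place an open ball $B(y,\varepsilon-s)$ inside $U_x(\varepsilon;\gamma)$. The only cosmetic difference is that you package the conclusion as the preimage of $[0,\varepsilon)$ under a continuous map rather than exhibiting the ball directly.
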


\begin{proof}
	From inequality \eqref{eq:projection} projections $P_C$ onto closed convex sets $C$ are nonexpansive operators, i.e. $d(P_{C}x,P_Cy)\leqslant d(x,y)$ for all $x,y\in H$. In particular, $P_{\gamma}$ is nonexpansive since every geodesic segment $\gamma$ is a closed convex set, i.e.,  
	\begin{equation}
	\label{eq:projne}
	d(P_{\gamma}x,P_{\gamma}y)\leqslant d(x,y)\hspace{0.2cm}\forall x,y\in H .
	\end{equation}
	Now let $y\in U_{x}(\varepsilon;\gamma)$. Then $s:=d(x,P_{\gamma}y)$ satisfies $s<\varepsilon$.
	Therefore, the open geodesic ball $B(y,\varepsilon-s):=\{z\in H: d(x,y)<\varepsilon-s\}$ is contained in $U_{x}(\varepsilon;\gamma)$. Indeed, let $z\in B(y,\varepsilon-s)$. Then 
	$$d(x,P_{\gamma}z)\leqslant d(x,P_{\gamma}y)+d(P_{\gamma}y, P_{\gamma}z)\leqslant d(x,P_{\gamma}y)+d(y,z) < s + (\varepsilon-s) =\varepsilon.$$
 Therefore, 
	$z \in U_{x}(\varepsilon;\gamma)$ and the proof is completed. 
\end{proof}
We introduce the following notion which plays a central role in this work:
\begin{mydef}
	\label{d:weakly-proper}
	We call a Hadamard space weakly proper if every $x\in H$ is an interior point for every elementary set $U_x(\varepsilon;\gamma)$ in the weak topology $\tau_w$, i.e., if for every $x\in H$ and every  $\gamma \in \Gamma_{x}(H)$ there exists $V\in \tau_w$ such that $x \in V\subseteq U_x(\varepsilon;\gamma)$. 
\end{mydef}
Notice that weak properness does not imply that elementary sets are weakly open, however $H$ is weakly proper whenever all elementary sets are weakly open. Moreover the implication $x_n\to x$ yields $x_n\overset {w}\to x$ is evident, for if $\lim_{n\to \infty}d(x,x_n)=0$ then $d(x,P_{\gamma}x_n)=d(P_{\gamma}x,P_{\gamma}x_n)\leqslant d(x,x_n)$ implies $\lim_{n\to \infty}d(x,P_{\gamma}x_n)=0$ for all $\gamma\in\Gamma_x(H)$.


 
We say that a sequence $(x_n)\subseteq H$ sequentially converges in $\tau_w$ to an element $x\in H$, and we denote this by $x_n\overset{\tau_w}\to x$, if and only if for every weakly open set $U$ containing $x$ all but finitely many elements of the sequence $(x_n)_{n\in\mathbb{N}}$ are in $U$. 

\begin{theorem}
	\label{wtopBacak}
	Let $(x_n)_{n\in\mathbb{N}}\subset H$, and let $x\in H$. If $x_n\overset{w}\to x$ then $x_n\overset{\tau_w}\to x$. Moreover, if the Hadamard space is weakly proper then $x_n\overset{\tau_w}\to x$ implies $x_n\overset{w}\to x$.
\end{theorem}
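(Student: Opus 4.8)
The plan is to treat the two implications separately, noting in advance that only the reverse one uses weak properness.

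For the forward implication I would fix an arbitrary weakly open set $U$ containing $x$ and unwind the definition of weak openness: there exist $\varepsilon>0$ and geodesics $\gamma_1,\ldots,\gamma_k\in\Gamma_x(H)$ with $U_x(\varepsilon;\gamma_1,\ldots,\gamma_k)\subseteq U$. The hypothesis $x_n\overset{w}\to x$ gives, for each fixed index $i$, that $d(x,P_{\gamma_i}x_n)\to 0$ as $n\to\infty$. Choosing $N_i$ beyond which this quantity drops below $\varepsilon$ and setting $N=\max_i N_i$, every $x_n$ with $n\geqslant N$ satisfies $d(x,P_{\gamma_i}x_n)<\varepsilon$ for all $i$, hence lies in $U_x(\varepsilon;\gamma_1,\ldots,\gamma_k)\subseteq U$. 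Because the family of geodesics is finite, taking a maximum over finitely many thresholds is unproblematic, and all but finitely many terms of the sequence lie in $U$, which is precisely $x_n\overset{\tau_w}\to x$.

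For the reverse implication assume $H$ is weakly proper and $x_n\overset{\tau_w}\to x$, and aim to prove $P_\gamma x_n\to x$ along every geodesic $\gamma\in\Gamma_x(H)$. I would fix such a $\gamma$ together with an arbitrary $\varepsilon>0$ and consider the elementary set $U_x(\varepsilon;\gamma)$, which contains $x$ since $P_\gamma x=x$ gives $d(x,P_\gamma x)=0<\varepsilon$. This is where weak properness is invoked: by Definition \ref{d:weakly-proper} there exists a genuinely weakly open set $V\in\tau_w$ with $x\in V\subseteq U_x(\varepsilon;\gamma)$. Sequential $\tau_w$-convergence then forces all but finitely many $x_n$ into $V$, and hence into $U_x(\varepsilon;\gamma)$, so that $d(x,P_\gamma x_n)<\varepsilon$ for all large $n$. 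As $\varepsilon>0$ and $\gamma$ were arbitrary, this yields $P_\gamma x_n\to x$ along every geodesic starting at $x$, i.e.\ $x_n\overset{w}\to x$.

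The main obstacle is exactly the subtlety that the reverse direction is built to circumvent: the elementary sets $U_x(\varepsilon;\gamma)$ need not themselves belong to $\tau_w$ (as noted just after Definition \ref{d:weakly-proper}), so one cannot test $\tau_w$-convergence directly against them. Weak properness is precisely the hypothesis that supplies an honest $\tau_w$-neighborhood of $x$ nested inside each elementary set, thereby bridging the gap between the abstract topological convergence and the concrete projection criterion. Without it this step fails and $\tau_w$-convergence could be strictly weaker than weak convergence. The forward direction, by contrast, requires no such assumption, since it uses only that elementary sets are the building blocks in the definition of weakly open sets, not that they are themselves open.
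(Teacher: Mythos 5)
Your proof is correct and follows essentially the same route as the paper: the forward direction unwinds the definition of weakly open sets and uses finiteness of the family of geodesics, and the reverse direction invokes weak properness to obtain a genuine $\tau_w$-neighborhood $V$ with $x\in V\subseteq U_x(\varepsilon;\gamma)$. The only cosmetic difference is that the paper phrases the reverse implication as a proof by contradiction, whereas you argue directly; the underlying mechanism is identical.
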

\begin{proof} 
	Let $x_n\overset{w}\to x$. Then for every $\gamma\in \Gamma_x(H)$ we have that $\lim_{n\to \infty}d(x,P_{\gamma}x_n)=0$, or equivalently  
	$x_n\in U_x(\varepsilon;\gamma)$ for all sufficiently large $n$. Let $U\in \tau_w$ contain $x$. Then there exist $\gamma_1,...,\gamma_n\in\Gamma_x(H)$ and $\varepsilon>0$ such that $U_x(\varepsilon;\gamma_1,...,\gamma_n)\subseteq U$. Since $x_n\in U_x(\varepsilon;\gamma_1,...,\gamma_n)$ for all sufficiently large $n$, we obtain that  $x_n\in U$ for all sufficiently large $n$.
	
	Let $H$ be weakly proper.
	Suppose that 
	$x_n\overset{\tau_w}\to x$ but $x_n\overset{w}\nrightarrow x$. Then there exists $\gamma\in\Gamma_x(H)$ and $\varepsilon>0$ such that $x_n\notin U_x(\varepsilon;\gamma)$ for infinitely many $n$. Weakly properness implies that there is an open set $V\in\tau_w$ containing $x$ such that $V\subseteq U_x(\varepsilon;\gamma)$. Therefore, $x_n\notin V$ for infinitely many $n$. However, this contradicts $x_n\overset{\tau_w}\to x$.
	\end{proof}
It is clear from Theorem \ref{wtopBacak} that $x_n\to x$ implies $x_n\overset{\tau_w}\to x$.

\begin{lemma}
\label{wtopHaus}
 Every weakly proper Hadamard space is Hausdorff with respect to the weak topology $\tau_w$.
\end{lemma}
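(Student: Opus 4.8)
The plan is to verify the Hausdorff separation axiom directly: given two distinct points $x,y\in H$, I want to produce disjoint weakly open sets $U\ni x$ and $V\ni y$. Since $x\neq y$ we have $\delta:=d(x,y)>0$. The natural choice of separating data is the geodesic segment joining the two points, exploited from both endpoints at once, exactly as in Remark \ref{r:uniqueness}: let $\gamma\in\Gamma_x(H)$ be the constant speed geodesic with $\gamma(0)=x$, $\gamma(1)=y$, and let $\tilde\gamma\in\Gamma_y(H)$ be its reverse, $\tilde\gamma(0)=y$, $\tilde\gamma(1)=x$. The key structural observation is that $\gamma$ and $\tilde\gamma$ have the same image $[x,y]$, which is closed and convex, so the metric projections $P_\gamma$ and $P_{\tilde\gamma}$ are one and the same single-valued map onto $[x,y]$.

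First I would fix $\varepsilon:=\delta/2$ and consider the two elementary sets $U_x(\varepsilon;\gamma)$ and $U_y(\varepsilon;\tilde\gamma)$, each of which contains its centre by construction. I claim these elementary sets are already disjoint. Indeed, suppose $z\in U_x(\varepsilon;\gamma)\cap U_y(\varepsilon;\tilde\gamma)$ and write $p:=P_\gamma z=P_{\tilde\gamma}z\in[x,y]$, where the equality of the two projections is the observation above. Membership in the first set gives $d(x,p)<\varepsilon$, and membership in the second gives $d(y,p)<\varepsilon$, whence by the triangle inequality
\[
d(x,y)\leqslant d(x,p)+d(p,y)<2\varepsilon=\delta=d(x,y),
\]
a contradiction. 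Therefore $U_x(\varepsilon;\gamma)\cap U_y(\varepsilon;\tilde\gamma)=\emptyset$.

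At this point the elementary sets need not themselves belong to $\tau_w$, and this is precisely where weak properness enters and constitutes the only real obstacle. By Definition \ref{d:weakly-proper}, since $H$ is weakly proper, $x$ is an interior point of $U_x(\varepsilon;\gamma)$, so there is $U\in\tau_w$ with $x\in U\subseteq U_x(\varepsilon;\gamma)$; likewise there is $V\in\tau_w$ with $y\in V\subseteq U_y(\varepsilon;\tilde\gamma)$. Since $U\subseteq U_x(\varepsilon;\gamma)$ and $V\subseteq U_y(\varepsilon;\tilde\gamma)$ are contained in disjoint sets, they are themselves disjoint, and they provide the required weakly open separation of $x$ and $y$. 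I expect no further difficulty beyond correctly invoking weak properness to pass from the merely metrically open elementary sets to genuine $\tau_w$-open neighbourhoods; the geometric heart of the argument is the single triangle-inequality estimate above, which hinges on $\gamma$ and $\tilde\gamma$ sharing the same image so that a common projection point can be played off against both endpoints.
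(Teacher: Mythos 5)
Your proof is correct and follows essentially the same route as the paper: separate $x$ and $y$ by elementary sets built from the geodesic $[x,y]$ and its reverse, use the common projection $P_\gamma=P_{\tilde\gamma}$ and the triangle inequality to get disjointness, then invoke weak properness to replace the elementary sets by genuine $\tau_w$-neighbourhoods. The only cosmetic difference is your symmetric choice $\varepsilon=\delta/2$ versus the paper's $\varepsilon$ and $l_\gamma-\varepsilon$, which changes nothing.
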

\begin{proof}
 Let $x,y\in H$ be two distinct points and $\gamma, \tilde \gamma:[0,1]\to H$ the geodesics connecting $x$ with $y$ such that $\gamma(0)=x$, $\gamma(1)=y$ and $\tilde \gamma(0)=y$, $\tilde \gamma(1)=x$. Let $l_{\gamma}:=d(x,y)$, and let $\varepsilon\in (0,l_{\gamma})$. Note that $l_{\gamma}=l_{\tilde\gamma}$ and $P_{\gamma}z=P_{\tilde\gamma}z$ for all $z\in H$.
Define the sets 
$$U_x(\varepsilon; \gamma):=\{z\in H: d(x,P_{\gamma}z)<\varepsilon\}$$
and 
$$U_y(l_{\gamma}-\varepsilon; \tilde\gamma):=\{z\in H: d(y,P_{\tilde\gamma}z)<l_{\gamma}-\varepsilon\} .$$ 
Suppose there is some $z_0\in U_x(\varepsilon; \gamma)\cap U_y(l_{\gamma}-\varepsilon; \tilde\gamma)$ then $d(x,P_{\gamma}z_0)<\varepsilon$ and $d(y,P_{\tilde\gamma}z_0)<l_{\gamma}-\varepsilon$ would imply $$l_{\gamma}=d(x,y)\leqslant d(x,P_{\gamma}z_0)+d(y,P_{\gamma}z_0)= d(x,P_{\gamma}z_0)+d(y,P_{\tilde\gamma}z_0)<l_{\gamma}$$
which is impossible. Therefore $U_x(\varepsilon; \gamma)\cap U_y(l_{\gamma}-\varepsilon; \tilde\gamma)=\emptyset$. Since $(H,d)$ is weakly proper there are $V,W\in\tau_w$ such that $x\in V\subseteq U_x(\varepsilon; \gamma)$ and $y\in W\subseteq U_y(l_{\gamma}-\varepsilon; \tilde\gamma)$. Then $V\cap W\subseteq U_x(\varepsilon; \gamma)\cap U_y(l_{\gamma}-\varepsilon; \tilde\gamma)=\emptyset$. 
\end{proof}

\subsection{Convex sets and compactness}
We say that a set $S\subseteq H$ is weakly closed if it is closed with respect to $\tau_w$. 

\begin{theorem}
\label{wtopcvx}
Let $(H,d)$ be a Hadamard space.
A convex set $C\subseteq H$ is strongly closed if and only if it is weakly closed.
\end{theorem}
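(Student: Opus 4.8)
The plan is to prove the two implications separately, noting that only the forward implication (strongly closed $\Rightarrow$ weakly closed) uses convexity. For the easy direction I would first observe that the weak topology is coarser than the metric topology. Indeed, if $U$ is weakly open and $x\in U$, then by the definition of weakly open sets there are $\varepsilon>0$ and $\gamma_1,\dots,\gamma_n\in\Gamma_x(H)$ with $U_x(\varepsilon;\gamma_1,\dots,\gamma_n)=\bigcap_{i=1}^n U_x(\varepsilon;\gamma_i)\subseteq U$; by Lemma \ref{l:lemma1} each $U_x(\varepsilon;\gamma_i)$ is open in the metric topology, hence so is their finite intersection, which contains $x$. Thus every weakly open $U$ is a metric neighbourhood of each of its points, so every weakly closed set is strongly closed. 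This direction requires no convexity.

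For the converse, suppose $C$ is strongly closed and convex; I want to show that $H\setminus C$ is weakly open. After discarding the trivial cases $C=\emptyset$ and $C=H$, fix $x\notin C$ and let $p:=P_Cx$, which exists and is unique since $C$ is closed and convex. Let $\gamma\in\Gamma_x(H)$ be the geodesic running from $x=\gamma(0)$ to $p=\gamma(1)$. I claim that the elementary set $U_x\bigl(d(x,p);\gamma\bigr)$ contains $x$ (because $P_\gamma x=x$, so $d(x,P_\gamma x)=0$) and is disjoint from $C$. Granting the claim, $H\setminus C$ is weakly open directly from the definition of $\tau_w$, so $C$ is weakly closed and the proof is complete. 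The claim reduces to showing that every $y\in C$ projects onto $\gamma$ at its far endpoint, that is $P_\gamma y=p$: then $d(x,P_\gamma y)=d(x,p)$, so no point of $C$ satisfies $d(x,P_\gamma z)<d(x,p)$.

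The main obstacle is establishing $P_\gamma y=p$. The subtlety is that the quadratic inequality \eqref{eq:quadratic} only furnishes \emph{upper} bounds for the distance from $y$ to interior points of the geodesic, whereas $P_\gamma y=p$ is a \emph{lower} bound statement, so a naive estimate does not close. The fix is to invoke the projection inequality \eqref{eq:projection} twice. First, a one-line triangle-inequality argument shows $P_Cm=p$ for every $m\in[x,p]$: if some $y'\in C$ satisfied $d(m,y')<d(m,p)$, then $d(x,y')\le d(x,m)+d(m,y')<d(x,m)+d(m,p)=d(x,p)$, contradicting $p=P_Cx$. Now set $q:=P_\gamma y\in[x,p]$. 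Applying \eqref{eq:projection} to the projection of $y$ onto $\gamma$, with the point $p\in\gamma$, gives $d(p,q)^2+d(y,q)^2\le d(y,p)^2$, while applying \eqref{eq:projection} to the projection of $q$ onto $C$, namely $P_Cq=p$, with the point $y\in C$, gives $d(y,p)^2+d(q,p)^2\le d(q,y)^2$. Adding these two inequalities forces $2\,d(p,q)^2\le 0$, hence $q=p$, which proves the claim and with it the theorem.
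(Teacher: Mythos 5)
Your proof is correct and follows essentially the same route as the paper: both reduce the claim to showing that every point of $C$ projects onto the geodesic $[x,P_Cx]$ at the endpoint $P_Cx$, and both establish this by adding the same two instances of the projection inequality \eqref{eq:projection} to force $2\,d(p,q)^2\leqslant 0$. The only difference is that you prove the auxiliary fact $P_Cm=p$ for $m\in[x,p]$ directly by the triangle inequality, where the paper cites it from the literature.
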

\begin{proof}
Any weakly closed set is strongly closed. So let $C$ be a strongly closed convex set. We show that $C$ is weakly closed.
It suffices to show that $H\setminus C$ is weakly open.
Let $y\in H\setminus C$. Then $P_Cy$ exists and is unique. Let $\gamma:[0,1]\to H$ be the geodesic connecting $y$ with $P_Cy$ such that $\gamma(0)=y$ and $\gamma(1)=P_Cy$. For $\varepsilon\in(0,l(\gamma))$, where $l(\gamma):=d(y,P_Cy)$ is the length of the geodesic $\gamma$, consider the elementary set 
$U_y(\varepsilon;\gamma)$.
We need only show that $U_y(\varepsilon;\gamma)\cap C=\emptyset$. Let $x\in C$, and let $z\in H$ be arbitrary.
Since both $C$ and $\gamma$  are strongly closed and convex, an application of the inequality \eqref{eq:projection} yields:
$$d(x,z)^2\geqslant d(x,P_{C}z)^2+d(P_Cz,z)^2$$
and 
$$d(x,P_Cy)^2\geqslant d(x,P_{\gamma}x)^2+d(P_{\gamma}x,P_Cy)^2 ,$$
where we have used that $x$ lies in the convex set $C$ for the first inequality and that $P_Cy$ lies in the convex set $ \gamma$ for the second inequality. 
Now let $z=P_\gamma x$ be the projection of $x$ to $\gamma$. Since $z \in \gamma$, we have that $P_C z= P_C y$, see \cite{Brid}. Then the above two inequalities imply that $P_Cy = z= P_{\gamma}x$. From $d(y,P_{\gamma}x)=d(y,P_Cy)>\varepsilon$ for all $x\in C$ it follows that $U_y(\varepsilon;\gamma)\cap C=\emptyset$. Since $y\in H\setminus C$ is arbitrary then $U_y(\varepsilon;\gamma)\subseteq H\setminus C$ yields the claim.
\end{proof}


\begin{theorem}
\label{wtopMazur}(Mazur's Lemma) 
Let $(x_n)\subseteq H$ be a sequence such that $x_n\overset{w}\to x$ for some $x\in H$.  Then there exists a function $N:\mathbb{N}\to\mathbb{N}$ and a sequence $(y_n)\subseteq H$ such that  $y_n\to x$ and $y_n\in \co(\{x_1,x_{2},...,x_{N(n)}\})$ for all $n\in\mathbb{N}$. 
\end{theorem}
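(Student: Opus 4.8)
The plan is to reduce this to the classical Mazur's Lemma from Hilbert space theory via the equivalence between strong and weak closedness for convex sets (Theorem~\ref{wtopcvx}). The key observation is that $\co(\{x_1,\dots,x_N\})$ is a convex set, and its strong closure $\overline{\co}(\{x_1,\dots,x_N\})$ is a strongly closed convex set, hence weakly closed by Theorem~\ref{wtopcvx}. The strategy is therefore to argue that $x$ lies in the weak closure of the convex hull of the tail-free sequence, and then to extract a strongly convergent sequence of convex combinations that reaches $x$ in the limit.

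First I would set $C_n:=\overline{\co}(\{x_k:k\geqslant n\})$, the strong closure of the convex hull of the $n$-th tail of the sequence. Each $C_n$ is a strongly closed convex set, so by Theorem~\ref{wtopcvx} it is weakly closed. The central claim is that $x\in C_n$ for every $n$. To see this, suppose for contradiction that $x\notin C_n$ for some $n$. Since $C_n$ is weakly closed and the weak topology makes $H$ Hausdorff (at least in the weakly proper case; in general one argues directly), $H\setminus C_n$ is a weakly open set containing $x$. By Theorem~\ref{wtopBacak}, weak convergence $x_k\overset{w}\to x$ implies $x_k\overset{\tau_w}\to x$, so all but finitely many $x_k$ lie in any weak neighborhood of $x$, in particular in $H\setminus C_n$. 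But $x_k\in C_n$ for all $k\geqslant n$ by construction, a contradiction. Hence $x\in C_n$ for all $n$.

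Having established $x\in C_n=\overline{\co}(\{x_k:k\geqslant n\})$, I would exploit the definition of strong closure: for each $n$, since $x$ lies in the strong closure of $\co(\{x_k:k\geqslant n\})$, there exists a point $y_n\in\co(\{x_k:k\geqslant n\})$ with $d(x,y_n)<1/n$. Each such $y_n$ is a finite convex combination of elements $x_k$ with $k\geqslant n$, so there is some index $N(n)\geqslant n$ with $y_n\in\co(\{x_n,\dots,x_{N(n)}\})\subseteq\co(\{x_1,\dots,x_{N(n)}\})$. Then $d(x,y_n)<1/n\to 0$ gives $y_n\to x$, which is exactly the desired conclusion. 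The function $N:\mathbb{N}\to\mathbb{N}$ is defined by this choice of indices.

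The main obstacle is the argument that $x\in C_n$, which relies on passing through the weak topology. The clean route uses Theorem~\ref{wtopBacak} to convert weak convergence into $\tau_w$-sequential convergence, but this direction (weak implies $\tau_w$-sequential) holds \emph{unconditionally} and does not require weak properness, so no regularity hypothesis on $H$ is needed here. One must be slightly careful that Theorem~\ref{wtopcvx} applies to give weak closedness of $C_n$ without any extra assumptions, which it does. A minor secondary subtlety is ensuring that the convex hull of a tail, rather than of a finite initial segment, is what contains $x$; the indices must be chosen to grow, but since each $y_n$ is a finite combination this is automatic and the bookkeeping is routine.
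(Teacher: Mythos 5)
Your argument is correct and is essentially the paper's own proof: both hinge on Theorem \ref{wtopcvx} (a strongly closed convex set is weakly closed), the unconditional direction of Theorem \ref{wtopBacak}, and the fact that $\co(\{x_1,x_2,\dots\})$ is the increasing union of the sets $\co(\{x_1,\dots,x_k\})$, with your use of the tails $\co(\{x_k:k\geqslant n\})$ merely delivering the slightly stronger classical form of the lemma. The one step you gloss over, and which the paper proves explicitly via inequality \eqref{eq:quadratic}, is that the strong closure of a convex set in a Hadamard space is again convex --- this is what licenses applying Theorem \ref{wtopcvx} to your sets $C_n$ --- while the aside about Hausdorffness is superfluous, since the complement of a weakly closed set is weakly open by definition.
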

\begin{proof}
Weak convergence $x_n\overset{w}\to x$ implies by virtue of Theorem	\ref{wtopBacak} that $x\in \wcl\{x_1,x_2,...\}$, where $\wcl\{x_1,x_2,...\}$ is the weak closure of $\{x_1,x_2,...\}$. Moreover $$\{x_1, x_2,...\}\subseteq\co(\{x_1, x_2,...\})$$ yields $\wcl\{x_1,x_2,...\}\subseteq\wcl\co(\{x_1,x_2,...\})$, hence we have that $x\in \wcl\co(\{x_1,x_2,...\})$. The strong closure $\cl\co(\{x_1,x_2,...\})$ of the convex set $\co(\{x_1,x_2,...\})$ is a closed convex set. Indeed, if $C$ is (geodesically) convex, then so is $\cl C$. In order to see this, let $u,v \in \cl C$. Consider sequences $(u_n)$ and $(v_n)$ in $C$ such that $u_n \to u$ and $v_n\to v$. Let $\gamma$ be the geodesic connecting $u$ with $v$, and let $\gamma_n$ be the geodesics connecting $u_n$ with $v_n$. From the characterization inequality \eqref{eq:quadratic} we obtain that
\begin{align*}
d(\gamma(t),\gamma_n(t))^2 
\leqslant &(1-t) d(u,\gamma_n(t))^2 + t d(v,\gamma_n(t))^2 - t(1-t)d(u,v)^2  \\
\leqslant &(1-t) \left( (1-t) d(u,u_n)^2 + t d(u,v_n)^2 - t(1-t)d(u_n,v_n)^2 \right) \\
&+ t \left( (1-t) d(v,u_n)^2 + t d(v,v_n)^2 - t(1-t)d(u_n,v_n)^2 \right) \\
&- t(1-t)d(u,v)^2 .
\end{align*}
Taking the limit $n\to \infty$ yields $d(\gamma(t),\gamma_n(t)) \to 0$. Hence $\cl C$ and therefore $\cl\co(\{x_1,x_2,...\})$ are indeed convex. By Theorem \ref{wtopcvx} $\cl\co(\{x_1,x_2,...\})$ is weakly closed.  It follows that $$x\in \wcl\co(\{x_1,x_2,...\})\subseteq\cl\co (\{x_1,x_2,...\} ).$$ 
Then there exists some sequence $(y_n)\subseteq \co (\{x_1,x_2,...\})$ such that $y_n\to x$. Additionally, we have that $\co (\{x_1,x_2,...\})=\bigcup_{k\in\mathbb{N}}\co(\{x_1,x_{2},...,x_{k}\})$. Hence 
$y_n\in \co(\{x_1,x_{2},...,x_{k(n)}\})$ for some $k(n)$. For each $n$ set $N(n):=k(n)$.
\end{proof}

A set $K\subseteq H$ is called weakly sequentially compact if every sequence in $K$ has a weakly convergent subsequence. Similarly, $K$ is called $\tau_w$-sequentially compact if every sequence in $K$ has a $\tau_w$-convergent subsequence. Weak sequential compactness implies $\tau_w$-sequential compactness, and these notions coincide on a weakly proper Hadamard space.
Finally, a set $K$ is called weakly compact if for any open cover in $\tau_w$ of $K$ there is a finite subcover of $K$. 

\begin{lemma}\cite[Proposition 3.1.2]{Bacak}
\label{l:Bacak}
Every bounded sequence in a Hadamard space has a weakly convergent subsequence.
\end{lemma}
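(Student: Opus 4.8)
The plan is to reduce the statement to the existence of a $\Delta$-convergent subsequence and then to extract one by a diagonal argument driven by the monotonicity of the asymptotic radius. Since $(x_n)$ is bounded, by the equivalence of $\Delta$-convergence and weak convergence for bounded sequences (recorded above) it suffices to produce a subsequence $(x_{n_k})$ and a point $x\in H$ with $x_{n_k}\overset{\Delta}\to x$. By the definition of $\Delta$-convergence this amounts to finding a subsequence all of whose further subsequences share the \emph{same} asymptotic center $x$. Throughout I would use two facts already available: every bounded sequence has a \emph{unique} asymptotic center, a consequence of the strong convexity of $d^2$ in \eqref{eq:quadratic}; and, for any subsequence $(u_j)$ of a bounded sequence $(y_n)$ and any $y\in H$, $r(y,(u_j))\leqslant r(y,(y_n))$, so that $r((u_j))\leqslant r((y_n))$, i.e.\ the asymptotic radius does not increase under passage to subsequences.

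First I would introduce, for a bounded sequence $(y_n)$, the quantity $\Phi((y_n)):=\inf\{\,r((u_j)) : (u_j)\text{ is a subsequence of }(y_n)\,\}$. If $(w_m)$ is a subsequence of $(y_n)$, then every subsequence of $(w_m)$ is a subsequence of $(y_n)$, whence $\Phi((w_m))\geqslant\Phi((y_n))$: refining a sequence can only increase $\Phi$. Using this I would build nested subsequences $T_0\supseteq T_1\supseteq T_2\supseteq\cdots$ with $T_0=(x_n)$ by choosing, at each stage, a subsequence $T_j$ of $T_{j-1}$ with $r(T_j)\leqslant\Phi(T_{j-1})+1/j$. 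The numbers $\Phi(T_j)$ are nondecreasing and bounded above by $r(T_0)<\infty$, hence converge to some $\rho^*$, and the choice forces $r(T_j)\leqslant\rho^*+1/j$.

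Next I would pass to the Cantor diagonal subsequence $D$ of $(x_n)$, whose tail from the $j$-th term onward is a subsequence of $T_j$ for every $j$. Because limsups are unaffected by tails, $r(D)\leqslant r(T_{j})\leqslant\rho^*+1/j$ for all $j$, so $r(D)\leqslant\rho^*$; and because the tail of any subsequence $S$ of $D$ is a subsequence of each $T_j$, one gets $r(S)\geqslant\Phi(T_j)$ for all $j$, hence $r(S)\geqslant\rho^*$. Taking $S=D$ gives $r(D)=\rho^*$, and then for an arbitrary subsequence $S$ of $D$ the inequalities $\rho^*\leqslant r(S)\leqslant r(D)=\rho^*$ show $r(S)=\rho^*$. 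Thus $D$ is \emph{radius-stable}: every subsequence of $D$ has asymptotic radius $\rho^*$.

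Finally, let $x$ be the unique asymptotic center of $D$, so that $r(x,D)=r(D)=\rho^*$. For any subsequence $S$ of $D$, monotonicity of the radius gives $r(x,S)\leqslant r(x,D)=\rho^*$, while the defining property of the asymptotic radius gives $r(x,S)\geqslant r(S)=\rho^*$; hence $r(x,S)=r(S)$, and by uniqueness $x$ is the asymptotic center of $S$ as well. Since this holds for every subsequence $S$ of $D$, we conclude $D\overset{\Delta}\to x$, and therefore $D\overset{w}\to x$. The main obstacle is the third step: the infimum defining $\Phi$ need not be attained along any single subsequence, and it is precisely the nested construction together with the diagonalization that manufactures one subsequence realizing the stable radius $\rho^*$ \emph{simultaneously} for all of its subsequences. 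The remaining inputs — existence and uniqueness of asymptotic centers — rest on completeness and the uniform convexity encoded in \eqref{eq:quadratic}.
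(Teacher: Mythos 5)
Your proof is correct. Note, however, that the paper offers no argument of its own for this statement: it is imported wholesale as a citation of \cite[Proposition 3.1.2]{Bacak}, so there is no internal proof to compare against. What you have written is essentially the classical regularization argument (going back to Lim) that underlies the cited result: extract a subsequence $D$ that is \emph{regular} in the sense that every further subsequence has the same asymptotic radius $\rho^*$, then observe that the asymptotic center of $D$ must also be the asymptotic center of every subsequence, which is exactly $\Delta$-convergence; boundedness then converts $\Delta$-convergence into weak convergence via the equivalence recorded in \S 2.3. Each step checks out: the monotonicity $\Phi((w_m))\geqslant\Phi((y_n))$ under refinement, the bound $r(T_j)\leqslant\Phi(T_{j-1})+1/j\leqslant\rho^*+1/j$, the tail argument for the diagonal sequence, and the sandwich $\rho^*\leqslant r(S)\leqslant r(D)=\rho^*$ are all sound, and the final appeal to uniqueness of asymptotic centers rests on the strong convexity of $d(\cdot,z)^2$ from \eqref{eq:quadratic} together with completeness, both available in the paper. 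In short, you have supplied a complete, self-contained proof of a statement the paper only cites, and it is the standard one.
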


\begin{lemma}
	[\hspace{-0.03em}{\cite[Lemma 3.2.1]{Bacak}}]
	\label{l:Bacak2}
	Let $K\subseteq H$ be a closed convex set and $(x_n)_{n\in\mathbb{N}}\subset K$. If $x_n\overset{w}\to x$ then $x\in K$.
\end{lemma}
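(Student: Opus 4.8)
The plan is to argue by contradiction. Suppose $x\notin K$. Since $K$ is strongly closed and convex, the metric projection $p:=P_Kx$ exists, is unique, and $\delta:=d(x,p)>0$. Let $\gamma\in\Gamma_x(H)$ be the geodesic with $\gamma(0)=x$ and $\gamma(1)=p$. My aim is to exhibit this single geodesic as an obstruction to weak convergence: I will show that $P_\gamma x_n=p$ for every $n$, so that $d(x,P_\gamma x_n)=\delta$ for all $n$ and hence $P_\gamma x_n\nrightarrow x$. Since $x_n\overset{w}\to x$ forces $P_\eta x_n\to x$ along every $\eta\in\Gamma_x(H)$, this contradiction will yield $x\in K$.

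The heart of the argument is the claim that the nearest point of the segment $\gamma=[x,p]$ to $x_n$ is the endpoint $p$. First I would record the variational inequality satisfied by the projection: applying \eqref{eq:projection} with $C=K$ and the point $x$, and testing against $x_n\in K$, gives $d(x_n,p)^2+\delta^2\leqslant d(x,x_n)^2$. Interpreting this in the Euclidean comparison triangle $\Delta(\overline{x}\,\overline{p}\,\overline{x}_n)$ through the law of cosines shows that the comparison angle satisfies $\overline{\angle}_p(x,x_n)\geqslant\pi/2$. Next I would invoke the monotonicity of comparison angles in a $\CAT(0)$ space: along the segment $[x,p]$ the comparison angle $\overline{\angle}_p(x_t,x_n)$ at the point $x_t=(1-t)x\oplus tp$ is smallest at the far endpoint $x$, so from $\overline{\angle}_p(x,x_n)\geqslant\pi/2$ we obtain $\overline{\angle}_p(x_t,x_n)\geqslant\pi/2$ for every $t\in[0,1]$. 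Reading this obtuse-angle condition back through the law of cosines in $\Delta(\overline{x}_t\,\overline{p}\,\overline{x}_n)$ yields $d(x_t,x_n)^2\geqslant d(x_t,p)^2+d(p,x_n)^2\geqslant d(p,x_n)^2$, so $p$ minimizes the distance from $x_n$ over the whole segment. Since $\gamma$ is a closed convex set, its metric projection is unique, and we conclude $P_\gamma x_n=p$.

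With the claim in hand the proof closes immediately: $d(x,P_\gamma x_n)=d(x,p)=\delta>0$ for all $n$, which is incompatible with the requirement $P_\gamma x_n\to x$ entailed by $x_n\overset{w}\to x$. I expect the main obstacle to be precisely the lower bound $d(x_t,x_n)\geqslant d(p,x_n)$ along the entire segment $[x,p]$. It is tempting to try to extract this from the convexity of $t\mapsto d(x_t,x_n)^2$ together with the single endpoint estimate $d(x,x_n)\geqslant d(p,x_n)$, but a convex function can dip below its right endpoint value in the interior, so convexity alone is insufficient; it is the monotonicity of comparison angles (equivalently, the fact that $p=P_Kx$ forces the Alexandrov angle $\angle_p(x,x_n)\geqslant\pi/2$) that genuinely makes the distance function non-increasing along $[x,p]$. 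I note finally that boundedness of $(x_n)$ is never used, which is what makes this argument suited to the projection-based notion of weak convergence.
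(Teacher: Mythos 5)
Your overall strategy is sound, and the paper itself gives no proof of this lemma (it is imported from Ba\v cak's book); the geometric fact you need, namely $P_\gamma x_n=p$ for every $x_n\in K$ where $\gamma=[x,p]$ and $p=P_Kx$, is exactly what the paper establishes inside the proof of Theorem \ref{wtopcvx}, there by applying inequality \eqref{eq:projection} twice (once for the projection onto $K$, once for the projection onto $\gamma$) and combining, which forces $d(P_\gamma x_n,p)=0$ without any mention of angles. However, your justification of the key step contains a genuine error: the monotonicity of comparison angles in a $\CAT(0)$ space goes the opposite way from what you assert. For geodesics issuing from $p$, the comparison angle $\overline{\angle}_p(\gamma(s),\eta(t))$ is \emph{non-decreasing} in $s$ and $t$, so along $[p,x]$ the angle $\overline{\angle}_p(x_t,x_n)$ is \emph{largest} at the far endpoint $x$ and decreases towards the Alexandrov angle as $x_t\to p$. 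Consequently the single estimate $\overline{\angle}_p(x,x_n)\geqslant\pi/2$, which you correctly extract from \eqref{eq:projection} via the law of cosines, gives no information about $\overline{\angle}_p(x_t,x_n)$ for interior $x_t$, and the step ``$\overline{\angle}_p(x_t,x_n)\geqslant\pi/2$ for every $t$'' does not follow as written.

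The fix is the one you mention only parenthetically: since $p=P_Kx$ and $x_n\in K$ with $K$ convex, the \emph{Alexandrov} angle satisfies $\angle_p([p,x],[p,x_n])\geqslant\pi/2$ (the obtuse-angle characterization of projections, \cite[Theorem 2.1.12]{Bacak}, already invoked in the paper's proof of Lemma \ref{l:dual}); since the Alexandrov angle is the infimum of the comparison angles, every $\overline{\angle}_p(x_t,x_n)$ is then $\geqslant\pi/2$, and your law-of-cosines estimate $d(x_t,x_n)^2\geqslant d(x_t,p)^2+d(p,x_n)^2$ goes through, with strict inequality for $x_t\neq p$, so $P_\gamma x_n=p$. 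Note that this Alexandrov-angle bound comes from the projection property itself, not from the comparison-angle inequality at the endpoint $x$; the two are not interchangeable as your parenthesis suggests. With that single repair the proof is correct, and, as you observe, it never uses boundedness of $(x_n)$ --- which is consistent with the paper's projection-based definition of weak convergence.
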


Lemmas \ref{l:Bacak} and \ref{l:Bacak2} immediately imply the following result:

 \begin{theorem}
 \label{wseqcompact}
 Any bounded closed convex set $K$ in a Hadamard space is weakly sequentially compact and therefore $\tau_w$-sequentially compact.
 \end{theorem}

\begin{theorem}
\label{lemma:seqcomp}
Let $(H,d)$ be separable. Then any weakly sequentially compact set $K \subset H$ is weakly compact. 
\end{theorem}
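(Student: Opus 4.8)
The plan is to reduce the statement to the classical topological fact that a Lindel\"of, countably compact space is compact. I would equip $K$ with the subspace topology inherited from $\tau_w$ and verify two things separately: that this topology is Lindel\"of, and that $K$ is countably compact in it.

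First I would establish the Lindel\"of property by exploiting that $\tau_w$ is \emph{coarser} than the metric topology $\tau_d$. Indeed, Lemma \ref{l:lemma1} shows every elementary set is metrically open, and since a weakly open set $U$ satisfies, for each of its points $x$, an inclusion $U_x(\varepsilon;\gamma_1,\dots,\gamma_n)=\bigcap_{i}U_x(\varepsilon;\gamma_i)\subseteq U$ with each factor metrically open, it follows that $U$ is metrically open; hence $\tau_w\subseteq\tau_d$. Because $(H,d)$ is separable it is second countable in $\tau_d$, and therefore so is the metric subspace $(K,\tau_d|_K)$; in particular $(K,\tau_d|_K)$ is Lindel\"of. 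The key observation is that the Lindel\"of property passes to coarser topologies: any cover of $K$ by $\tau_w|_K$-open sets is also a cover by $\tau_d|_K$-open sets, so it admits a countable subcover consisting of the same sets. Thus $(K,\tau_w|_K)$ is Lindel\"of. This step is precisely what lets me avoid constructing a countable base for $\tau_w$ directly, which would be awkward since the family of geodesics emanating from each point is uncountable.

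Second I would show $K$ is countably compact in $\tau_w|_K$. Given a sequence in $K$, weak sequential compactness yields a subsequence with $x_{n_k}\overset{w}\to x$ and $x\in K$; by the unconditional direction of Theorem \ref{wtopBacak} this gives $x_{n_k}\overset{\tau_w}\to x$, so $x$ is a cluster point of the original sequence in $(K,\tau_w|_K)$. Hence every sequence in $K$ has a cluster point in $K$, which is equivalent to countable compactness: if a countable $\tau_w|_K$-open cover $\{U_n\}$ had no finite subcover, choosing $x_n\in K\setminus(U_1\cup\dots\cup U_n)$ would produce a sequence whose cluster point lies in some $U_m$, yet $x_n\notin U_m$ for all $n\geqslant m$, a contradiction.

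Finally, combining the two: any $\tau_w$-open cover of $K$ has, by the Lindel\"of property, a countable subcover, which by countable compactness has a finite subcover; hence $K$ is weakly compact. I would emphasize that this argument invokes only the direction $x_n\overset{w}\to x\Rightarrow x_n\overset{\tau_w}\to x$ of Theorem \ref{wtopBacak}, so weak properness is not required. The main obstacle here is conceptual rather than computational: realizing that separability should be used \emph{through} the metric topology by coarsening, rather than through an explicit description of $\tau_w$, and being careful that ``weakly sequentially compact'' supplies limits inside $K$, so that genuine cluster points, and hence countable compactness of the subspace, are obtained.
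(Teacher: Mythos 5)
Your proof is correct and follows essentially the same route as the paper: both obtain a countable subcover from separability by observing that $\tau_w$ is coarser than the metric topology (so the Lindel\"of property transfers to $\tau_w$-covers), and then use weak sequential compactness to rule out a countable cover with no finite subcover. Your packaging of the second step as countable compactness via cluster points is merely a cleaner organization of the paper's contradiction argument with the nested weakly closed sets $W_n$.
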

\begin{proof}
The proof proceeds by contradiction. Suppose that $K\subseteq H$ is weakly sequentially compact but not weakly compact. Then there exists some open cover $\{U_i\}_{i\in I}$ of $K$ in $\tau_w$  that has no finite subcover. By assumption $(H,d)$ is separable. Hence $(H,d)$ is a Lindel\"of space, i.e., every open cover (in the strong topology) has a countable subcover; see, e.g., \cite[Theorem 6.7]{Bert}. Since
$\{U_i\}_{i\in I}$ is an open cover in the usual metric topology, there exists a countable subcover $\{U_j\}_{j\in J}$. 
Let  $V_n:=\bigcup_{j=1}^nU_{j}$. 
Then $W_n:=H\setminus V_n$ is weakly closed for all $n$. Moreover, the family of sets $W_n$ satisfies $W_{n+1}\subseteq W_n$.  
Because $V_n$ cannot cover $K$, we have that $W_n\cap K$ is nonempty for every $n\in\mathbb{N}$. Let $x_n\in W_n\cap K$. Since $K$ is weakly sequentially compact, and consequently $\tau_w$-sequentially compact, the sequence $(x_n)$ has a subsequence $(x_{n_k})$ that converges in $\tau_w$ to some element $x^*\in K$.
Let $\mathcal{U}_w(x^*)$ denote the collection of weakly open sets containing $x^*$.
Then for each $U\in \mathcal{U}_w(x^*)$ and for each $n\in \mathbb{N}$ there exists $m\geqslant n$ such that $U\cap W_m\neq\emptyset$, and in particular $U\cap W_n\neq\emptyset$, implying that $x^*\in\wcl W_n=W_n$. Since $n$ is arbitrary, we have that $x^*\in\bigcap_nW_n$. Hence $x^*\in \bigcap_nW_n\cap K$.
Therefore, $\bigcap_{n\in\mathbb{N}}W_n\cap K\neq\emptyset$, which together with $\bigcap_{n\in\mathbb{N}}W_n\cap K\subsetneq K$,
yields that $K\supsetneq K\setminus( \bigcap_{n\in\mathbb{N}}W_n\cap K)=K\setminus\bigcap_{n\in\mathbb{N}}(K\setminus V_n)=\bigcup_{n\in\mathbb{N}}(K\cap V_n)=K$. 
\end{proof}

\begin{remark}
Notice that the previous proof also applies to the more general setting of a topology that is weaker than the metric topology in any separable metric space.
\end{remark}

\begin{proposition}
	\label{p:compact-seqcompact}
	Let $(H,d)$ be a Hadamard space that is Hausdorff with respect to $\tau_w$ and let $K\subseteq H$ be bounded. If $K$ weakly compact then $K$ is weakly sequentially compact.
\end{proposition}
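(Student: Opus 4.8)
The plan is to combine the extraction of a weakly convergent subsequence (guaranteed by boundedness through Lemma \ref{l:Bacak}) with the elementary topological fact that a compact subset of a Hausdorff space is closed, so that the weak limit is forced to lie in $K$.

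First I would fix an arbitrary sequence $(x_n)\subseteq K$. Since $K$ is bounded, $(x_n)$ is a bounded sequence, so Lemma \ref{l:Bacak} supplies a subsequence $(x_{n_k})$ and a point $x^*\in H$ with $x_{n_k}\overset{w}\to x^*$. By the forward implication of Theorem \ref{wtopBacak}, which holds unconditionally, this also gives $x_{n_k}\overset{\tau_w}\to x^*$. It then remains only to show that $x^*\in K$, for in that case $(x_n)$ has a subsequence weakly converging to a point of $K$, which is exactly the weak sequential compactness of $K$.

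Next I would establish that $K$ is weakly closed, and this is where both nontrivial hypotheses enter. Given that $K$ is weakly compact and $(H,\tau_w)$ is Hausdorff, one runs the standard argument: for a fixed $y\in H\setminus K$ and each $x\in K$ the Hausdorff property provides disjoint weakly open sets $U_x\ni x$ and $V_x\ni y$; the family $\{U_x\}_{x\in K}$ is a cover of $K$ by weakly open sets, and a finite subcover $U_{x_1},\dots,U_{x_m}$ yields the weakly open neighborhood $\bigcap_{i=1}^m V_{x_i}$ of $y$, which is disjoint from $\bigcup_{i=1}^m U_{x_i}\supseteq K$. Hence $H\setminus K$ is weakly open and $K$ is weakly closed. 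Now, since $x_{n_k}\overset{\tau_w}\to x^*$ with every term in the weakly closed set $K$, each weakly open neighborhood of $x^*$ contains a tail of the subsequence and therefore meets $K$; thus $x^*$ belongs to the weak closure of $K$, which equals $K$. This gives $x^*\in K$ and completes the argument.

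The step I expect to require the most care — and where both nontrivial hypotheses are consumed — is forcing the weak limit $x^*$ to lie in $K$. Boundedness alone, via Lemma \ref{l:Bacak}, only locates $x^*$ somewhere in $H$; it is weak compactness together with the Hausdorff property that, through the compact-implies-closed step above, pins $x^*$ down inside $K$. Beyond this, the only details meriting attention are that the implication $\overset{w}\to\ \Rightarrow\ \overset{\tau_w}\to$ from Theorem \ref{wtopBacak} needs no weak properness in this direction, and that it is the boundedness of $K$, rather than its closedness, that feeds Lemma \ref{l:Bacak}.
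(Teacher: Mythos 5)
Your proof is correct and follows essentially the same route as the paper: extract a weakly convergent subsequence via Lemma \ref{l:Bacak}, pass to $\tau_w$-convergence by the unconditional direction of Theorem \ref{wtopBacak}, and use that a weakly compact subset of a $\tau_w$-Hausdorff space is weakly closed (hence sequentially closed) to conclude the limit lies in $K$. The only difference is that you spell out the compact-implies-closed argument that the paper merely invokes.
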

\begin{proof}
	Let $(x_n)_{n\in\mathbb N}\subseteq K$, then $(x_n)$ is bounded. By Lemma \ref{l:Bacak} there is a subsequence $(x_{n_k})\overset{w}\to x$ for some $x\in H$. By Theorem \ref{wtopBacak} we have $x_{n_k}\overset{\tau_w}\to x$.
	Assumption $\tau_w$ is Hausdorff  implies that $K$ is weakly closed, consequently $\tau_w$-sequentially closed, hence $x\in K$. 
\end{proof}

As an immediate consequence of Proposition \ref{lemma:seqcomp} and Proposition \ref{p:compact-seqcompact} we obtain:

\begin{corollary}
 \label{c:EberleinSmulyan}
 Let $(H,d)$ be separable and Hausdorff with respect to $\tau_w$. If $K\subseteq H$ is bounded then $K$ is weakly compact if and only if $K$ is weakly sequentially compact.
 \end{corollary}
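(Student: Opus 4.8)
The plan is to prove the equivalence by treating its two implications separately, each of which follows directly from one of the two results immediately preceding the statement, once I check that the hypotheses of those results are all supplied by the hypotheses of the corollary. Since the statement is advertised as an \emph{immediate consequence}, the entire content of the proof is the bookkeeping of which assumption feeds which direction.

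First I would handle the direction that weak compactness implies weak sequential compactness. Here I would simply invoke Proposition \ref{p:compact-seqcompact}. That proposition has exactly two standing requirements: that $(H,d)$ be Hausdorff with respect to $\tau_w$, and that the set under consideration be bounded. Both are among the hypotheses of the corollary, so if $K$ is a bounded, weakly compact set then Proposition \ref{p:compact-seqcompact} applies verbatim and yields that $K$ is weakly sequentially compact. Note that separability plays no role in this implication.

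Next I would handle the converse, that weak sequential compactness implies weak compactness. For this I would appeal to Theorem \ref{lemma:seqcomp}, whose sole hypothesis is separability of $(H,d)$. Since separability is assumed in the corollary, any weakly sequentially compact $K$ is weakly compact; in fact this direction does not even require boundedness of $K$ or the Hausdorff property, so it holds a fortiori under the corollary's stronger assumptions. Combining the two implications gives the stated equivalence on bounded sets.

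I do not expect any genuine obstacle here: unlike the Hilbert-space Eberlein--\v Smulian theorem, whose difficulty lies in extracting a weakly convergent subsequence and in relating sequential and topological compactness, both of those hard steps have already been discharged in Theorem \ref{lemma:seqcomp} and Proposition \ref{p:compact-seqcompact} (which themselves rest on Lemma \ref{l:Bacak}, the Lindel\"of property coming from separability, and Theorem \ref{wtopBacak}). The only thing to verify is the alignment of hypotheses, namely that separability, Hausdorffness in $\tau_w$, and boundedness of $K$ together cover the premises of both component results; this is immediate, so the proof is a one-line citation of each direction.
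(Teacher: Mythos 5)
Your proposal is correct and matches the paper exactly: the paper derives this corollary as an immediate consequence of Theorem \ref{lemma:seqcomp} (separability gives weak sequential compactness $\Rightarrow$ weak compactness) and Proposition \ref{p:compact-seqcompact} (Hausdorffness of $\tau_w$ plus boundedness gives the converse). Your allocation of hypotheses to each direction is precisely the intended argument.
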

 
 \begin{corollary}
 	Closed bounded and convex sets in a separable Hadamard space are weakly compact.
 \end{corollary}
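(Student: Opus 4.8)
The plan is to combine two results already established in the preceding pages, so that the corollary follows immediately with no further analysis. Let $K \subseteq H$ be closed, bounded, and convex, and assume $(H,d)$ is separable. First I would invoke Theorem \ref{wseqcompact}, which asserts that any bounded closed convex set in a Hadamard space is weakly sequentially compact (and hence $\tau_w$-sequentially compact). This step does not use separability at all: it rests on Lemma \ref{l:Bacak}, which extracts a weakly convergent subsequence from any bounded sequence, together with Lemma \ref{l:Bacak2}, which guarantees that the weak limit of a sequence in a closed convex set stays in that set.

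Having established that $K$ is weakly sequentially compact, I would then apply Theorem \ref{lemma:seqcomp}, which states precisely that in a separable Hadamard space every weakly sequentially compact set is weakly compact. Chaining the two implications yields that $K$ is weakly compact, which is the assertion. In symbols, the argument is
\[
K \text{ bounded, closed, convex} \;\overset{\text{Thm \ref{wseqcompact}}}{\Longrightarrow}\; K \text{ weakly seq.\ compact} \;\overset{\text{Thm \ref{lemma:seqcomp}}}{\Longrightarrow}\; K \text{ weakly compact}.
\]

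There is essentially no obstacle at this stage: the genuine difficulty has already been absorbed into the two cited theorems. If I wanted to flag where the effort truly lies, it is in Theorem \ref{lemma:seqcomp}, whose proof exploits separability via the Lindel\"of property to pass from an arbitrary open cover in $\tau_w$ to a countable one, and then uses $\tau_w$-sequential compactness to contradict the absence of a finite subcover. The only point I would double-check is that the word ``closed'' in the hypothesis is read as \emph{strongly} closed, which is exactly the setting of Theorem \ref{wseqcompact}; no appeal to weak closedness or to the Hausdorff property of $\tau_w$ is needed here, which is why separability alone (and not weak properness) suffices.
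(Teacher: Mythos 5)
Your argument is correct and is precisely the chain the paper intends: the corollary is stated without proof immediately after Theorem \ref{lemma:seqcomp}, and the evident route is Theorem \ref{wseqcompact} (bounded closed convex $\Rightarrow$ weakly sequentially compact) followed by Theorem \ref{lemma:seqcomp} (separability upgrades weak sequential compactness to weak compactness). Your remark that only strong closedness and separability are needed, with no appeal to weak properness or the Hausdorff property, is also accurate.
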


\begin{question}
	\label{q:unbounded}
	Is a weakly compact unbounded set $K\subseteq H$ also weakly sequentially compact?
\end{question}
\begin{example}
	This question is motivated by the example of the simplicial tree given by Monod \cite{Monod}. This tree consists of countably many rays of finite but ever increasing length, all meeting at one vertex. If $(V_{i})_{i\in I}$ is some open cover in $\tau_w$ for the simplicial tree then there exists some $i\in I$ such that $V_i$ contains an elementary set that has the common vertex. By construction this elementary set covers most of the space except for a certain ray which can be covered by at most a finite number of elementary sets. Therefore we can always find a finite subcover from our original cover $(V_{i})_{i\in I}$ i.e. the simplicial tree is weakly compact. 
	Moreover this space provides also an example of an unbounded weakly convergent sequence, see discussion in \S 3.1.1 \cite{BerdellimaPhD}.
\end{example}

However we can provide a conditional answer to Question \ref{q:unbounded}.
First we need a regularity assumption.  Let $(H,d)$ be a separable Hadamard space and
 $\{y_n\}_{n\in\mathbb N}\subset H$ a dense set in $H$. We say $(H,d)$ has the nice geodesic structure, if for any $x\in H$ and any sequence $(x_k)_{k\in\mathbb N}$ such that $\lim_kP_{[x,y_n]}x_k=x$ for all $n\in\mathbb N$ implies $\lim_kP_{\gamma}x_k=x$ for any $\gamma\in \Gamma_x(H)$. 

\begin{theorem}
	\label{thm:wcompact-wseqcompact}
	Let $(H,d)$ be a separable weakly proper Hadamard space that has the nice geodesics structure. Then a weakly compact set is weakly sequentially compact. 
\end{theorem}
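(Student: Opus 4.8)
Here is the proof plan I would follow.

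The plan is to combine the one topological consequence of weak compactness that is available in any topological space — the existence of a cluster point of an arbitrary sequence — with the countable nature of the hypotheses (separability together with the nice geodesic structure) in order to promote that cluster point to a genuine weak limit of a subsequence. The reason the bounded argument of Proposition \ref{p:compact-seqcompact} does not transfer verbatim is that $\tau_w$ need not be first countable, so weak compactness does \emph{not} hand us a convergent subsequence directly; it only produces cluster points, and the whole difficulty is bridging that gap.

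First I would extract a cluster point. Let $(x_n)\subseteq K$. Working inside the compact subspace $(K,\tau_w)$, the $\tau_w$-closures in $K$ of the tails $\{x_j: j\geqslant n\}$ form a decreasing chain of nonempty weakly closed subsets of the compact space $K$, so by the finite intersection characterization of compactness their intersection contains a point $x^*\in K$. By construction $x^*$ is a cluster point of $(x_n)$: every weakly open $U\ni x^*$ contains $x_n$ for infinitely many $n$.

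Next I would use separability and weak properness to convert this into control of projections along countably many geodesics. Fix a dense set $\{y_m\}_{m\in\mathbb N}$ and put $\gamma_m:=[x^*,y_m]\in\Gamma_{x^*}(H)$. For each $m$ consider $W_m:=U_{x^*}(1/m;\gamma_1,\dots,\gamma_m)=\{z: d(x^*,P_{\gamma_j}z)<1/m\ \forall\, j\leqslant m\}$. By Definition \ref{d:weakly-proper}, for each $j\leqslant m$ there is a weakly open $V_j$ with $x^*\in V_j\subseteq U_{x^*}(1/m;\gamma_j)$, so $\bigcap_{j\leqslant m}V_j$ is a weakly open neighborhood of $x^*$ contained in $W_m$. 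Since $x^*$ is a cluster point, this neighborhood, and hence $W_m$, contains $x_n$ for infinitely many $n$.

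Finally, a diagonal extraction followed by the nice geodesic structure completes the argument. Choosing $n_1<n_2<\cdots$ with $x_{n_k}\in W_k$ (possible because each $W_k$ contains $x_n$ for infinitely many $n$), the membership $x_{n_k}\in W_k$ gives $d(x^*,P_{\gamma_j}x_{n_k})<1/k$ for all $j\leqslant k$; hence for each fixed $m$ and all $k\geqslant m$ we have $d(x^*,P_{\gamma_m}x_{n_k})<1/k\to 0$, i.e. $P_{[x^*,y_m]}x_{n_k}\to x^*$ for every $m$. The nice geodesic structure then upgrades this to $P_{\gamma}x_{n_k}\to x^*$ for every $\gamma\in\Gamma_{x^*}(H)$, that is $x_{n_k}\overset{w}\to x^*$ with $x^*\in K$, exhibiting the required weakly convergent subsequence. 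I expect the only delicate step to be the passage from the purely topological cluster point furnished by compactness to actual projection control, which is exactly where weak properness and separability enter; the nice geodesic structure is then the precise hypothesis that closes the remaining gap between ``countably many geodesics to a dense set'' and ``all geodesics.''
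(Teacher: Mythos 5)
Your proposal is correct and follows essentially the same route as the paper's proof: extract a weak cluster point $x^*\in K$ from compactness, use separability to build the nested neighborhoods $U_{x^*}(1/m;\gamma_1,\dots,\gamma_m)$ along geodesics to a countable dense set, invoke weak properness (Definition \ref{d:weakly-proper}) to place genuine $\tau_w$-open neighborhoods inside them so the cluster point yields a subsequence, and finish with the nice geodesic structure. The only difference is presentational: you derive the cluster point directly from the finite intersection property, whereas the paper cites auxiliary results from \cite{BerdellimaPhD}, and your diagonal extraction with explicitly increasing indices $n_1<n_2<\cdots$ is slightly more careful than the paper's ``passing to a subsequence if necessary.''
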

\begin{proof}
	Let $(H,d)$ be weakly proper and $K\subseteq H$ a weakly compact set. 
	Let $(x_k)_{k\in\mathbb{N}}$ be a sequence in $K$. Then by virtue of \cite[Theorem 3.15]{BerdellimaPhD} $(x_k)_{k\in\mathbb{N}}$ has a weak accumulation point $x\in K$, hence by \cite[Proposition 3.14]{BerdellimaPhD} $x$ is also a weak limit point.
	Since $H$ is separable there exits a dense countable set $\{y_n\}$ in $H$. Let $\gamma_n:[0,1]\to H$ denote the geodesic connecting $x$ with $y_n$ for every $n\in\mathbb{N}$. Now consider the family of sets $V_n$ defined as 
	\begin{equation}
	\label{eq:opensets1}
	V_n:=\bigcap_{i=1}^nU_x(1/n;\gamma_i)\quad\text{where}\quad U_x(1/n;\gamma_i):=\{y\in H\,|\,d(x,P_{\gamma_i}y)<1/n\}.
	\end{equation}
	Since $(H,d)$ is weakly proper then there are $U_{n,i}\in\tau_w$ containing $x$ such that $U_{n,i}\subseteq  U_x(1/n;\gamma_i)$ for every $i=1,2,...,n$ and for every $n\in\mathbb{N}$. Denote by $U_n:=\bigcap_{i=1}^nU_{n,i}$ then $U_n\subseteq V_n$ for all $n\in\mathbb{N}$. 
	Since $U_n$ is a weakly open set containing $x$, it has at least one element $x_{k(n)}$. Passing to a subsequence if necessary we may say that $x_n \in U_n$ for all $n$. In particular $x_n\in V_n$ for all $n$.
	The sets $V_n$ are nested, so we have that $x_m \in V_n$ for all $m \geqslant n$, i.e. $\lim_mP_{\gamma_i}x_m=x$ for all $i=1,2,...,n$ and so $\lim_mP_{\gamma_n}x_m=x$ for all $n\in\mathbb{N}$. Because $(H,d)$ enjoys the nice geodesics structure then $\lim_mP_{\gamma}x_m=x$ for all $\gamma\in\Gamma_x(H)$.	
\end{proof}

As a direc consequence of Theorem \ref{lemma:seqcomp} and Theorem \ref{thm:wcompact-wseqcompact} we obtain: 
\begin{corollary}
	In a separable and weakly proper Hadamard space that enjoys the nice geodesics structure weak compactness and weak sequential compactness coincide.
\end{corollary}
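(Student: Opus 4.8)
The plan is to prove the biconditional by splitting it into its two implications, each of which is already available under the stated hypotheses. Since the assertion that weak compactness and weak sequential compactness \emph{coincide} means precisely that a subset $K\subseteq H$ is weakly compact if and only if it is weakly sequentially compact, I would verify the two directions in turn and then observe that the corollary's standing assumptions---separability, weak properness, and the nice geodesic structure---are strong enough to invoke the relevant theorem in each case.

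First I would dispatch the implication ``weakly sequentially compact $\Rightarrow$ weakly compact.'' This is exactly the content of Theorem \ref{lemma:seqcomp}, whose sole hypothesis is separability of $(H,d)$. As separability is assumed in the corollary, this direction follows immediately with no additional argument. Next I would treat the converse implication ``weakly compact $\Rightarrow$ weakly sequentially compact,'' which is precisely Theorem \ref{thm:wcompact-wseqcompact}. Its hypotheses are that $(H,d)$ be separable, weakly proper, and enjoy the nice geodesic structure; these are exactly the three standing assumptions of the corollary, so the implication again holds directly. Combining the two implications yields the equivalence for every subset of such a space.

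Since the result is a juxtaposition of two previously established theorems, I do not anticipate any genuine mathematical obstacle. The only point meriting attention is the bookkeeping of hypotheses: one must check that the corollary's assumptions suffice to apply \emph{both} theorems simultaneously. This is immediate once one notes that Theorem \ref{thm:wcompact-wseqcompact} carries the more demanding set of hypotheses---all of which are assumed here---while Theorem \ref{lemma:seqcomp} requires only the weaker condition of separability, itself a consequence of what we assume. Consequently no gap can arise, and the proof reduces to citing the two theorems under their common hypotheses.
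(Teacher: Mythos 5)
Your proposal is correct and matches the paper exactly: the corollary is stated as a direct consequence of Theorem \ref{lemma:seqcomp} (separability gives weakly sequentially compact $\Rightarrow$ weakly compact) and Theorem \ref{thm:wcompact-wseqcompact} (separability, weak properness, and the nice geodesic structure give the converse). Nothing further is needed.
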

In relation to the above observations we ask the following:
\begin{question}
	Does every separable Hadamard space enjoy the nice geodesics structure?
\end{question}

\subsection{Locally compact spaces}
\label{s:loc-compact}
We now turn to locally compact Hadamard spaces. 
Recall that a topological space $(X,\tau)$ is said to be locally compact if for every $x\in X$ there exists an open set $U\in \tau$ and a compact set $K$ such that $x\in U\subseteq K$. Let $(\mathcal A, \geq)$ be a directed set and $(x_{\alpha})_{\alpha\in\mathcal A}$ a net in $H$ then 
$x_{\alpha}\overset{w}\to x$ if and only if for every $\gamma\in\Gamma_x(H)$ and every $\varepsilon>0$ there is $\alpha_{\varepsilon}\in\mathcal A$ such that $d(x,P_{\gamma}x_{\alpha})<\varepsilon$ for all $\alpha\geq\alpha_{\varepsilon}$ i.e. $x_{\alpha}$ is {\it eventually} in $U_x(\varepsilon;\gamma)$ for every $\gamma\in\Gamma_x(H)$ and every $\varepsilon>0$.

\begin{theorem}\cite[Proposition 4.3]{Kakavandi}\label{theorem:HopfRinow}
The followings statements are equivalent for a Hadamard space $(H,d)$:
\begin{itemize}
\item $(H,d)$ is locally compact.
\item Every closed and bounded subset of $(H,d)$ is compact.
\item Every bounded sequence in $(H,d)$ has a convergent subsequence.
\end{itemize}
\end{theorem}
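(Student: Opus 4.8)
The plan is to separate the three statements into the routine metric-space equivalences and the one genuinely geometric implication, proving the latter by the Hopf--Rinow--Cohn--Vossen argument. First I would dispose of the equivalence of the second and third statements, for which I only need that compactness and sequential compactness coincide in a metric space. If every closed bounded set is compact and $(x_n)$ is bounded, then $(x_n)$ lies in some closed ball $\overline{B}(x_0,R)$, which is compact, so $(x_n)$ subconverges; conversely, if every bounded sequence subconverges and $C$ is closed and bounded, then any sequence in $C$ has a subsequential limit, which lies in $C$ by closedness, so $C$ is sequentially compact and hence compact. The implication from the second statement to local compactness is then immediate, since $\overline{B}(x,1)$ is a compact set containing the neighbourhood $B(x,1)$ of $x$.

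The substance is the implication that local compactness forces every closed bounded set to be compact, and this is where completeness and the geodesic structure of the Hadamard space enter. I would fix a base point $x_0$ and set $R^{\ast}:=\sup\{R\geqslant 0:\overline{B}(x_0,R)\text{ is compact}\}$, which is positive by local compactness. Assuming $R^{\ast}<\infty$, I would derive a contradiction in two steps. In the first step I show $\overline{B}(x_0,R^{\ast})$ is itself compact: since $H$ is complete it suffices to check total boundedness, so for small $\varepsilon>0$ I take a finite $\varepsilon/2$-net $F$ of the compact ball $\overline{B}(x_0,R^{\ast}-\varepsilon/2)$, and for arbitrary $y\in\overline{B}(x_0,R^{\ast})$ I slide along the unique geodesic from $x_0$ to $y$ to the point $y'$ at distance $\min\{d(x_0,y),R^{\ast}-\varepsilon/2\}$ from $x_0$. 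Then $d(y,y')\leqslant\varepsilon/2$ and $y'\in\overline{B}(x_0,R^{\ast}-\varepsilon/2)$, so $F$ is an $\varepsilon$-net; this truncation along geodesics is exactly where the geodesic structure is indispensable.

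In the second step I reopen the supremum. By local compactness each $p\in\overline{B}(x_0,R^{\ast})$ has some $r_p>0$ with $\overline{B}(p,r_p)$ compact, and by compactness of $\overline{B}(x_0,R^{\ast})$ finitely many balls $B(p_i,r_{p_i}/2)$ cover it. Setting $\delta:=\min_i r_{p_i}/2>0$, any $y$ with $d(x_0,y)\leqslant R^{\ast}+\delta$ has its geodesic truncation at radius $R^{\ast}$ landing in some $B(p_i,r_{p_i}/2)$, whence $d(p_i,y)<r_{p_i}$ and $y\in\overline{B}(p_i,r_{p_i})$. Thus $\overline{B}(x_0,R^{\ast}+\delta)$ is a closed subset of a finite union of compact sets, hence compact, contradicting the definition of $R^{\ast}$. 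Therefore $R^{\ast}=\infty$, every closed ball about $x_0$ is compact, and any closed bounded set, being a closed subset of such a ball, is compact.

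The hard part will be precisely this last implication, and within it the two delicate points are realizing the supremal radius and then extending past it. Both rest on the ability to join $x_0$ to an arbitrary point by a geodesic and to project an outer point inward along it, a feature guaranteed here because $H$ is a (uniquely geodesic, complete) Hadamard space; completeness is what upgrades total boundedness to compactness in the first step, while local compactness supplies the finite subcover in the second. The remaining equivalences require no geometry beyond the standard identification of compactness with sequential compactness in metric spaces.
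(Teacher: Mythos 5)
Your proposal is correct, but note that the paper does not prove this statement at all: it is imported verbatim as a citation of Kakavandi's Proposition~4.3 (itself a form of the Hopf--Rinow--Cohn--Vossen theorem), so there is no in-paper argument to compare against. What you supply is the standard self-contained proof, and it holds up: the equivalence of the second and third items is the usual identification of compactness with sequential compactness in metric spaces; the implication from the second item to local compactness is trivial via $\overline{B}(x,1)$; and the substantive implication uses the supremal compact radius $R^{\ast}$, with completeness upgrading the total boundedness of $\overline{B}(x_0,R^{\ast})$ (obtained by truncating geodesics into the compact ball $\overline{B}(x_0,R^{\ast}-\varepsilon/2)$, which is compact because the set of good radii is downward closed) and a Lebesgue-number-type covering argument pushing compactness out to radius $R^{\ast}+\delta$, forcing $R^{\ast}=\infty$. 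The geodesic truncation step is exactly where the hypothesis that $H$ is geodesic is used, and your bookkeeping ($d(y,y')\leqslant\varepsilon/2$, $d(p_i,y)<r_{p_i}$) is accurate. The only thing your write-up leaves implicit is the one-line justification that $\overline{B}(x_0,R)$ is compact for every $R<R^{\ast}$ (a closed subset of a compact ball of slightly larger admissible radius), which you use when forming the $\varepsilon/2$-net; spelling that out would make the argument airtight.
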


\begin{lemma}
\label{l:bounded}
 Let $(H,d)$ be locally compact Hadamard space and $(x_{\alpha})_{\alpha\in\mathcal A}$ a net in $H$. If $x_{\alpha}\overset{w}\to x$, then $(x_{\alpha})$ is bounded.
\end{lemma}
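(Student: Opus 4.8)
The plan is to argue by contradiction and exploit local compactness in order to produce a single fixed geodesic along which the projections cannot converge to $x$. Suppose the net $(x_\alpha)$ is not (eventually) bounded. Then, passing to a subnet — which again converges weakly to $x$, since weak convergence is inherited by subnets — I may assume that $D_\beta:=d(x,x_\beta)\to+\infty$. For every $\beta$ with $D_\beta\geqslant 1$ let $z_\beta\in[x,x_\beta]$ be the point with $d(x,z_\beta)=1$, i.e. $z_\beta=(1-1/D_\beta)\,x\oplus(1/D_\beta)\,x_\beta$. The points $z_\beta$ all lie on the sphere of radius $1$ about $x$, a closed subset of the closed ball $\overline{B}(x,1)$; by Theorem \ref{theorem:HopfRinow} this ball is compact, so after passing to a further subnet I may assume $z_\beta\to z$ with $d(x,z)=1$. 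Fix once and for all the geodesic $\gamma\in\Gamma_x(H)$ with $\gamma(1)=z$ (so that $d(x,\gamma(s))=s$ for $s\in[0,1]$); this is the test direction along which weak convergence will be contradicted.

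The heart of the matter is to show that the projections $P_\gamma x_\beta$ tend to the far endpoint $z$ rather than to $x$. Write $w_\beta:=P_\gamma x_\beta=\gamma(s_\beta)$, so that $d(x,P_\gamma x_\beta)=s_\beta\in[0,1]$. Two facts drive the estimate. First, the Alexandrov angle $\theta_\beta:=\angle_x([x,x_\beta],\gamma)$ equals $\angle_x([x,z_\beta],[x,z])$, since $z_\beta$ and $z$ determine the same geodesic directions at $x$ as $x_\beta$ and $\gamma$; because $d(x,z_\beta)=d(x,z)=1$ and $d(z_\beta,z)\to 0$, the comparison angles $\overline{\angle}_x(z_\beta,z)\to 0$, whence $\theta_\beta\to 0$. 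Second, using that in a $\CAT(0)$ space the comparison angle dominates the Alexandrov angle (monotonicity of comparison angles, see \cite{Brid}), the Euclidean law of cosines for the comparison triangle of $\triangle(x,x_\beta,\gamma(s))$ gives the lower bound
\begin{equation*}
d(x_\beta,\gamma(s))^2\geqslant D_\beta^2+s^2-2D_\beta s\cos\theta_\beta,\qquad s\in[0,1].
\end{equation*}
On the other hand, since $w_\beta$ minimises $d(x_\beta,\cdot)$ over $\gamma$ and $d(x_\beta,z)\leqslant d(x_\beta,z_\beta)+d(z_\beta,z)=(D_\beta-1)+\delta_\beta$ with $\delta_\beta:=d(z_\beta,z)\to 0$, I obtain $d(x_\beta,w_\beta)^2\leqslant d(x_\beta,z)^2\leqslant(D_\beta-1+\delta_\beta)^2$. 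Combining the two estimates at $s=s_\beta$ and writing $\cos\theta_\beta=1-\eta_\beta$ yields $(D_\beta-s_\beta)^2\leqslant(D_\beta-s_\beta)^2+2D_\beta s_\beta\eta_\beta\leqslant(D_\beta-1+\delta_\beta)^2$, and taking square roots of the (eventually positive) quantities gives $s_\beta\geqslant 1-\delta_\beta$.

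Since $\delta_\beta\to 0$, this forces $d(x,P_\gamma x_\beta)=s_\beta\to 1$, whereas $x_\beta\overset{w}\to x$ requires $d(x,P_\gamma x_\beta)\to 0$ along the fixed geodesic $\gamma$ — a contradiction. Hence $(x_\alpha)$ must be bounded. I expect the main obstacle to be the projection estimate of the middle paragraph: a $\CAT(0)$ space carries no lower curvature bound, so the needed \emph{lower} bound on $d(x_\beta,\gamma(s))$ cannot come from the $\CAT(0)$ inequality \eqref{eq:quadratic} directly and must instead be extracted from the monotonicity of comparison angles. Correctly isolating the fixed limiting direction $z$ — where local compactness, via Theorem \ref{theorem:HopfRinow}, is indispensable and without which the statement genuinely fails — is the other delicate point.
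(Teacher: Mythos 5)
Your argument is correct, and it is in essence the same construction as the paper's: the paper projects the net onto the closed ball $C=\mathbb B(x,R)$, and for $x_\beta$ outside that ball the projection $P_Cx_\beta$ is precisely the point of $[x,x_\beta]$ at distance $R$ from $x$ --- i.e.\ your $z_\beta$ up to the choice of radius --- after which both proofs use Theorem \ref{theorem:HopfRinow} to extract a convergent subnet $z_\beta\to z$ on the sphere and test weak convergence against the single geodesic $\gamma=[x,z]$. Where you differ is in the concluding estimate: the paper reaches its contradiction through two applications of the projection inequality \eqref{eq:projection} and a comparison of the three quantities $d(x_\beta,x)$, $d(x_\beta,P_\gamma x_\beta)$, $d(x_\beta,P_Cx_\beta)$, whereas you bound $s_\beta=d(x,P_\gamma x_\beta)$ from below directly; your version is more quantitative (it shows $P_\gamma x_\beta\to z$, not merely $P_\gamma x_\beta\not\to x$) and arguably cleaner. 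Note, however, that your middle paragraph is over-engineered: the lower bound $d(x_\beta,\gamma(s))^2\geqslant(D_\beta-s)^2+2D_\beta s\,\eta_\beta\geqslant(D_\beta-s)^2$ is all you use, and that last inequality is just the triangle inequality $d(x_\beta,\gamma(s))\geqslant d(x_\beta,x)-d(x,\gamma(s))$, so the entire discussion of Alexandrov versus comparison angles and the fact that $\theta_\beta\to0$ can be deleted without loss; nor is the normalization $D_\beta\to+\infty$ needed beyond guaranteeing $D_\beta>1$ eventually. Finally, your reduction ``unbounded net, hence a subnet with $d(x,x_\beta)\to\infty$'' tacitly assumes the net is frequently outside every ball (i.e.\ no tail is bounded); this is the same looseness as the paper's ``w.l.o.g.\ $d(x_\alpha,x)\geqslant R$ for all $\alpha$,'' so it is not a defect relative to the source, but it deserves the parenthetical ``(eventually)'' you gave it.
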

\begin{proof}
 Let $x_{\alpha}\overset{w}\to x$, but suppose that $(x_{\alpha})$ is unbounded. Then we can w.l.g. assume that $d(x_{\alpha},x) \geqslant R>0$ for all $\alpha\in \mathcal{A}$. Consider the closed geodesic ball $C:=\mathbb{B}(x,R)$. Denote by $P_{C}x_{\alpha}$ the projection of $x_{\alpha}$ onto $C$ for every $\alpha\in \mathcal{A}$. By assumption $(H,d)$ is locally compact. Theorem \ref{theorem:HopfRinow} implies that $C$ is compact, and in particular the boundary $\partial C$ is compact. Hence, $(P_Cx_{\alpha})$ is a net in the compact set $\partial C$. By \cite[\S 5, Theorem 2]{Kelley} there exits a subnet $(P_Cx_{\beta})_{\beta\in\mathcal B}$ converging to some element $z\in\partial C$. Let $\gamma:[0,1]\to H$ denote the geodesic segment connecting $x$ with $z$. Evidently, $\gamma\subset C$. Denote by $\gamma_{\beta}:[0,1]\to H$ the geodesic segment connecting $x$ with $P_Cx_{\beta}$ for each $\beta\in\mathcal B$. Let $P_{\gamma}x_{\beta}$ denote the projection of $x_{\beta}$ onto the geodesic segment $\gamma$.  
From the triangle inequality we obtain
\begin{equation*}
 d(P_Cx_{\beta},z)\geqslant |d(x_{\beta},P_Cx_{\beta})-d(x_{\beta},z)| ,
\end{equation*}
which implies that $\lim_{\beta}|d(x_{\beta},P_Cx_{\beta})-d(x_{\beta},z)|=0$. 
Since both $C$ and $\gamma$  are strongly closed and convex, we have the following quadratic inequalities (see, e.g., \cite[Theorem 2.1.12]{Bacak}):
\begin{align*}
& d(x_{\beta},P_Cx_{\beta})^2+d(P_Cx_{\beta},P_{\gamma}x_{\beta})^2\leqslant d(x_{\beta},P_{\gamma}x_{\beta})^2 , \\
 \label{ineq2}& d(x_{\beta},P_{\gamma}x_{\beta})^2+d(P_{\gamma}x_{\beta},z)^2\leqslant d(x_{\beta},z)^2 ,
\end{align*}
implying that $d(x_{\beta},P_Cx_{\beta})\leqslant d(x_{\beta},P_{\gamma}x_{\beta})\leqslant d(x_{\beta},z)$. 
Therefore, 
we have that
\begin{equation}\label{ineq:locComp1}
\lim_{\beta}|d(x_{\beta},P_{\gamma}x_{\beta})-d(x_{\beta},P_Cx_{\beta})|=0 .
\end{equation}
By assumption $x_{\alpha}\overset{w}\to x$, and in particular, $x_{\beta}\overset{w}\to x$. Therefore, it follows that $P_{\gamma}x_{\beta}\to x$. Consider the geodesic segment $\eta_{\beta}:[0,1]\to H$ connecting $x$ with $x_{\beta}$. Then there exists $z_{\beta}\in\eta_{\beta}$ such that $z_{\beta}\in\partial C$ for every $\beta\in\mathcal B$. Since $z_{\beta}\in\partial C$, we obtain that $d(x_{\beta},P_Cx_{\beta})\leqslant d(x_{\beta},z_{\beta})$ and thus 
$$d(x_{\beta},x)=d(x_{\beta},z_{\beta})+d(z_{\beta},x)\geqslant d(x_{\beta},P_Cx_{\beta})+R,\quad\forall \beta\in\mathcal B$$
which in turn implies that $|d(x_{\beta},x)-d(x_{\beta},P_Cx_{\beta})|\geqslant R>0$. By the triangle inequality we again have
$$d(P_{\gamma}x_{\beta},x)\geqslant|d(x_{\beta},x)-d(x_{\beta}, P_{\gamma}x_{\beta})|,\quad \forall \beta\in\mathcal B.$$
Therefore,  $\lim_{\beta}P_{\gamma}x_{\beta}=x$ implies that $\lim_{\beta}|d(x_{\beta},x)-d(x_{\beta}, P_{\gamma}x_{\beta})|=0$. Moreover we have 
\begin{align*}
0<R &\leqslant |d(x_{\beta},x)-d(x_{\beta},P_Cx_{\beta})|\\&\leqslant |d(x_{\beta},x)-d(x_{\beta}, P_{\gamma}x_{\beta})|+|d(x_{\beta},P_{\gamma}x_{\beta})-d(x_{\beta},P_Cx_{\beta})| ,
\end{align*}
which together with \eqref{ineq:locComp1} yields a contradiction since the right side vanishes in the limit.
\end{proof}

\begin{corollary}
	\label{c:boundedsequence}
 Let $(H,d)$ be locally compact Hadamard space and $(x_{n})_{n\in\mathbb N}$ a sequence in $H$. If $x_{n}\overset{w}\to x$, then $(x_{n})$ is bounded.
\end{corollary}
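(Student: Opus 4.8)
The plan is to deduce this corollary directly from Lemma \ref{l:bounded} by regarding the sequence $(x_n)_{n\in\mathbb N}$ as a particular instance of a net. First I would recall that $(\mathbb N,\leqslant)$ with its usual order is a directed set: given $m,n\in\mathbb N$ the element $\max\{m,n\}$ dominates both. Consequently every sequence $(x_n)_{n\in\mathbb N}$ is in particular a net $(x_\alpha)_{\alpha\in\mathcal A}$ with index set $\mathcal A=\mathbb N$, and the hypothesis that $(H,d)$ is locally compact is carried over verbatim.

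The only point that genuinely requires checking is that sequential weak convergence coincides with the net notion of weak convergence when the directed set is $\mathbb N$. Here I would invoke the net characterization recalled just before Theorem \ref{theorem:HopfRinow}: a net $x_\alpha\overset{w}\to x$ precisely when for every $\gamma\in\Gamma_x(H)$ and every $\varepsilon>0$ there is $\alpha_\varepsilon\in\mathcal A$ with $d(x,P_\gamma x_\alpha)<\varepsilon$ for all $\alpha\geqslant\alpha_\varepsilon$. Specializing to $\mathcal A=\mathbb N$, this says exactly that $d(x,P_\gamma x_n)\to 0$ as $n\to\infty$ for every $\gamma\in\Gamma_x(H)$, which is the definition of $x_n\overset{w}\to x$ from \S\ref{sec:weak}. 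Thus the weakly convergent sequence of the corollary is a weakly convergent net in the sense required by the lemma.

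With these two observations in place, the conclusion is immediate: applying Lemma \ref{l:bounded} to the net $(x_n)_{n\in\mathbb N}$ yields that $(x_n)_{n\in\mathbb N}$ is bounded. I do not anticipate any genuine obstacle here, since the corollary is a specialization of the lemma from nets to sequences; the entire content of the argument is the routine verification that the net and sequential definitions of weak convergence agree on the directed set $\mathbb N$, which follows directly by unwinding the definitions.
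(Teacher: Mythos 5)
Your proposal is correct and matches the paper's (implicit) argument: the corollary is stated there without proof precisely because it is the specialization of Lemma \ref{l:bounded} to the directed set $(\mathbb N,\leqslant)$, exactly as you argue. Your verification that the net and sequential notions of weak convergence coincide for $\mathcal A=\mathbb N$ is the only content needed, and it is carried out correctly.
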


\begin{theorem}
\label{wtopstop}
In a locally compact Hadamard space $(H,d)$ weak topology and strong topology coincide.
\end{theorem}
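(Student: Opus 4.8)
The plan is to use the fact that a topology on a set is completely determined by its convergent nets together with their limits (see, e.g., \cite{Kelley}); it therefore suffices to show that the weak topology $\tau_w$ and the metric topology, which I denote by $\tau_s$, admit exactly the same convergent nets. One inclusion is immediate: by Lemma \ref{l:lemma1} every elementary set $U_x(\varepsilon;\gamma)$ is metric-open, and since finite intersections and arbitrary unions of metric-open sets are metric-open, every weakly open set is metric-open, i.e.\ $\tau_w \subseteq \tau_s$. In particular every strongly convergent net is $\tau_w$-convergent to the same limit, so the only thing left to prove is the converse: that a $\tau_w$-convergent net converges strongly to its $\tau_w$-limit.

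So let $(x_\alpha)_{\alpha\in\mathcal A}$ be a net with $x_\alpha \overset{\tau_w}\to x$. First I would translate this topological convergence into the projection language of $\overset{w}\to$. Since a locally compact Hadamard space is weakly proper (as recorded in the introduction), for every $\gamma\in\Gamma_x(H)$ and $\varepsilon>0$ there is $V\in\tau_w$ with $x\in V\subseteq U_x(\varepsilon;\gamma)$; as the net is eventually in $V$, it is eventually in $U_x(\varepsilon;\gamma)$, i.e.\ $d(x,P_\gamma x_\alpha)<\varepsilon$ eventually. Since $\gamma$ and $\varepsilon$ were arbitrary, this is exactly $x_\alpha\overset{w}\to x$.

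Now I would bring in local compactness. By Lemma \ref{l:bounded} the net $(x_\alpha)$ is bounded, hence contained in some closed ball $\mathbb{B}(x,R)$, which is compact by Theorem \ref{theorem:HopfRinow}. The crux is to upgrade weak convergence to strong convergence on this compact ball. Suppose $x_\alpha\not\to x$ strongly; then some subnet $(x_\beta)$ satisfies $d(x_\beta,x)\geqslant\delta>0$ for all $\beta$. By compactness of $\mathbb{B}(x,R)$ this subnet has a further subnet converging strongly to some $y$, and continuity of the metric gives $d(y,x)\geqslant\delta$, so $y\neq x$. On the one hand, strong convergence implies weak convergence, so this further subnet weakly converges to $y$; on the other hand it is a subnet of $(x_\alpha)$, and since $P_\gamma x_\alpha\to x$ for every $\gamma\in\Gamma_x(H)$ the same holds along any subnet, so it also weakly converges to $x$. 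Uniqueness of weak limits (the argument of Remark \ref{r:uniqueness} applies verbatim to nets) forces $y=x$, a contradiction. Hence $x_\alpha\to x$ strongly.

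The two directions together show that $\tau_w$ and $\tau_s$ have the same convergent nets with the same limits, whence $\tau_w=\tau_s$. I expect the compactness-and-uniqueness step to be the main obstacle: one must extract a strongly convergent subnet inside the compact ball and then reconcile its weak limit with that of the original net. The decisive ingredient enabling it is the boundedness supplied by Lemma \ref{l:bounded} — precisely the place where local compactness is used, and precisely the property that fails for a general Hadamard space.
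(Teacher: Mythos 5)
There is a genuine gap: your argument is circular at the step where you pass from $x_\alpha\overset{\tau_w}\to x$ to $x_\alpha\overset{w}\to x$. That implication requires weak properness (this is exactly the second half of Theorem \ref{wtopBacak}), and you justify weak properness of locally compact spaces by appeal to a remark "recorded in the introduction." But in the paper that remark is only substantiated by Corollary \ref{c:weaklyproper}, which is itself deduced \emph{from} Theorem \ref{wtopstop}: once the two topologies coincide, the elementary sets $U_x(\varepsilon;\gamma)$, being metric-open by Lemma \ref{l:lemma1}, become weakly open, and weak properness follows. There is no independent proof of weak properness for locally compact spaces available before this theorem, and no obvious one exists (the natural candidate neighbourhood of $x$ inside $U_x(\varepsilon;\gamma)$ is the metric ball $B(x,\varepsilon)$, whose weak openness is precisely what is at stake). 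Indeed the introduction stresses that the point of this theorem is to give an \emph{unconditional} proof, i.e.\ one that does not presuppose weak properness.

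The rest of your argument (boundedness via Lemma \ref{l:bounded}, extraction of a strongly convergent subnet in a compact ball, and uniqueness of weak limits to force the strong limit to equal $x$) matches the paper's strategy and is sound. Where the paper differs is precisely in how it produces a weakly convergent net without invoking weak properness: it argues by contradiction from the definition of "not weakly open," choosing for each finite family $F$ of geodesics in $\Gamma_x(H)$ a point $x_F\in V_F\setminus U$ with $V_F=\{y: d(x,P_\gamma y)<1/|F|,\ \gamma\in F\}$, and directing these by inclusion of the index sets; the resulting net weakly converges to $x$ \emph{by construction}, after which boundedness, compactness and uniqueness of weak limits give the contradiction with $x_F\notin U$. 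If you replace your first step by this construction (or supply an independent proof that locally compact Hadamard spaces are weakly proper), your proof goes through.
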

\begin{proof}
Any weakly open set is open. For the converse direction let $U$ be an open set, and suppose that $U$ is not weakly open. Then there is some $x\in U$ such that for any finite collection of geodesic segments $\gamma_1,\gamma_2,\cdots,\gamma_n\in\Gamma_x(H)$ and any $\varepsilon>0$ the set $\bigcap_{i=1}^nU_x(\varepsilon;\gamma_i)$ is not entirely contained in $U$.
let $\mathscr F$ be the collection of all finite subsets from $\Gamma_x(H)$. 
Two sets $F_1,F_2\in\mathscr F$ are said to be equivalent whenever $|F_1|=|F_2|$. Let $[F]$ denote an equivalence class and $|[F]|=n_F$ its cardinality. 
Consider the following sets 
\begin{equation*}
\label{eq:opensets}
V_F=\{y\in C\;:\; d(x,P_{\gamma}y)<1/n_F,\;\gamma\in F\}.
\end{equation*}
By the above observation then for every $F\in\mathscr F$ there exists $x_F\in V_{F}\setminus U$. If we set $F_1\geq F_2$ whenever $F_1\subseteq F_2$ then $(\mathscr F,\geq)$ is a directed set.
In particular $(x_F)_{F\in\mathscr F}$ is a net
and by construction 
$(x_F)_{F\in\mathscr F}$ weakly converges to $x$. From Lemma \ref{l:bounded} the net $(x_F)_{F\in\mathscr F}$ is bounded and consequently by Theorem \ref{theorem:HopfRinow} the (strong) closure $\cl(\{x_{F}\}_{F\in\mathscr F})$ is compact in $H$. Then by virtue of \cite[\S 5, Theorem 2]{Kelley} there is a subnet $(x_{F'})_{F'\in\mathscr F'}$ of the net $(x_F)$ converging to a certain element $y\in \cl(\{x_{F}\}_{F\in\mathscr F})$. In particular $x_{F'}\overset{w}\to y$. By Remark \ref{r:uniqueness}, applied to nets, we have that $y=x$. This is impossible since by construction $x_{F'}\notin U$ for all $F'\in\mathscr F'$.
\end{proof}

\begin{corollary}
	\label{c:weaklyproper}
	Every locally compact Hadamard space is weakly proper.
\end{corollary}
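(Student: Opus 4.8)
The plan is to deduce weak properness directly from the coincidence of topologies just established in Theorem \ref{wtopstop}, so that the corollary becomes a short chain of implications requiring no new constructions. Recall from Definition \ref{d:weakly-proper} that $(H,d)$ is weakly proper precisely when, for every $x \in H$ and every $\gamma \in \Gamma_x(H)$, the point $x$ is a $\tau_w$-interior point of the elementary set $U_x(\varepsilon;\gamma)$; and, as noted immediately after that definition, it suffices to verify that every elementary set is itself weakly open.

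First I would invoke Lemma \ref{l:lemma1}, which guarantees that each elementary set $U_x(\varepsilon;\gamma)$ is open in the metric topology. Since $(H,d)$ is assumed locally compact, Theorem \ref{wtopstop} asserts that the weak topology $\tau_w$ and the strong (metric) topology coincide. Consequently every metrically open set is weakly open, so in particular each $U_x(\varepsilon;\gamma)$ lies in $\tau_w$.

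It then remains only to observe that $x$ belongs to $U_x(\varepsilon;\gamma)$: as $\gamma \in \Gamma_x(H)$ satisfies $\gamma(0)=x$, the point $x$ lies on $\gamma$, whence $P_\gamma x = x$ and $d(x,P_\gamma x)=0<\varepsilon$. Taking $V=U_x(\varepsilon;\gamma)$ thus exhibits a weakly open set with $x\in V\subseteq U_x(\varepsilon;\gamma)$, which is exactly the interiority demanded by Definition \ref{d:weakly-proper}. Since $x$, $\gamma$ and $\varepsilon$ are arbitrary, the space is weakly proper.

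Because the argument is just this two-step combination of Lemma \ref{l:lemma1} and Theorem \ref{wtopstop}, there is no genuine obstacle in the corollary itself; all the substantive work has already been carried out in the proof of Theorem \ref{wtopstop}, namely the net construction together with the boundedness estimate supplied by Lemma \ref{l:bounded}.
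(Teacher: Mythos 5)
Your argument is correct and is essentially identical to the paper's own proof: both deduce from Theorem \ref{wtopstop} (via Lemma \ref{l:lemma1}) that elementary sets are weakly open, whence $x$ is a $\tau_w$-interior point of $U_x(\varepsilon;\gamma)$. The only difference is that you spell out the appeal to Lemma \ref{l:lemma1} and the trivial fact that $d(x,P_\gamma x)=0$, which the paper leaves implicit.
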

\begin{proof}
	It follows from Theorem \ref{wtopstop} that the elementary sets $U_x(\varepsilon;\gamma)$ are weakly open for every $x\in H, \gamma\in\Gamma_x(H)$ and $\varepsilon>0$. In particular $x$ is an interior point in $\tau_w$ of any elementary set $U_x(\varepsilon;\gamma)$.
\end{proof}

\begin{corollary}
	\label{c:wconv=sconv}
	In a locally compact Hadamard space weak and strong convergence coincide.
\end{corollary}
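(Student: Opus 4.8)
The plan is to deduce the corollary directly from the machinery already assembled, so the argument is short. Recall first that the implication $x_n\to x \Rightarrow x_n\overset{w}\to x$ was noted to be evident immediately after Definition~\ref{d:weakly-proper}: nonexpansiveness of the projections gives $d(x,P_\gamma x_n)=d(P_\gamma x,P_\gamma x_n)\leqslant d(x,x_n)\to 0$ for every $\gamma\in\Gamma_x(H)$. Hence strong convergence always implies weak convergence, and only the reverse implication requires local compactness.

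For the converse I would chain the three results proved just above. Suppose $x_n\overset{w}\to x$. By Corollary~\ref{c:weaklyproper} a locally compact Hadamard space is weakly proper, so Theorem~\ref{wtopBacak} applies and yields $x_n\overset{\tau_w}\to x$, i.e.\ sequential convergence in the weak topology. But Theorem~\ref{wtopstop} asserts that in a locally compact Hadamard space the weak topology $\tau_w$ and the strong (metric) topology coincide; therefore $\tau_w$-sequential convergence is the same as metric sequential convergence, and we conclude $x_n\to x$ strongly. Combining the two directions gives the claimed equivalence.

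There is essentially no obstacle at this stage, since all the genuine difficulty was absorbed into Lemma~\ref{l:bounded} and Theorem~\ref{wtopstop}. The only point requiring care is to keep the two notions of convergence distinct --- the projection-based weak convergence $\overset{w}\to$ and the purely topological sequential convergence $\overset{\tau_w}\to$ --- and to invoke weak properness (via Corollary~\ref{c:weaklyproper}) precisely where the passage from $\overset{\tau_w}\to$ back to $\overset{w}\to$ is needed in Theorem~\ref{wtopBacak}.

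As a robustness check one can also give a self-contained subsequence argument that bypasses the full strength of Theorem~\ref{wtopstop}. Assume $x_n\overset{w}\to x$. By Corollary~\ref{c:boundedsequence} the sequence $(x_n)$ is bounded, so by Theorem~\ref{theorem:HopfRinow} every subsequence admits a further subsequence converging strongly, say $x_{n_{k_j}}\to y$. Strong convergence implies weak convergence, whence $x_{n_{k_j}}\overset{w}\to y$; on the other hand $P_\gamma x_{n_{k_j}}\to x$ for every $\gamma\in\Gamma_x(H)$ as a subsequence of the convergent sequence of projections $P_\gamma x_n$, so $x_{n_{k_j}}\overset{w}\to x$ as well. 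Uniqueness of weak limits (Remark~\ref{r:uniqueness}) forces $y=x$. Thus every subsequence of $(x_n)$ has a further subsequence converging strongly to $x$, which is equivalent to $x_n\to x$, confirming the equivalence independently.
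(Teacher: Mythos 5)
Your argument is correct and is essentially the route the paper intends: the corollary is stated without proof as an immediate consequence of Theorem~\ref{wtopstop} together with Theorem~\ref{wtopBacak}, which is exactly your main chain ($x_n\overset{w}\to x\Rightarrow x_n\overset{\tau_w}\to x\Rightarrow x_n\to x$ once $\tau_w=\tau_s$), plus the always-valid converse. One minor remark: the implication $x_n\overset{w}\to x\Rightarrow x_n\overset{\tau_w}\to x$ is the unconditional half of Theorem~\ref{wtopBacak}, so your appeal to weak properness via Corollary~\ref{c:weaklyproper} at that step is superfluous (though harmless), and your self-contained subsequence argument via Corollary~\ref{c:boundedsequence}, Theorem~\ref{theorem:HopfRinow} and Remark~\ref{r:uniqueness} is a valid independent verification.
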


\subsection{A commentary}
We discuss shortly \cite[Remark 3.28]{BerdellimaPhD} made by the author about another topology which is weaker than the metric topology and somewhat different from $\tau_w$. By Lemma \ref{l:lemma1} the elementary sets $U_x(\varepsilon;\gamma)$ are open in the usual metric topology. If $\mathcal{U}$ is the collection of all finite intersections of sets of the form $U_x(\varepsilon;\gamma)$, where $x$ and $\gamma$ vary, let $\tau$ be the smallest topology generated by $\mathcal{U}$
i.e. any $U\in\tau$ is the union of sets from $\mathcal{U}$. Then any set $U\in \tau$ is open in the usual metric topology. Moreover convergence $\tau$ implies weak convergence (convergence along geodesics) for if $x_n\overset{\tau}\to x$ then for any $\gamma\in\Gamma_x(H)$ and any $\varepsilon>0$ we have $x_n\in U_x(\varepsilon;\gamma)$ for all large enough $n$ i.e. $\lim_{n\to \infty}d(x,P_{\gamma}x_n)=0$ for any $\gamma\in\Gamma_x(H)$; therefore, $x_n\overset{w}\to x$. In particular it holds $\tau_w\subseteq\tau$ and they would coincide if and only if the elementary sets $U_x(\varepsilon;\gamma)$ are open in $\tau_w$. Topology $\tau$ enjoys some properties without any additional assumptions on the space. For example it is Hausdorff and a closed convex set is $\tau$-closed (the proofs for these claims would be identical to the ones for $\tau_w$). Moreover in a locally compact space $\tau=\tau_S$. It is of interest to study also this topology.

\section{The dual space}
\label{s:dual}
\subsection{Construction of the dual and weak-* topology}
The weak topology can also be characterized by considering functions $\phi_{\gamma}(x;\cdot):H\to\mathbb{R}_+$ given by
\begin{equation*}
\phi_{\gamma}(x;y):=d(x,P_{\gamma}y)\; \forall y\in H, \gamma\in \Gamma_{x}(H) .
\end{equation*} 
Indeed, let $\gamma\in\Gamma_{x}(H)$, then  $U_{x}(\varepsilon;\gamma)=\phi_{\gamma}^{-1}(x; \cdot)[0,\varepsilon)$. Notice that nonexpansiveness of projections to convex sets implies that $\phi_{\gamma}$ is Lipschitz continuous with Lipschitz constant equal to $1$. Let 
\begin{equation*}
\|\phi_{\gamma}-\phi_{\eta}\|_{\infty}:=\sup_{y\in H\setminus\{x\}}\frac{|\phi_{\gamma}(x;y)-\phi_{\eta}(x;y)|}{d(x,y)} .
\end{equation*}                                                                                    
Let $H^*_x:=\{\phi_{\gamma}(x;\cdot):\gamma\in\Gamma_x(H)\}$, and let $d_*(\phi_{\gamma}, \phi_{\eta}):= \|\phi_{\gamma}-\phi_{\eta}\|_{\infty}$. 
Notice that since $\phi_{\gamma}(x;y)\leqslant d(x,y)$, the value of $d_*(\phi_{\gamma}, \phi_{\eta})$ is finite.
\begin{lemma} 
	\label{l:dual}
$(\Phi_x(H), d_*$) is a metric space.
\end{lemma}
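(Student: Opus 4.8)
The plan is to verify directly the three defining properties of a metric for the function $d_*$ on $\Phi_x(H)$, treating the elements of $\Phi_x(H)=H^*_x$ as the functions $\phi_\gamma(x;\cdot):H\to\mathbb{R}_+$ themselves, so that two such elements are equal precisely when they agree pointwise on $H$. Finiteness of $d_*$ has already been observed in the text: since $0\leqslant\phi_\gamma(x;y)\leqslant d(x,y)$ and likewise for $\phi_\eta$, the numerator $|\phi_\gamma(x;y)-\phi_\eta(x;y)|$ is bounded by $d(x,y)$, so every quotient in the defining supremum lies in $[0,1]$ and hence $d_*(\phi_\gamma,\phi_\eta)\in[0,1]$. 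Non-negativity and symmetry are then immediate, since the supremum is taken over a family of non-negative quantities and $|\phi_\gamma(x;y)-\phi_\eta(x;y)|=|\phi_\eta(x;y)-\phi_\gamma(x;y)|$ pointwise.

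For the identity of indiscernibles I would argue that $d_*(\phi_\gamma,\phi_\eta)=0$ forces each quotient in the supremum to vanish, hence $\phi_\gamma(x;y)=\phi_\eta(x;y)$ for every $y\in H\setminus\{x\}$. At the single excluded point $y=x$ one has $\phi_\gamma(x;x)=d(x,P_\gamma x)=0=d(x,P_\eta x)=\phi_\eta(x;x)$, using that $x=\gamma(0)\in\gamma$ and $x=\eta(0)\in\eta$ so that $P_\gamma x=P_\eta x=x$. Thus the two functions agree on all of $H$, i.e. they coincide as elements of $\Phi_x(H)$; the converse implication is trivial.

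The triangle inequality I would obtain by pushing the ordinary triangle inequality for the absolute value through the supremum. Given a third geodesic $\xi\in\Gamma_x(H)$ and any $y\in H\setminus\{x\}$, dividing the pointwise estimate $|\phi_\gamma(x;y)-\phi_\xi(x;y)|\leqslant|\phi_\gamma(x;y)-\phi_\eta(x;y)|+|\phi_\eta(x;y)-\phi_\xi(x;y)|$ by $d(x,y)$ and bounding each of the two resulting quotients by its own supremum yields $\frac{|\phi_\gamma(x;y)-\phi_\xi(x;y)|}{d(x,y)}\leqslant d_*(\phi_\gamma,\phi_\eta)+d_*(\phi_\eta,\phi_\xi)$. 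Taking the supremum over all such $y$ then gives $d_*(\phi_\gamma,\phi_\xi)\leqslant d_*(\phi_\gamma,\phi_\eta)+d_*(\phi_\eta,\phi_\xi)$.

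Since this is essentially the verification that a weighted $L^\infty$-type expression defines a metric on a space of functions, no step presents a genuine obstacle. The only point requiring a moment's care is the identity of indiscernibles: one must recall that the points of $\Phi_x(H)$ are the projection functions themselves, so that distinct geodesics inducing the same function represent the \emph{same} element, and one must separately check the value $y=x$ that the supremum ignores.
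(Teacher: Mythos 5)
Your verification of finiteness, non-negativity, symmetry and the triangle inequality is fine, and indeed the paper dismisses exactly these points as ``evident.'' The issue is the identity of indiscernibles, which you resolve by a definitional convention --- declaring that two geodesics inducing the same function represent the \emph{same} element of $\Phi_x(H)$ --- whereas the paper's entire proof is devoted to showing that this situation cannot occur: if $d_*(\phi_\gamma,\phi_\eta)=0$, so that $\phi_\gamma(x;\cdot)=\phi_\eta(x;\cdot)$ pointwise, then in fact $\gamma=\eta$. The argument evaluates both functions at $y=\gamma(1)$ and $y=\eta(1)$; since $P_\gamma\eta(1)$ lies on $\gamma$ and $P_\eta\gamma(1)$ lies on $\eta$, the identities $d(x,\gamma(1))=d(x,P_\gamma\eta(1))+d(P_\gamma\eta(1),\gamma(1))$ and $d(x,\eta(1))=d(x,P_\eta\gamma(1))+d(P_\eta\gamma(1),\eta(1))$ combine with $\phi_\gamma=\phi_\eta$ to force $P_\gamma\eta(1)=\gamma(1)$ and $P_\eta\gamma(1)=\eta(1)$. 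If $\gamma(1)\neq\eta(1)$, the projection inequality \eqref{eq:projection} makes both Alexandrov angles $\angle_{\gamma(1)}([x,\gamma(1)],[\gamma(1),\eta(1)])$ and $\angle_{\eta(1)}([x,\eta(1)],[\gamma(1),\eta(1)])$ at least $\pi/2$, hence both corresponding comparison angles in the Euclidean triangle on the vertices $\overline x,\overline{\gamma(1)},\overline{\eta(1)}$ are at least $\pi/2$, which is impossible; so $\gamma(1)=\eta(1)$ and $\gamma=\eta$.

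Whether your shortcut is acceptable depends on what one takes $\Phi_x(H)$ to be. Read purely as the set of functions $\{\phi_\gamma(x;\cdot)\}$, your proof is formally complete but reduces the lemma to the tautology that a weighted sup-norm is a metric on a set of functions. The paper, however, systematically indexes the dual by geodesics (the basis sets $U_\gamma(\varepsilon;y_1,\dots,y_n)$ of $\tau_{w^*}$, the identification of $\mathcal H^*_x$ with the classical dual in \S\ref{ss:Hilbert}), and the substance of Lemma \ref{l:dual} is precisely that this indexing is faithful, i.e.\ that $\gamma\mapsto\phi_\gamma$ is injective, so that $d_*$ is a genuine metric rather than a pseudometric when pulled back to $\Gamma_x(H)$. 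That injectivity is the one non-routine claim in the lemma, and your proposal does not address it.
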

\begin{proof}
Symmetry and the triangle inequality are evident. In order to show definiteness, suppose that $d_*(\phi_{\gamma},\phi_{\eta})=0$, then this means $\phi_{\gamma}(x;y)=\phi_{\eta}(x;y)$ for all $y\in H$. In particular, we obtain that $\phi_{\eta}(x;\gamma(1))=d(x,\gamma(1))$ and $\phi_{\gamma}(x;\eta(1))=d(x,\eta(1))$. On the other hand, we have the equation $d(x,\gamma(1))= d(x,P_{\gamma}\eta(1))+d(P_{\gamma}\eta(1),\gamma(1))$ and analogously $d(x,\eta(1))= d(x,P_{\eta}\gamma(1))+d(P_{\eta}\gamma(1),\eta(1))$. These two equations imply that $d(P_{\gamma}\eta(1),\gamma(1))=d(P_{\eta}\gamma(1),\eta(1))=0$.  We claim that $\gamma(1)=\eta(1)$. If not then the Alexandrov angle $\angle_{\gamma(1)}([x,\gamma(1)],[\gamma(1),\eta(1)])\geqslant\pi/2$ and analogously $\angle_{\eta(1)}([x,\eta(1)],[\gamma(1),\eta(1)])\geqslant\pi/2$ (see \cite[Theorem 2.1.12]{Bacak} ). On the other hand by the cosine formula for Euclidean triangles, the comparison angles 
$\overline{\angle}_{\gamma(1)}(x,\eta(1))$ and $\overline{\angle}_{\eta(1)}(x,\gamma(1))$ satisfy 
$\overline{\angle}_{\gamma(1)}(x,\eta(1))\geqslant \angle_{\gamma(1)}([x,\gamma(1)],[\gamma(1),\eta(1)])$ and $\overline{\angle}_{\eta(1)}(x,\gamma(1))\geqslant \angle_{\eta(1)}([x,\eta(1)],[\gamma(1),\eta(1)])$. 
Therefore $$\overline{\angle}_{\gamma(1)}(x,\eta(1))\geqslant\pi/2\quad\text{and}\quad\overline{\angle}_{\eta(1)}(x,\gamma(1))\geqslant\pi/2,$$ which is impossible. This completes the proof.
\end{proof}

We call $(H^*_x,d_*)$ the \emph{dual} of $H$ at $x$.
We introduce a topology that is weaker than the one induced by $d_*(\cdot,\cdot)$. The concept of weak-* convergence is defined as follows: A sequence $(\phi_{\gamma_n})\subseteq H^*_x$ is said to weak-* converge to some $\phi_{\gamma}\in H^*_x$ if and only if $\lim_{n\to \infty}\phi_{\gamma_n}(x;y)=\phi_{\gamma}(x;y)$ for all $y\in H$. It is obvious that strong convergence implies weak-* convergence. Weak-* convergence gives rise to a topology which we call the weak-* topology on $H^*_x$. 

\begin{proposition}
	\label{p:weak*}
	A basis for the weak-* topology on $H^*_x$ is determined by the sets 
	\begin{equation}
	\label{eq:openballweak*}
	U_{\gamma}(\varepsilon;y_1,y_2,...,y_n):=\{\phi_{\eta}\in H^*_x: |\phi_{\gamma}(x;y_i)-\phi_{\eta}(x;y_i)|<\varepsilon,\forall i=1,2,..,n\}
	\end{equation}
	where $ y_i\in H,\; \forall i=1,2,...,n;\, n\in\mathbb{N}$
	i.e., any open set in the weak-* topology is a union of sets of the form \eqref{eq:openballweak*}. We denote this topology by $\tau_{w^*}$.
\end{proposition}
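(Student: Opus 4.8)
The plan is to verify directly that the collection $\mathcal B$ of all sets $U_{\gamma}(\varepsilon;y_1,\ldots,y_n)$, as $\gamma$ ranges over $\Gamma_x(H)$, the points $y_i$ over $H$, the integer $n$ over $\mathbb N$, and $\varepsilon$ over the positive reals, satisfies the two axioms characterizing a basis, and then to confirm that the topology it generates is precisely the one tracking weak-* convergence. This is the Hadamard-space reflection of the classical fact that pointwise convergence of a family of functions is induced by the product-type topology whose basic neighborhoods constrain only finitely many ``coordinates'' $y_1,\ldots,y_n$.

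First I would check the covering axiom: every $\phi_{\gamma}\in H^*_x$ lies in a basis element, since trivially $\phi_{\gamma}\in U_{\gamma}(\varepsilon;y)$ for any $y\in H$ and any $\varepsilon>0$, the defining inequality reading $0<\varepsilon$. Next comes the intersection axiom, the only place demanding a computation. Given $\phi_{\eta}\in U_{\gamma_1}(\varepsilon_1;y_1,\ldots,y_n)\cap U_{\gamma_2}(\varepsilon_2;z_1,\ldots,z_m)$, I would set
$$\delta:=\min_{i,j}\bigl\{\varepsilon_1-|\phi_{\gamma_1}(x;y_i)-\phi_{\eta}(x;y_i)|,\;\varepsilon_2-|\phi_{\gamma_2}(x;z_j)-\phi_{\eta}(x;z_j)|\bigr\}>0$$
and claim that $U_{\eta}(\delta;y_1,\ldots,y_n,z_1,\ldots,z_m)$ is the required basis element sandwiched between $\phi_{\eta}$ and the intersection. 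Membership of $\phi_{\eta}$ is immediate, and for any $\phi_{\theta}$ in this set the triangle inequality gives, for each $i$,
$$|\phi_{\gamma_1}(x;y_i)-\phi_{\theta}(x;y_i)|\leqslant|\phi_{\gamma_1}(x;y_i)-\phi_{\eta}(x;y_i)|+|\phi_{\eta}(x;y_i)-\phi_{\theta}(x;y_i)|<\varepsilon_1,$$
where strictness is secured by the choice of $\delta$; the symmetric estimate handles the $z_j$. Hence $\phi_{\theta}$ lies in both original sets, establishing the inclusion and with it the basis axiom, so that $\mathcal B$ generates a topology.

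Having produced a topology from $\mathcal B$, I would then show that its convergent nets are exactly the weak-* convergent ones, which legitimizes calling it the weak-* topology $\tau_{w^*}$. The key observation is that, running the intersection argument above with $\gamma_1=\gamma$, the sets $U_{\gamma}(\varepsilon;y_1,\ldots,y_n)$ form a neighborhood basis at $\phi_{\gamma}$: any basis element containing $\phi_{\gamma}$ contains one of these centered sets. Convergence of a net $(\phi_{\gamma_{\alpha}})$ to $\phi_{\gamma}$ in $\tau_{w^*}$ therefore amounts to eventual membership in each such set, i.e.\ to $|\phi_{\gamma}(x;y_i)-\phi_{\gamma_{\alpha}}(x;y_i)|<\varepsilon$ eventually, for every finite family $y_1,\ldots,y_n$ and every $\varepsilon>0$. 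Since finitely many simultaneous ``eventual'' constraints can be met eventually precisely when each single-point constraint $\phi_{\gamma_{\alpha}}(x;y)\to\phi_{\gamma}(x;y)$ holds, this is equivalent to $\lim_{\alpha}\phi_{\gamma_{\alpha}}(x;y)=\phi_{\gamma}(x;y)$ for all $y\in H$, which is the definition of weak-* convergence.

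I expect no genuine obstacle here. The only points demanding care are the strict inequalities in the intersection axiom, handled by the explicit $\delta$ above, and the passage between single-point and finite-family constraints when matching the generated topology to weak-* convergence; both are routine once the neighborhood basis is identified.
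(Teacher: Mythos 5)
Your argument is correct and follows essentially the same route as the paper: the covering axiom is checked identically, and your choice of $\delta$ as the minimum over all slacks $\varepsilon_1-s_i$ and $\varepsilon_2-t_j$ is exactly the paper's $\min\{\varepsilon_1-s,\varepsilon_2-t\}$ with $s,t$ the respective maxima, followed by the same triangle-inequality estimate. Your closing paragraph identifying convergent nets with weak-* convergent ones is a welcome addition, though the paper defers that verification to Theorem \ref{wtop*}(1) rather than including it in this proposition's proof.
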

\begin{proof}
	Note that $\phi_{\gamma}\in U_{\gamma}(\varepsilon;y_1,y_2,...,y_n)$ for all $\varepsilon>0$ and $y_1,y_2,...,y_n\in H$. Hence $$H^*_x\subseteq \bigcup_{\gamma\in\Gamma_x(H)}U_{\gamma}(\varepsilon;y_1,y_2,...,y_n)\subseteq \bigcup_{\gamma\in\Gamma_x(H),\varepsilon>0, y_1,...,y_n\in H}U_{\gamma}(\varepsilon;y_1,y_2,...,y_n).$$ 
	Therefore the collection of sets $ \{U_{\gamma}(\varepsilon;y_1,y_2,...,y_n): \gamma\in\Gamma_x(H), \varepsilon>0, y_i\in H, i=1,2,...,n\}$ covers $H^*_x$. 
	
	Need to check the intersection property. Let $U_{\gamma_1}(\varepsilon_1;y_1,y_2,...,y_n)$ and $U_{\gamma_2}(\varepsilon_2;z_1,z_2,...,z_m)$ and $\phi_{\eta}\in U_{\gamma_1}(\varepsilon_1;y_1,y_2,...,y_n)\cap U_{\gamma_2}(\varepsilon_2;z_1,z_2,...,z_m)$. There are $0\leqslant s_1,s_2,...,s_n<\varepsilon_1$ and $0\leqslant t_1,t_2,...,t_m<\varepsilon_2$ such that 
	\begin{align*}
	& |\phi_{\gamma_1}(x;y_i)-\phi_{\eta}(x;y_i)|=s_i\\
	& |\phi_{\gamma_2}(x;z_j)-\phi_{\eta}(x;z_j)|=t_j
	\end{align*}
	for $i=1,2,...,n$ and $j=1,2,...,m$. Let $s:=\max\{s_i: i=1,2,...,n\}$ and $t:=\max\{t_j: j=1,2,...,m\}$. Consider the sets $U_{\eta}(\varepsilon_1-s; y_1,...,y_n)$ and $U_{\eta}(\varepsilon_2-t;z_1,z_2,...,z_m)$. Let $\phi_{\sigma}\in U_{\eta}(\varepsilon_1-s; y_1,...,y_n)$ then
	\begin{align*}
	|\phi_{\gamma_1}(x;y_i)-\phi_{\sigma}(x;y_i)|&\leqslant |\phi_{\gamma_1}(x;y_i)-\phi_{\eta}(x;y_i)|+|\phi_{\eta}(x;y_i)-\phi_{\sigma}(x;y_i)|\\&
	< s_i+(\varepsilon_1-s)\leqslant s+(\varepsilon_1-s)=\varepsilon_1, \;\forall i=1,2,...,n
	\end{align*}
	implying $\phi_{\sigma}\in U_{\gamma_1}(\varepsilon_1; y_1,...,y_n)$. Hence $U_{\eta}(\varepsilon_1-s; y_1,...,y_n)\subseteq U_{\gamma_1}(\varepsilon_1; y_1,...,y_n)$. Similarly $U_{\eta}(\varepsilon_2-t; z_1,...,z_m)\subseteq U_{\gamma_2}(\varepsilon_2; z_1,...,z_m)$. Now let $\delta:=\min\{\varepsilon_1-s,\varepsilon_2-t\}$ then 
	\begin{align*}
	\phi_{\eta}\in U_{\eta}(\delta;y_1,...,y_n, z_1,...,z_m)&=U_{\eta}(\delta; y_1,...,y_n)\cap U_{\eta}(\delta; z_1,...,z_m)\\&\subseteq U_{\eta}(\varepsilon_1-s; y_1,...,y_n)\cap U_{\eta}(\varepsilon_2-t; z_1,...,z_m)\\
	&\subseteq U_{\gamma_1}(\varepsilon_1; y_1,...,y_n)\cap U_{\gamma_2}(\varepsilon_2; z_1,...,z_m)
	\end{align*}
	as desired. This completes the proof.
\end{proof}

\begin{theorem} 
\label{wtop*}
The following properties hold:
\begin{enumerate}
 \item A sequence $(\phi_{\gamma_n})$ weak-* converges to $\phi_{\gamma}$ if and only if $\phi_{\gamma_n}\overset{\tau_{w^*}}\to \phi_{\gamma}$.
 \item $(H^*_x, \tau_{w^*})$ is a Hausdorff space.
 \item A weak-* closed set in $H^*_x$ is closed.
\end{enumerate}
\end{theorem}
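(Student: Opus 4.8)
The plan is to treat the three assertions separately, each paralleling a result already established for $\tau_w$ but now inside the metric space $(H^*_x, d_*)$. The essential simplification compared with Theorem \ref{wtopBacak} is that here the basic neighbourhoods $U_\gamma(\varepsilon; y_1, \ldots, y_n)$ supplied by Proposition \ref{p:weak*} are, \emph{by construction}, members of $\tau_{w^*}$; in particular every singleton set $U_\gamma(\varepsilon; y)$ is weak-* open. Consequently no regularity hypothesis analogous to weak properness is required, and all three parts go through unconditionally.

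For part (1) I would argue both implications directly from the basis. Assume first that $\phi_{\gamma_n}$ weak-* converges to $\phi_\gamma$, i.e. $\phi_{\gamma_n}(x;y) \to \phi_\gamma(x;y)$ for every $y \in H$. Given any $\tau_{w^*}$-open $U \ni \phi_\gamma$, choose a basic set $U_\gamma(\varepsilon; y_1, \ldots, y_n) \subseteq U$ containing $\phi_\gamma$; since there are only finitely many test points, pointwise convergence yields $|\phi_\gamma(x;y_i) - \phi_{\gamma_n}(x;y_i)| < \varepsilon$ for all $i$ and all large $n$, whence $\phi_{\gamma_n} \in U$ eventually. Conversely, if $\phi_{\gamma_n} \overset{\tau_{w^*}}\to \phi_\gamma$, then fixing $y \in H$ and $\varepsilon > 0$ and applying the definition to the single neighbourhood $U_\gamma(\varepsilon; y)$ gives $|\phi_\gamma(x;y) - \phi_{\gamma_n}(x;y)| < \varepsilon$ for large $n$; as $y$ and $\varepsilon$ are arbitrary, this is exactly weak-* convergence.

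For part (2) I would separate two distinct points of $H^*_x$ directly. If $\phi_\gamma \neq \phi_\eta$ as functions, there is some $y_0 \in H$ with $\delta := |\phi_\gamma(x;y_0) - \phi_\eta(x;y_0)| > 0$, and then $U_\gamma(\delta/2; y_0)$ and $U_\eta(\delta/2; y_0)$ are disjoint weak-* neighbourhoods of $\phi_\gamma$ and $\phi_\eta$: any common element $\phi_\sigma$ would force $\delta \leqslant |\phi_\gamma(x;y_0) - \phi_\sigma(x;y_0)| + |\phi_\sigma(x;y_0) - \phi_\eta(x;y_0)| < \delta$ by the triangle inequality, a contradiction.

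For part (3) it suffices to show that $\tau_{w^*}$ is coarser than the metric topology induced by $d_*$, for then every $\tau_{w^*}$-closed set is automatically $d_*$-closed. Concretely I would verify that each basic set $U_\gamma(\varepsilon; y_1, \ldots, y_n)$ is $d_*$-open: given $\phi_\eta$ in it, set $s_i := |\phi_\gamma(x;y_i) - \phi_\eta(x;y_i)| < \varepsilon$ and use the estimate $|\phi_\eta(x;y_i) - \phi_\sigma(x;y_i)| \leqslant d_*(\phi_\eta, \phi_\sigma)\, d(x,y_i)$, which is immediate from the definition of $d_*$ as a supremum of normalized differences. Choosing the radius $r := \min_{\{i :\, y_i \neq x\}} (\varepsilon - s_i)/d(x,y_i) > 0$ (with no constraint arising from any $y_i = x$, since there all $\phi$ vanish) then forces $\phi_\sigma \in U_\gamma(\varepsilon; y_1, \ldots, y_n)$ whenever $d_*(\phi_\eta, \phi_\sigma) < r$. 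The one point requiring care, and the step I expect to be the main obstacle, is precisely this passage from the un-normalized differences defining the basic sets to the normalized supremum defining $d_*$; it succeeds only because the number of test points $y_1, \ldots, y_n$ is finite, so that the individual factors $d(x,y_i)$ can be absorbed into a single positive radius $r$.
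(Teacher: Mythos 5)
Your proposal is correct, and parts (1) and (2) coincide with the paper's argument: part (1) is exactly the finitely-many-test-points reduction via the basis of Proposition \ref{p:weak*}, and part (2) is the same separation by $U_{\gamma}(\delta/2;y_0)$ and $U_{\eta}(\delta/2;y_0)$ with the triangle inequality. For part (3) you take a mildly different route. The paper argues sequentially: it takes a sequence in a weak-* closed set $S$ converging in $d_*$, notes that strong convergence implies weak-* convergence (hence $\tau_{w^*}$-convergence by part (1)), concludes the limit lies in $S$, and uses that sequential closedness equals closedness in a metric space. You instead prove the topology containment $\tau_{w^*}\subseteq\tau_{d_*}$ directly by showing each basic set $U_{\gamma}(\varepsilon;y_1,\dots,y_n)$ is $d_*$-open, via the estimate $|\phi_{\eta}(x;y)-\phi_{\sigma}(x;y)|\leqslant d_*(\phi_{\eta},\phi_{\sigma})\,d(x,y)$ and a radius $r$ absorbing the finitely many factors $d(x,y_i)$ (correctly noting that $y_i=x$ causes no trouble since $\phi(x;x)=0$ for every $\phi\in H^*_x$). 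Your version is slightly stronger and more self-contained --- it establishes that every weak-* open set is metrically open, analogous to Lemma \ref{l:lemma1} for $\tau_w$, and avoids appealing to the fact that metric spaces are sequential --- while the paper's version is shorter because it leans on the already-stated observation that strong convergence implies weak-* convergence. Both are valid; the same Lipschitz-type estimate underlies each.
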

\begin{proof}
The first property follows from the definition of weak-* topology and that of open sets given in \eqref{eq:openballweak*}.
For the second property it suffices to show that for any two distinct elements $\phi_{\gamma}$ and $\phi_{\eta}$ there is $\varepsilon>0$ such that the open sets $U_{\gamma}(\varepsilon;y)$ and $U_{\eta}(\varepsilon;y)$ have empty intersection for some $y\in H$. Let $\varepsilon:=|\phi_{\gamma}(x;y)-\phi_{\eta}(x;y)|/2$ and suppose there is $\phi_{\mu}\in U_{\gamma}(\varepsilon;y)\cap U_{\eta}(\varepsilon;y)$ then 
  \begin{align*}
   |\phi_{\gamma}(x;y)-\phi_{\eta}(x;y)|&\leqslant |\phi_{\gamma}(x;y)-\phi_{\mu}(x;y)|+|\phi_{\mu}(x;y)-\phi_{\eta}(x;y)|\\&<\frac{1}{2}|\phi_{\gamma}(x;y)-\phi_{\eta}(x;y)|+\frac{1}{2}|\phi_{\gamma}(x;y)-\phi_{\eta}(x;y)|\\&=|\phi_{\gamma}(x;y)-\phi_{\eta}(x;y)|
  \end{align*}
  which is impossible.
In order to show the third property, let $S\subseteq H^*_x$ be a weak-* closed set, and let $\phi_{\gamma_n}$ be a sequence in $S$. Suppose that $\phi_{\gamma_n}\to\phi_{\gamma}$ in the strong topology, then $\phi_{\gamma_n}\overset{w^*}\to\phi_{\gamma}$, which implies that $\phi_{\gamma}\in S$. Therefore $S$ is closed.
\end{proof}
%
%
%
%

\subsection{The case of a Hilbert space} 
\label{ss:Hilbert}
Let $H$ be a Hilbert space $(\mathcal{H}, \|\cdot\|)$ equipped with its canonical norm. Note that for every $x\in \mathcal{H}$ each geodesic segment $\gamma\in\Gamma_x(\mathcal{H})$ corresponds to a unique line $l$ passing through $x$. Given $\gamma,\eta\in\Gamma_x(\mathcal{H})$ we say $\gamma$ is equivalent to $\eta$, and write $\gamma\sim\eta$, if and only if $\gamma,\eta$ belong to the same line $l$. Let $[l]$ denote the equivalence class of all geodesic segments $\gamma\in\Gamma_x(\mathcal{H})$ sharing the same line $l$.
Our aim is to show that our notion of weak convergence along geodesic segments coincides with the usual notion of weak convergence in a Hilbert space. Moreover we prove that a Hilbert space is weakly proper and that the notion of our dual $\mathcal{H}^*_x$ coincides with the usual notion of the dual of a Hilbert space. First we present the following lemma:

\begin{lemma}
\label{l:lines}
 A sequence $(x_n)\subset\mathcal{H}$ weakly converges (in our sense) to $x\in\mathcal{H}$ if and only if $P_{l}x_n\to x$ as $n\uparrow+\infty$ for all lines $l$ containing $x$.
\end{lemma}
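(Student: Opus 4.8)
The plan is to reduce both notions to the behaviour of the single scalar coordinate $a_n := \langle x_n - x, v\rangle$ along each unit direction $v$, using the explicit form of metric projections in a Hilbert space. First I would record the two projection formulas. For a line $l = \{x + sv : s \in \mathbb{R}\}$ through $x$ in unit direction $v$ one has $P_l y = x + \langle y-x, v\rangle v$, hence $\|P_l y - x\| = |\langle y - x, v\rangle|$. For a geodesic segment $\gamma \in \Gamma_x(\mathcal{H})$, which is the Euclidean segment $\gamma(t) = x + tLv$ with $L = \|\gamma(1) - x\|$ and $v$ its unit direction, the nearest-point map onto $\gamma$ clamps the line coordinate to the interval $[0, L]$, so that
\begin{equation*}
d(x, P_\gamma y) = \min\{L, \max\{0, \langle y - x, v\rangle\}\}.
\end{equation*}
These two identities are the only computations needed; everything else is bookkeeping on real numbers.

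For the reverse implication (line projections converge $\Rightarrow$ weak convergence in our sense) I would argue directly: given any $\gamma \in \Gamma_x(\mathcal{H})$ with unit direction $v$, let $l$ be the full line through $x$ in direction $v$. Since $\min\{L, \max\{0, a\}\} \leqslant |a|$ for every real $a$, the formulas give $d(x, P_\gamma x_n) \leqslant |\langle x_n - x, v\rangle| = \|P_l x_n - x\| \to 0$. As $\gamma$ was arbitrary, $P_\gamma x_n \to x$ along every geodesic segment starting at $x$, which is precisely weak convergence in our sense.

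The forward implication is where the genuine point lies. A single segment clamps the coordinate $a_n = \langle x_n - x, v\rangle$ from only one side and saturates at $L$, so by itself it cannot distinguish $a_n \to 0$ from $a_n$ staying negative or drifting to $+\infty$; this is the main obstacle, and it reflects the finite length of segments versus the infinite extent of a line. To overcome it I would test weak convergence against the two oppositely oriented unit segments $\gamma_+(t) = x + tv$ and $\gamma_-(t) = x - tv$. Weak convergence gives $d(x, P_{\gamma_+} x_n) \to 0$ and $d(x, P_{\gamma_-} x_n) \to 0$; by the clamping formula the first forces the positive part $a_n^+ := \max\{0, a_n\} \to 0$ and the second forces $a_n^- := \max\{0, -a_n\} \to 0$ (if either part stayed $\geqslant \varepsilon$ along a subsequence, the corresponding projection distance would stay $\geqslant \min\{L, \varepsilon\} > 0$, a contradiction). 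Since $|a_n| = a_n^+ + a_n^-$, we conclude $\|P_l x_n - x\| = |a_n| \to 0$. As the line $l$ through $x$ was arbitrary, this establishes the forward direction and completes the equivalence.
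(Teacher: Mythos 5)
Your proof is correct and follows essentially the same route as the paper: both arguments reduce the claim to comparing the projection onto a full line $l$ through $x$ with the projections onto geodesic segments contained in $l$. Your write-up is in fact more careful than the paper's on the forward implication, where the paper only remarks that ``a similar argument shows'' the conclusion, while you correctly isolate the real issue --- a single segment clamps the coordinate $\langle x_n-x,v\rangle$ from one side only --- and resolve it by testing against the two oppositely oriented unit segments.
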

\begin{proof}
Suppose that $\lim_nP_lx_n=x$ for all  lines $l$ containing $x$. Let $\gamma\in\Gamma_x(\mathcal{H})$ such that $\gamma\subset l$.  Then all but finitely many of the terms $P_lx_n$ are in the image of $\gamma$, i.e., $P_lx_n=P_{\gamma}x_n$ for all sufficiently large $n$. This means $\lim_nP_{\gamma}x_n=\lim_nP_lx_n=x$. Since this holds for any $l$ containing $x$ and for any $\gamma\in[l]$ then $\lim_nP_{\gamma}x_n=x$ for any geodesic segment $\gamma\in\Gamma_x(\mathcal{H})$. Now let $x_n\overset{w}\to x$. By definition $\lim_nP_{\gamma}x_n=x$ for all $\gamma\in\Gamma_x(\mathcal{H})$. Each $\gamma\in\Gamma_x(\mathcal{H})$ determines a unique line $l$ containing $x$. Then a similar argument shows that $\lim_nP_lx_n=x$ for all lines containing $x$.
\end{proof}
To the collection $\{\phi_{\gamma}(x;\cdot)\}_{\gamma\in[l]}$ we can associate a function $\phi_{[l]}(x;\cdot)$ defined as $ \phi_{[l]}(x;y):=\|x-P_{l}y\|,\hspace{0.2cm}\forall y\in \mathcal{H}$.
Let $\mathcal{H}^*$ denote the dual of a Hilbert space $\mathcal{H}$, that is the space of all bounded linear continuous functionals on $\mathcal H$. A sequence $(x_n)_{n\in\mathbb{N}}\subseteq \mathcal{H}$ is said to converge weakly (in the usual sense) to $x\in\mathcal{H}$ if and only if  $\lim_nf(x_n)=f(x)$ for all $f\in \mathcal{H}^*$.  
\begin{proposition}
	\label{p:Hilbertweakcvg}
	In a Hilbert space weak convergence (in our sense) coincides with the usual notion of weak convergence. 
\end{proposition}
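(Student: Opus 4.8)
The plan is to reduce everything to the explicit formula for the orthogonal projection onto a line in a Hilbert space, using Lemma \ref{l:lines} to replace geodesic segments by full lines and the Riesz representation theorem to rewrite the usual weak convergence in terms of inner products.

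First I would invoke Lemma \ref{l:lines}, which says that $x_n\overset{w}\to x$ (in our sense) if and only if $P_l x_n\to x$ for every line $l$ through $x$. On the other side, by the Riesz representation theorem every bounded linear functional $f\in\mathcal{H}^*$ has the form $f(y)=\langle y,v\rangle$ for a unique $v\in\mathcal{H}$, so the usual weak convergence $\lim_n f(x_n)=f(x)$ for all $f$ is equivalent to $\langle x_n-x,v\rangle\to 0$ for all $v\in\mathcal{H}$. Both notions are thereby phrased purely in terms of the linear structure, and the task reduces to matching projections onto lines with inner products.

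The key computation is the elementary identity for the metric projection onto a line. If $l$ is the line through $x$ with unit direction vector $u$, then for any $y\in\mathcal{H}$ the nearest point of $l$ to $y$ is $P_l y=x+\langle y-x,u\rangle u$, whence $\|P_l x_n-x\|=|\langle x_n-x,u\rangle|$. Consequently $P_l x_n\to x$ if and only if $\langle x_n-x,u\rangle\to 0$. Since the lines through $x$ are in bijection with the unit directions $u$, and since every nonzero $v\in\mathcal{H}$ is a positive multiple of such a unit vector (so that $\langle x_n-x,v\rangle=\|v\|\,\langle x_n-x,u\rangle$), the condition ``$P_l x_n\to x$ for every line $l$ through $x$'' is exactly the condition ``$\langle x_n-x,v\rangle\to 0$ for every $v\in\mathcal{H}$''. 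Combining the two equivalences above then yields the claim.

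I do not expect a serious obstacle: the argument is essentially the projection formula together with the two reductions just described. The only point requiring a little care is the passage between geodesic segments and lines, since a geodesic segment $\gamma$ starting at $x$ traces out only a bounded portion of its line, so that $P_{\gamma}x_n$ and $P_l x_n$ need not coincide for every $n$. This is precisely what Lemma \ref{l:lines} settles: once $P_l x_n$ is close enough to $x$ it falls in the relative interior of the image of $\gamma$, and the two projections agree for all sufficiently large $n$. With that in hand the remainder is routine Hilbert space computation.
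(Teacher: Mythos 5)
Your argument is correct and follows essentially the same route as the paper: reduce from geodesic segments to full lines via Lemma \ref{l:lines}, then identify the metric projection onto a line through $x$ with the inner product against a unit direction vector (the paper records exactly this identification in its equations \eqref{eq:cosine1}--\eqref{eq:cosine2}, phrasing the conclusion as an identification of $\mathcal{H}^*_x/\sim$ with the classical dual $\mathcal{H}^*$). Your version is slightly more explicit in writing out the projection formula $P_l y = x + \langle y-x,u\rangle u$ and the rescaling step $\langle x_n-x,v\rangle=\|v\|\langle x_n-x,u\rangle$, but the substance is the same.
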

\begin{proof}
Let $x\in \mathcal{H}$ and denote by $\mathcal{H}^*_x:=\{\phi_{\gamma}\;|\;\gamma\in\Gamma_x(\mathcal{H})\}$ where $\phi_{\gamma}(y):=\|x-P_{\gamma}y\|$ for every $y\in \mathcal{H}$.	
By Lemma \ref{l:lines} it follows that $\lim_n\phi_{\gamma}(x;x_n)=0$ for all $\gamma\in[l]$ if and only if $\lim_n\phi_{[l]}(x;x_n)=0$. Therefore it is sufficient to restrict to the family of functions $\{\phi_{[l]}(x;\cdot)\}_L$ where $L$ is the set of all lines containing $x$. Clearly $\{\phi_{[l]}(x;\cdot)\}_L=\mathcal{H}^*_x/\sim$. The projection to lines in Hilbert spaces correspond to inner products with a unit vector along the respective line. Tt follows then that the quotient space $\mathcal{H}^*_x/\sim$ together with its quotient metric coincides with the usual dual $\mathcal{H}^*$ of the Hilbert space $\mathcal{H}$.
\end{proof}

\begin{proposition}
	\label{p:Hilbertweaklyprop}
A Hilbert space is weakly proper. In particular the usual weak topology coincides with $\tau_w$.
\end{proposition}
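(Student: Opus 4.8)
The plan is to compute the metric projection onto a geodesic segment explicitly and read off the shape of the elementary sets, which in a Hilbert space turn out to be half-spaces. Fix $x\in\mathcal H$, a geodesic $\gamma\in\Gamma_x(\mathcal H)$ and $\varepsilon>0$. Writing $\gamma(t)=x+tv$ with $v=\gamma(1)-x$, $L:=\|v\|$ and (for $L>0$) unit direction $u:=v/L$, minimizing $t\mapsto\|y-x-tv\|^2$ over $[0,1]$ produces a minimizer that must be clamped to $[0,1]$, giving
\[
d(x,P_\gamma y)=\min\bigl(\max(\langle y-x,u\rangle,0),\,L\bigr).
\]
First I would record the two regimes. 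If $L<\varepsilon$ then $d(x,P_\gamma y)<\varepsilon$ for every $y$, so $U_x(\varepsilon;\gamma)=\mathcal H$; if $L\geqslant\varepsilon$ the truncation at $L$ is inactive and $U_x(\varepsilon;\gamma)=\{y:\langle y-x,u\rangle<\varepsilon\}$, an open half-space. The identical computation shows that for any base point $z$ and any geodesic $\eta$ issuing from $z$ in a unit direction $u$ of length $\geqslant\delta$ one has $U_z(\delta;\eta)=\{y:\langle y-z,u\rangle<\delta\}$.

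Next I would show that each elementary set is itself weakly open, which yields weak properness immediately since $x\in U_x(\varepsilon;\gamma)$. In the case $L\geqslant\varepsilon$, take $z$ with $\langle z-x,u\rangle<\varepsilon$ and set $\rho:=\varepsilon-\langle z-x,u\rangle>0$; choosing the geodesic $\eta$ from $z$ in direction $u$ of length $\geqslant\rho$ and any $0<\delta<\rho$, every $y\in U_z(\delta;\eta)$ satisfies $\langle y-x,u\rangle=\langle y-z,u\rangle+\langle z-x,u\rangle<\delta+(\varepsilon-\rho)<\varepsilon$, so $U_z(\delta;\eta)\subseteq U_x(\varepsilon;\gamma)$. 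Hence $U_x(\varepsilon;\gamma)\in\tau_w$, and since $x$ trivially lies in it, Definition \ref{d:weakly-proper} is satisfied (the case $L<\varepsilon$ being immediate as then $U_x(\varepsilon;\gamma)=\mathcal H$).

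Finally I would identify $\tau_w$ with the usual weak topology $\sigma=\sigma(\mathcal H,\mathcal H^*)$. For $\tau_w\subseteq\sigma$, every elementary set is either $\mathcal H$ or the half-space $\{y:\langle y-x,u\rangle<\varepsilon\}$, i.e.\ the preimage of $(-\infty,\varepsilon)$ under the $\sigma$-continuous functional $y\mapsto\langle y-x,u\rangle$, hence $\sigma$-open; as every $\tau_w$-open set is a union of finite intersections of elementary sets, $\tau_w\subseteq\sigma$. For the reverse inclusion I would use that a neighbourhood basis of $\sigma$ consists of the slabs $\bigcap_{i=1}^n\{y:|\langle y-x,w_i\rangle|<\varepsilon\}$ with $\|w_i\|=1$; each slab $\{y:|\langle y-x,w\rangle|<\varepsilon\}$ is the intersection of the two half-spaces $\{y:\langle y-x,w\rangle<\varepsilon\}$ and $\{y:\langle y-x,-w\rangle<\varepsilon\}$, both elementary sets and hence $\tau_w$-open by the previous step. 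Since $\tau_w$ is closed under finite intersection, these basic $\sigma$-neighbourhoods lie in $\tau_w$, giving $\sigma\subseteq\tau_w$ and therefore $\sigma=\tau_w$ (consistent with Proposition \ref{p:Hilbertweakcvg}).

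The only genuinely delicate point is the projection formula: because geodesics here are \emph{segments} rather than full lines, $P_\gamma$ is a clamped projection, so one must verify that the truncation at length $L$ is inactive precisely when $L\geqslant\varepsilon$, yielding a clean half-space. The same one-sidedness (a segment points in only one direction) is exactly what forces the use of two oppositely directed geodesics to recover the symmetric slabs generating $\sigma$.
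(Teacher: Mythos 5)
Your proof is correct, and it takes a more explicit route than the paper's. The paper establishes weak properness by a synthetic geometric argument: given $y\in U_x(\varepsilon;\gamma)$, it considers the line $l$ through $\gamma$, the parallel translate $l'=l+(y-x)$, and the plane they span, and argues via collinearity of the projections $P_\alpha z, P_l z, P_{l'} z$ that some elementary set $U_y(\delta;\gamma')$ based at $y$ along $l'$ sits inside $U_x(\varepsilon;\gamma)$; the identification of $\tau_w$ with the usual weak topology is then only asserted (``Clearly weak topology coincides with $\tau_w$''). You instead compute the clamped projection formula $d(x,P_\gamma y)=\min(\max(\langle y-x,u\rangle,0),L)$ and identify every elementary set as either $\mathcal H$ or an open half-space $\{y:\langle y-x,u\rangle<\varepsilon\}$, from which both weak properness (each elementary set is itself weakly open, not merely a weak neighbourhood of its base point) and the equality $\tau_w=\sigma(\mathcal H,\mathcal H^*)$ follow by routine inclusions. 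What your approach buys is a complete justification of the second sentence of the proposition, including the observation that one needs two oppositely directed geodesic segments to recover the symmetric slabs generating $\sigma$ --- a point the paper's parallel-line argument leaves implicit. The only caveats are cosmetic: you should note (as you do parenthetically) that the unit vector $u$ requires $L>0$, and that in a Hilbert space a geodesic segment of any prescribed length in any direction always exists, which is what licenses your choice of $\eta$ from $z$ of length at least $\rho$.
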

\begin{proof}
Let $x\in\mathcal{H}$ and consider the elementary set $U_x(\varepsilon;\gamma)$ for some $\varepsilon>0$ and $\gamma\in\Gamma_x(\mathcal{H})$. Let $l$ be the unique line passing through $x$ corresponding to the geodesic segment $\gamma$. Let $y\in U_x(\varepsilon;\gamma)$ and define $l':=l+(y-x)$. Then $l'$ is parallel to $l$. Let $\alpha$ denote the place determined by $l$ and $l'$. Note that for any $z\in \mathcal{H}$ the projections $P_{\alpha}z, P_{l}z$ and $P_{l'}z$ are collinear and moreover their common line $l''$ is perpendicular to both $l$ and $l'$. This argument together with $y\in U_x(\varepsilon;\gamma)$ imply that for some $\delta>0$ and $\gamma'\in\Gamma_y(\mathcal{H})$ belonging to $[l']$ we have $U_y(\delta;\gamma')\subseteq U_x(\varepsilon;\gamma)$. Clearly weak topology coincides with $\tau_w$. 
\end{proof}

\begin{corollary}
	\label{c:productweaklyprop}
	The Cartesian product of a locally compact Hadamard space and a Hilbert space is weakly proper.
\end{corollary}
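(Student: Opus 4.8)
The plan is to prove the Corollary by reducing it to the two weak-properness results already established: Proposition \ref{p:Hilbertweaklyprop} for the Hilbert factor and Corollary \ref{c:weaklyproper} for the locally compact factor. Let $(H,d) = (H_1 \times H_2, d)$, where $H_1$ is locally compact and $H_2$ is a Hilbert space, and where $H$ carries the natural $\ell^2$-product metric $d((a_1,a_2),(b_1,b_2))^2 = d_1(a_1,b_1)^2 + d_2(a_2,b_2)^2$. The first thing to check is that $H$ is genuinely a Hadamard space: completeness is immediate from completeness of the factors, and the $\CAT(0)$ inequality \eqref{eq:quadratic} for the product follows by adding the corresponding inequalities on $H_1$ and $H_2$ coordinatewise, since the right-hand side of \eqref{eq:quadratic} is additive over the two metric contributions. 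Geodesics in the product are exactly pairs $\gamma(t) = (\gamma_1(t), \gamma_2(t))$ of (reparametrized constant-speed) geodesics in the factors, so $\Gamma_{(x_1,x_2)}(H)$ is described in terms of $\Gamma_{x_1}(H_1)$ and $\Gamma_{x_2}(H_2)$.

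The key structural fact I would establish next is how projections onto product geodesics decompose. A geodesic $\gamma$ in $H$ starting at $x=(x_1,x_2)$ is in general \emph{not} a product of a point with a geodesic; its two coordinate curves move at fixed proportional speeds. I would therefore bound the projection $P_\gamma$ from below using the coordinate projections: for any $y=(y_1,y_2)$, the quantity $d(x, P_\gamma y)^2$ controls, and is controlled by, the two pieces $d_1(x_1, P_{\gamma_1} y_1)^2$ and $d_2(x_2, P_{\gamma_2} y_2)^2$ up to the speed weights. The precise statement I want is that an elementary set $U_x(\varepsilon;\gamma)$ in $H$ contains a finite intersection of "coordinate" elementary sets, i.e.\ sets of the form $\{y : d_i(x_i, P_{\gamma_i} y_i) < \delta\}$ pulled back along the projection $H \to H_i$. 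This is the technical heart of the argument and the step I expect to be the main obstacle, because a general product geodesic mixes the two coordinates and one must be careful that controlling each coordinate projection suffices to control the product projection.

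Granting that reduction, the conclusion follows cleanly. Fix $x=(x_1,x_2) \in H$, a geodesic $\gamma \in \Gamma_x(H)$, and $\varepsilon>0$; I must produce $V \in \tau_w$ with $x \in V \subseteq U_x(\varepsilon;\gamma)$. By the decomposition step it suffices to find, for each coordinate $i$, a weakly open $V_i$ in $H_i$ containing $x_i$ and sitting inside the coordinate elementary set with a suitable small radius $\delta$. For $i=1$ this is Corollary \ref{c:weaklyproper}, which gives that $H_1$ is weakly proper; for $i=2$ this is Proposition \ref{p:Hilbertweaklyprop}, giving that the Hilbert space $H_2$ is weakly proper. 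Pulling $V_1$ and $V_2$ back to $H$ along the coordinate projections and intersecting produces a set $V$; I would verify that the preimage of a weakly open set under the (weakly continuous) coordinate projection is weakly open in $H$, so that $V \in \tau_w$, and that $V \subseteq U_x(\varepsilon;\gamma)$ by the containment established in the previous paragraph. Since $x$, $\gamma$, and $\varepsilon$ were arbitrary, every point is an interior point of every elementary set, which is exactly weak properness in the sense of Definition \ref{d:weakly-proper}.
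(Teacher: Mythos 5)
Your proposal takes the same route as the paper, whose entire proof is the single sentence that the claim ``follows immediately'' from Corollary \ref{c:weaklyproper} and Proposition \ref{p:Hilbertweaklyprop}; you invoke exactly those two results, and --- more carefully than the paper --- you point out that the reduction is not actually immediate. The one step you explicitly leave conditional, namely that an elementary set $U_x(\varepsilon;\gamma)$ of the product contains a finite intersection of pulled-back coordinate elementary sets, is indeed the crux, and it is the part asserted rather than proved in your write-up (the paper does not supply it either). It does go through: writing $\gamma=(\gamma_1,\gamma_2)$ with $\gamma_i$ of length $L_i$ and assuming first $L_1,L_2>0$, each function $t\mapsto d_i(y_i,\gamma_i(t))^2$ is convex by \eqref{eq:quadratic} with minimizer $s_i$ (so that $P_{\gamma_i}y_i=\gamma_i(s_i)$); their sum is then decreasing on $[0,\min(s_1,s_2)]$ and increasing on $[\max(s_1,s_2),1]$, so its minimizer $t^*$, which gives $P_\gamma y=\gamma(t^*)$, satisfies $t^*\leqslant\max(s_1,s_2)$. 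Hence $d(x,P_\gamma y)=t^*\sqrt{L_1^2+L_2^2}<\varepsilon$ as soon as $d_i(x_i,P_{\gamma_i}y_i)=s_iL_i<\delta$ for $\delta:=\varepsilon\min(L_1,L_2)/\sqrt{L_1^2+L_2^2}$. If instead, say, $L_2=0$, then $P_\gamma(y_1,y_2)=(P_{\gamma_1}y_1,x_2)$ and only the first coordinate is needed. The same computation with one trivial factor also yields the other fact you promise to verify: the preimage under a coordinate projection of an elementary set $U_{z_1}(\delta;\eta)$ in $H_1$ equals the elementary set of the product taken with respect to the ``horizontal'' geodesic $t\mapsto(\eta(t),z_2)$, so preimages of weakly open sets are weakly open. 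With these two observations inserted, your argument is complete; note that it in fact proves the stronger statement that a product of any two weakly proper Hadamard spaces is weakly proper, with local compactness of $H_1$ and Hilbertianity of $H_2$ used only through the cited results.
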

\begin{proof}
	Follows immediately from Corollary \ref{c:weaklyproper} and Proposition \ref{p:Hilbertweaklyprop}.
\end{proof}
For a given $y\in H$  and $z_l\in l$ let $\theta$ be the the angle between the vectors $y-x$ and $z_l-x$. 
Then from the cosine formula for inner product we get $\langle y-x,z_l-x\rangle=\|y-x\|\|z_l-x\|\cos\theta$. Realizing that $\|y-x\|\cos\theta=\pm\|P_{\gamma}y-x\|$ then follows 
\begin{equation}
 \label{eq:cosine1}
 \pm\phi_{[l]}(x;y)=\frac{1}{\|z_l-x\|}\langle y-x,z_l-x\rangle, \hspace{0.2cm}\forall y\in\mathcal{H}
\end{equation}
Using the linearity of the inner product one can rewrite \eqref{eq:cosine1} as 
\begin{equation}
 \label{eq:cosine2}
 \phi_{[l]}(x;y)=\langle y-x, u_{l}\rangle, \hspace{0.2cm}\forall y\in\mathcal{H}\hspace{0.1cm}\text{where}\hspace{0.1cm} u_{l}:=\pm\frac{z_l-x}{\|z_l-x\|}
\end{equation}
From \eqref{eq:cosine2} we see that $H^*_x/\sim$ coincides with the dual $\mathcal{H}^*$ of the Hilbert space $\mathcal{H}$.

\section{The space of geodesic segments}
\label{s:geo-seg}
\subsection{The metric space of geodesic segments}
For each $x\in H$ the collection of geodesic segments $\Gamma_x(H)$ can be turned into a metric space by equipping it with the metric $d_1(\gamma,\eta):=\sup_{t\in[0,1]}d(\gamma(t),\eta(t))$ where $\gamma,\eta\in\Gamma_x(H)$. 
We call $(\Gamma_x(H),d_1)$ the metric space of geodesic segments and $\Gamma(H):=\bigcup_{x\in H}\Gamma_x(H)$ the bundle of geodesic segments in $H$. 
Let $\psi:\Gamma_x(H)\to H$ be the mapping defined as $\psi(\gamma):=\gamma(1)$ for every $\gamma\in\Gamma_x(H)$. 
\begin{theorem}
	\label{globaliso}
	The mapping $\psi$ is a global isometry from $\Gamma_x(H)$ onto $H$ for every $x\in H$. Therefore, the metric space $(\Gamma_x(H),d_1)$ is a Hadamard space for every $x\in H$.
\end{theorem}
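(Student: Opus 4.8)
The plan is to show that $\psi$ is a surjective isometry and then transport the Hadamard structure across it.

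First I would verify that $\psi$ is a bijection. Because $(H,d)$ is a Hadamard space it is uniquely geodesic: for each $y\in H$ the geodesic segment $[x,y]$, parametrised with constant speed on $[0,1]$, is the unique member of $\Gamma_x(H)$ whose endpoint is $y$. Hence $\psi$ is onto, since $y=\psi([x,y])$, and one-to-one, since if $\gamma,\eta\in\Gamma_x(H)$ satisfy $\gamma(1)=\eta(1)$ then, as they also share $\gamma(0)=\eta(0)=x$, uniqueness of geodesics forces $\gamma=\eta$.

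The heart of the matter is then the identity $d_1(\gamma,\eta)=d(\gamma(1),\eta(1))$ for all $\gamma,\eta\in\Gamma_x(H)$. Writing $a:=\gamma(1)$ and $b:=\eta(1)$, so that $\gamma(t)=(1-t)x\oplus t\,a$ and $\eta(t)=(1-t)x\oplus t\,b$, the key estimate is $d(\gamma(t),\eta(t))\leqslant t\,d(a,b)$ for every $t\in[0,1]$. I would obtain this from the quadratic characterisation \eqref{eq:quadratic}: applying it once along $\gamma$ with free point $\eta(t)$ and once along $\eta$ with free point $a$, and using $d(x,\eta(t))=t\,d(x,b)$, the cross terms cancel and the bound collapses to $d(\gamma(t),\eta(t))^2\leqslant t^2 d(a,b)^2$. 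Equivalently, this is immediate from the $\CAT(0)$ inequality \eqref{eq:CAT(0)-ineq} applied to the triangle $\Delta(x\,a\,b)$, since in the Euclidean comparison triangle the two parameter-$t$ comparison points span a segment of length exactly $t\,\|\overline a-\overline b\|$. Consequently $d(\gamma(t),\eta(t))\leqslant t\,d(a,b)\leqslant d(a,b)$ for all $t$, so the supremum defining $d_1$ is dominated by $d(a,b)$; since the value at $t=1$ is precisely $d(a,b)$, the supremum is attained there and $d_1(\gamma,\eta)=d(a,b)=d(\psi(\gamma),\psi(\eta))$.

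Having shown $\psi$ is a surjective isometry, I would conclude that $(\Gamma_x(H),d_1)$ is isometric to $(H,d)$. Since completeness, unique geodesic connectedness, and the $\CAT(0)$ thinness of triangles are all preserved by isometries, the isometric copy $(\Gamma_x(H),d_1)$ is again a complete $\CAT(0)$ space, i.e.\ a Hadamard space, for every $x\in H$. The only genuine subtlety is pinning down that the supremum in $d_1(\gamma,\eta)$ is attained at $t=1$ rather than in the interior of $[0,1]$; this is exactly where non-positive curvature enters, through the monotone bound $d(\gamma(t),\eta(t))\leqslant t\,d(a,b)$. In a space of positive curvature two geodesics issuing from a common point can spread apart and then reconverge, so the far endpoints need not realise the maximal separation and the isometry would fail.
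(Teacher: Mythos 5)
Your proposal is correct and follows essentially the same route as the paper: bijectivity from unique geodesics, and the identity $d_1(\gamma,\eta)=d(\gamma(1),\eta(1))$ via the $\CAT(0)$ comparison-triangle bound $d(\gamma(t),\eta(t))\leqslant t\,d(\gamma(1),\eta(1))$, with the supremum attained at $t=1$. Your explicit verification through the quadratic inequality \eqref{eq:quadratic} is a slightly more detailed rendering of the same convexity estimate the paper invokes, not a different argument.
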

\begin{proof}
	Clearly, $\psi$ is a bijection due to the fact that any two points can be connected by a unique geodesic. It thus suffices to show that $d_1(\gamma,\eta) = d(\gamma(1),\eta(1))$. 
	Consider the comparison triangle $\Delta(\bar{x},\bar{\gamma}(1),\bar{\eta}(1))$. Then the distance $d(\gamma(t), \eta(t))$ is bounded above by the distance between the corresponding points in the comparison triangle, which is in turn is bounded above by the distance between $\bar{\gamma}(1)$ and $\bar{\eta}(1)$. This implies $d(\gamma(t),\eta(t)) \leq d(\gamma(1),\eta(1))$, which proves the claim.  
\end{proof}
Given two geodesics $\gamma_1,\gamma_2\in \Gamma_x(H)$ and $t\in[0,1]$, consider the point $y_t:=(1-t)\gamma_1(1)\oplus t\gamma_2(1)$ and the unique geodesic $\gamma_{t}\in \Gamma_x(H)$ that connects $x$ with $y_t$. We call this geodesic $\gamma_{t}:=(1-t)\gamma_1\oplus t\gamma_2$. Figure \ref{fig:geotriangle} depicts a geodesic segment and a geodesic triangle in $\Gamma_x(H)$.
\begin{figure}
	\centering
	\includegraphics[width=8cm]{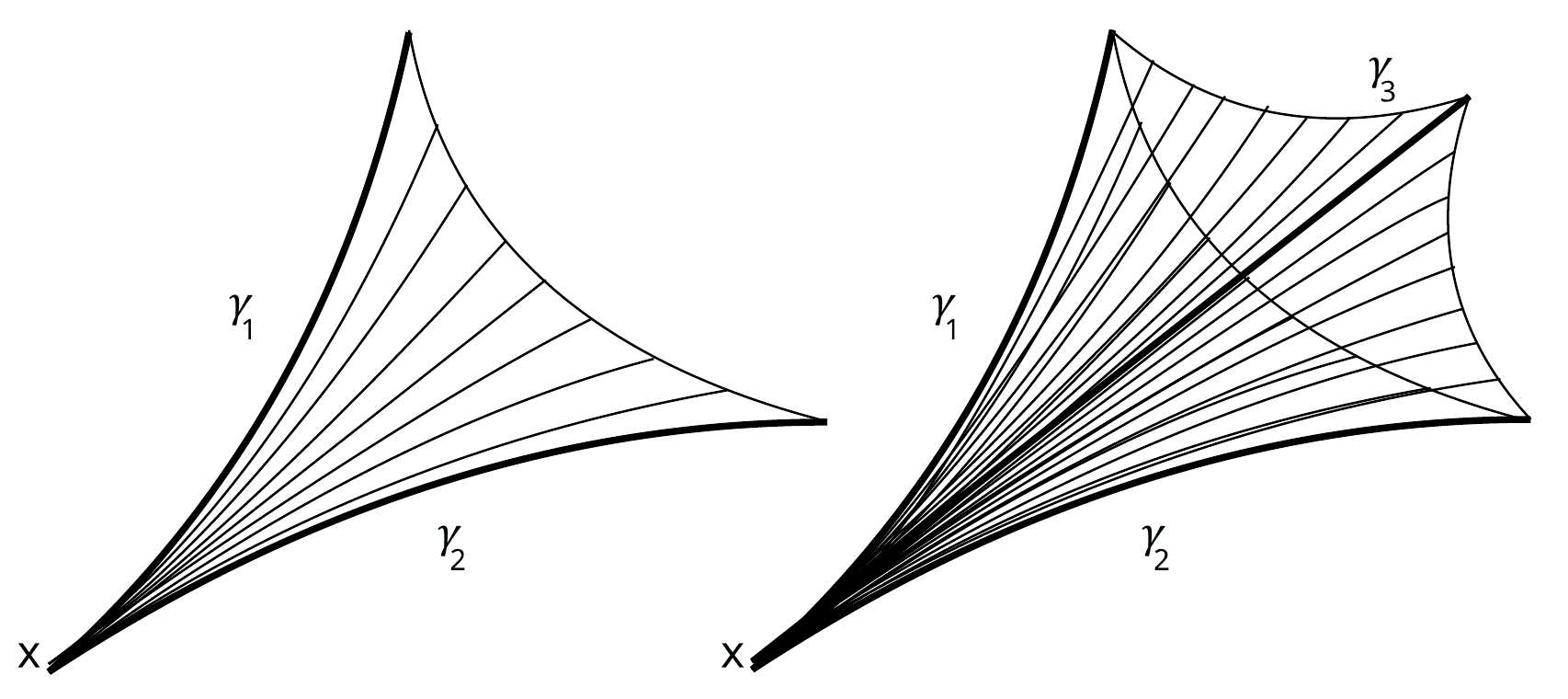} 
	\caption{A geodesic segment in $\Gamma_x(H)$ (left) and a geodesic triangle in $\Gamma_x(H)$ (right).}
	\label{fig:geotriangle}
\end{figure}
\begin{corollary}
	$(\Gamma_x(H),d_1)$ is a complete convex metric space where its convex structure $W$ satisfies $W(\gamma_1,\gamma_2,t)=\gamma_t$ for all $\gamma_1,\gamma_2\in\Gamma_x(H)$ and $t\in[0,1]$.
\end{corollary}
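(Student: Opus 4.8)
The plan is to transport everything through the global isometry $\psi:\Gamma_x(H)\to H$ furnished by Theorem \ref{globaliso}. Completeness is then immediate, since Theorem \ref{globaliso} already records that $(\Gamma_x(H),d_1)$ is a Hadamard space and Hadamard spaces are complete by definition (equivalently, an isometry onto the complete space $H$ carries Cauchy sequences to Cauchy sequences). Thus the only genuine content of the corollary is to exhibit the convex structure $W$ and to verify Takahashi's convexity inequality
\begin{equation*}
d_1(\sigma, W(\gamma_1,\gamma_2,t))\leqslant (1-t)\,d_1(\sigma,\gamma_1)+t\,d_1(\sigma,\gamma_2)
\end{equation*}
for all $\sigma,\gamma_1,\gamma_2\in\Gamma_x(H)$ and $t\in[0,1]$.

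First I would check that $W(\gamma_1,\gamma_2,t):=\gamma_t$ is well-defined and is exactly the pullback along $\psi$ of the geodesic $t\mapsto y_t=(1-t)\gamma_1(1)\oplus t\gamma_2(1)$ in $H$. Indeed $\psi(\gamma_t)=\gamma_t(1)=y_t$, so $\gamma_t=\psi^{-1}(y_t)$ is the unique geodesic in $\Gamma_x(H)$ ending at $y_t$, by unique geodesic connectedness of $H$. Hence $W$ is precisely the convex structure carried over from the geodesic structure of $H$, and it interpolates correctly at the endpoints ($y_0=\gamma_1(1)$, $y_1=\gamma_2(1)$).

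Next I would reduce the convexity inequality to a statement in $H$. Since $\psi$ is an isometry, $d_1(\sigma,\gamma_t)=d(\sigma(1),\gamma_t(1))=d(\sigma(1),y_t)$ and $d_1(\sigma,\gamma_i)=d(\sigma(1),\gamma_i(1))$ for $i=1,2$. The desired inequality therefore becomes the convexity of the distance function $z\mapsto d(\sigma(1),z)$ along the geodesic $y_t$ in $H$, namely
\begin{equation*}
d(\sigma(1),y_t)\leqslant (1-t)\,d(\sigma(1),\gamma_1(1))+t\,d(\sigma(1),\gamma_2(1)).
\end{equation*}

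Finally I would prove this last inequality from the $\CAT(0)$ axiom. Writing $z=\sigma(1)$, $p=\gamma_1(1)$, $q=\gamma_2(1)$ and passing to a comparison triangle $\Delta(\bar z,\bar p,\bar q)$ in $\mathbb{E}^2$, the comparison point of $y_t\in[p,q]$ is $\bar y_t=(1-t)\bar p+t\bar q$; combining the $\CAT(0)$ inequality \eqref{eq:CAT(0)-ineq} (applied to $y_t\in[p,q]$ and the vertex $z$) with the triangle inequality for the Euclidean norm gives
\begin{equation*}
d(z,y_t)\leqslant \|\bar z-\bar y_t\|\leqslant (1-t)\|\bar z-\bar p\|+t\|\bar z-\bar q\|=(1-t)\,d(z,p)+t\,d(z,q).
\end{equation*}
This comparison-triangle step is the only real obstacle, and it warrants care: the bound does \emph{not} follow merely by discarding the term $-t(1-t)d(\cdot,\cdot)^2$ from the quadratic inequality \eqref{eq:quadratic}, because the convexity of $s\mapsto s^2$ runs in the opposite direction and would yield the reverse estimate. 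With the linear convexity of $d$ established as above, the Takahashi inequality follows verbatim through the isometry, completing the proof.
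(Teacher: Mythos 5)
Your proof is correct and follows exactly the route the paper intends: the corollary is stated without proof as an immediate consequence of Theorem \ref{globaliso}, the point being that $\psi$ transports completeness and the geodesic (convex) structure of $H$ to $(\Gamma_x(H),d_1)$, with the Takahashi inequality reducing to convexity of the metric in a $\CAT(0)$ space. Your cautionary remark about the comparison-triangle step is well taken (though one can also recover the linear bound from \eqref{eq:quadratic} by retaining the $-t(1-t)d(\gamma_1(1),\gamma_2(1))^2$ term and invoking the triangle inequality, rather than discarding it).
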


\subsection{Weak convergence in $\Gamma_x(H)$} 
The metric $d_1(\cdot,\cdot)$ induces a strong topology where an open ball of radius $\varepsilon>0$ centered at $\gamma\in\Gamma_x(H)$ is of the form $\mathbb{B}(\gamma,\varepsilon):=\{\eta\in\Gamma_x(H): d_1(\gamma,\eta)<\varepsilon\}$. We can equip the metric space $\Gamma_x(H)$ also with a weak topology in the following way. We say a sequence $(\gamma_n)_{n\in\mathbb{N}}\subseteq \Gamma_x(H)$ weak-$\Gamma$ converges to some element $\gamma\in\Gamma_x(H)$ and we denote it by $\gamma_n\overset{w_{\Gamma}}\to\gamma$ iff $\gamma_n(1)\overset{w}\to \gamma(1)$. We say a set $U\subseteq\Gamma_x(H)$ is weak-$\Gamma$ open if for any $\gamma\in U$ there exists $\varepsilon>0$ and a finite family of geodesics $\gamma_1,...,\gamma_n\in\Gamma_{\gamma(1)}(H)$ such that the set
\begin{equation}
\label{eq:weakgamma}
U_{\gamma}(\varepsilon;\gamma_1,\gamma_2,...,\gamma_n):=\{\eta\in\Gamma_x(H): d(\gamma(1),P_{\gamma_i}\eta(1))<\varepsilon,\forall i\},\hspace{0.2cm}\gamma_i\in\Gamma_{\gamma(1)}(H), n\in\mathbb{N}
\end{equation}
is in $U$. The collection of such sets $U$ indeed defines a topology in $\Gamma_x(H)$. We wish to call it weak-$\Gamma$ topology and denote it by $\tau_{w_{\Gamma}}$. Weak-$\Gamma$ convergence implies convergence in weak-$\Gamma$ topology in the same way as weak convergence implies convergence in the weak topology for the underlying Hadamard space $(H,d)$.
Using the isometry $\psi$ defined in the previous section we can rewrite the definition of weak-$\Gamma$ convergence as $\gamma_n\overset{w_{\Gamma}}\to\gamma$ iff $\psi(\gamma_n)\overset{w}\to\psi(\gamma)$. Similarly $\gamma_n\overset{\tau_{w_{\Gamma}}}\to\gamma$ iff $\psi(\gamma_n)\overset{\tau_w}\to\psi(\gamma)$. The mapping $\psi$ enjoys also the property of being a homeomorphism between the topological spaces $(\Gamma_x(H),\tau_{w_{\Gamma}})$ and $(H,\tau_w)$.
\begin{proposition}
	\label{homeomorphic}
	The topological spaces $(\Gamma_x(H),\tau_{w_{\Gamma}})$ and $(H,\tau_w)$ are homeomorphic.
\end{proposition}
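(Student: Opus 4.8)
The plan is to show that the bijection $\psi$ from Theorem \ref{globaliso} is a homeomorphism between $(\Gamma_x(H),\tau_{w_\Gamma})$ and $(H,\tau_w)$ by checking that it carries the defining neighbourhood bases of the two topologies onto one another. The crucial observation, which does essentially all the work, is that the elementary weak-$\Gamma$ set appearing in \eqref{eq:weakgamma} is precisely the $\psi$-preimage of the corresponding elementary weak set in $H$. Indeed, for $\gamma\in\Gamma_x(H)$, $\varepsilon>0$ and $\gamma_1,\dots,\gamma_n\in\Gamma_{\gamma(1)}(H)$, an element $\eta\in\Gamma_x(H)$ lies in $U_\gamma(\varepsilon;\gamma_1,\dots,\gamma_n)$ exactly when $d(\gamma(1),P_{\gamma_i}\eta(1))<\varepsilon$ for all $i$, which by $\psi(\eta)=\eta(1)$ is the same as the condition $\psi(\eta)\in U_{\gamma(1)}(\varepsilon;\gamma_1,\dots,\gamma_n)$ in the sense of \eqref{eq:openset}. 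Hence
\[
U_\gamma(\varepsilon;\gamma_1,\dots,\gamma_n)=\psi^{-1}\big(U_{\gamma(1)}(\varepsilon;\gamma_1,\dots,\gamma_n)\big),
\]
and, $\psi$ being a bijection, $\psi$ maps this elementary weak-$\Gamma$ set onto the elementary weak set $U_{\gamma(1)}(\varepsilon;\gamma_1,\dots,\gamma_n)$.

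With this identity in hand I would prove the two-sided statement that a set $V\subseteq H$ is weakly open if and only if $\psi^{-1}(V)$ is weak-$\Gamma$ open; this single equivalence yields at once the continuity of $\psi$ and of $\psi^{-1}$, and therefore the homeomorphism. For the forward direction, given $V\in\tau_w$ and $\gamma\in\psi^{-1}(V)$, I would invoke weak-openness of $V$ at the point $\psi(\gamma)=\gamma(1)$ to obtain $\varepsilon>0$ and $\gamma_1,\dots,\gamma_n\in\Gamma_{\gamma(1)}(H)$ with $U_{\gamma(1)}(\varepsilon;\gamma_1,\dots,\gamma_n)\subseteq V$; the displayed identity then gives $U_\gamma(\varepsilon;\gamma_1,\dots,\gamma_n)\subseteq\psi^{-1}(V)$, so $\psi^{-1}(V)$ is weak-$\Gamma$ open. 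For the converse, given a weak-$\Gamma$ open set $\psi^{-1}(V)$ and a point $z\in V$, I would use bijectivity of $\psi$ to write $z=\psi(\gamma)$ with $\gamma\in\psi^{-1}(V)$, extract an elementary weak-$\Gamma$ neighbourhood contained in $\psi^{-1}(V)$, and push it forward under $\psi$ to obtain an elementary weak neighbourhood $U_z(\varepsilon;\gamma_1,\dots,\gamma_n)\subseteq V$, whence $V\in\tau_w$.

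Since this is a transport-of-structure argument, there is no genuine analytic obstacle; the only point demanding care is the bookkeeping of quantifiers and base points. Specifically, the geodesics $\gamma_1,\dots,\gamma_n$ defining a weak-$\Gamma$ neighbourhood of $\gamma$ are required to emanate from $\gamma(1)=\psi(\gamma)$, which is exactly the base point at which the corresponding weak neighbourhood in $H$ is centred. This matching of base points under $\psi$ is what makes the preimage identity hold verbatim, and it is the one place where one should verify that the definitions \eqref{eq:openset} and \eqref{eq:weakgamma} line up. Everything else then follows formally from the general fact that a bijection inducing a bijection between (sub)bases of two topologies is automatically a homeomorphism.
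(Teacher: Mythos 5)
Your proof is correct, and it takes a genuinely different route from the paper. The paper argues via convergence of sequences: it observes that $\gamma_n\overset{\tau_{w_\Gamma}}\to\gamma$ holds by definition exactly when $\psi(\gamma_n)\overset{\tau_w}\to\psi(\gamma)$, and concludes continuity of $\psi$ and $\psi^{-1}$ from this sequential equivalence. You instead work directly with the open sets, establishing the identity $U_\gamma(\varepsilon;\gamma_1,\dots,\gamma_n)=\psi^{-1}\bigl(U_{\gamma(1)}(\varepsilon;\gamma_1,\dots,\gamma_n)\bigr)$ between the elementary sets of \eqref{eq:weakgamma} and \eqref{eq:openset} and then transporting weak openness back and forth through $\psi$. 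Your version is arguably the more robust of the two: deducing continuity from sequential continuity is only automatic in first-countable spaces (or after upgrading to nets), and neither $\tau_w$ nor $\tau_{w_\Gamma}$ is known to be first countable here, so the paper's sequential argument leaves a small gap that your set-level argument closes for free. What the paper's approach buys is brevity and a direct link to the notion of weak-$\Gamma$ convergence that motivates the topology; what yours buys is a clean, quantifier-explicit verification that $\psi$ induces a bijection between the neighbourhood bases, from which the homeomorphism follows formally. The one point you rightly flag --- that the geodesics $\gamma_1,\dots,\gamma_n$ in \eqref{eq:weakgamma} emanate from $\gamma(1)=\psi(\gamma)$, which is precisely the base point of the corresponding set in \eqref{eq:openset} --- is indeed the only place where the two definitions must be checked to line up, and they do.
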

\begin{proof}
	It suffices to prove that the mapping $\psi$ is a bicontinuous function i.e. it is bijective and it has a continuous inverse $\psi^{-1}$. Bijection follows from Proposition \ref{globaliso} since $\psi$ is a global isometry. Now let $\gamma_n\overset{\tau_{w_{\Gamma}}}\to\gamma$ then by definition $\psi(\gamma_n)\overset{\tau_w}\to \psi(\gamma)$ hence $\psi$ is continuous. For the other direction take any sequence $(y_n)_{n\in\mathbb{N}}\subseteq H$ such that $y_n\overset{\tau_w}\to y$ for some $y\in H$. Since $H$ is a uniquely geodesic space for each $y_n$ there is a unique $\gamma_n\in\Gamma_x(H)$ connecting it to $x$. Similarly let $\gamma\in \Gamma_x(H)$ be the geodesic segment connecting $y$ with $x$. Then $y_n\overset{\tau_w}\to y$ means $\gamma_n(1)\overset{\tau_w}\to\gamma(1)$ or equivalently $\psi(\gamma_n)\overset{\tau_w}\to \psi(\gamma)$. By definition of weak-$\Gamma$ topology then $\gamma_n\overset{\tau_{w_{\Gamma}}}\to \gamma$ or equivalently $\psi^{-1}(y_n)\overset{\tau_{w_{\Gamma}}}\to\psi^{-1}(y)$. Hence $\psi^{-1}$ is a continous mapping. 
\end{proof}
Thus $(\Gamma_x(H),\tau_{w_{\Gamma}})$ and $(H,\tau_w)$ are topologically indistinguishable. As a direct result of this we have the following theorem:
\begin{theorem}
	\label{weakgamma}
	The following statements are true:
	\begin{enumerate}[(i)]
		\item $(\Gamma_x(H),\tau_{w_{\Gamma}})$ is Hausdorff whenever $(H,d)$ is weakly proper.
		\item Any bounded sequence has a weak-$\Gamma$ convergent subsequence.
		\item A closed convex set in $\Gamma_x(H)$ is weak-$\Gamma$ sequentially closed. If additionally the set is also bounded then it is weak-$\Gamma$ sequentially compact.
		\item A bounded closed convex set in $\Gamma_x(H)$ is weak-$\Gamma$ compact whenever $H$ is separable.
	\end{enumerate}
	
\end{theorem}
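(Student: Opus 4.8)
The plan is to exploit the fact that the map $\psi$ is simultaneously a global isometry from $(\Gamma_x(H),d_1)$ onto $(H,d)$ (Theorem \ref{globaliso}) and a homeomorphism between $(\Gamma_x(H),\tau_{w_{\Gamma}})$ and $(H,\tau_w)$ (Proposition \ref{homeomorphic}). Each of the four assertions concerns a property preserved under one of these two incarnations of $\psi$, so the strategy throughout is simply to transport the corresponding statement, already established for $H$, across $\psi$. Before doing so I would record two structural facts. First, because $\psi$ is an isometry it carries bounded sets to bounded sets and strongly closed sets to strongly closed sets, and it preserves separability. Second, from the very definition of the convex combination $\gamma_t=(1-t)\gamma_1\oplus t\gamma_2$ one has $\psi(\gamma_t)=(1-t)\psi(\gamma_1)\oplus t\psi(\gamma_2)$, so both $\psi$ and $\psi^{-1}$ send geodesically convex sets to geodesically convex sets.

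For (i), Hausdorffness is a purely topological property invariant under homeomorphisms, so it passes from $(H,\tau_w)$ — which is Hausdorff whenever $H$ is weakly proper by Lemma \ref{wtopHaus} — to $(\Gamma_x(H),\tau_{w_{\Gamma}})$. For (ii), given a bounded sequence $(\gamma_n)\subseteq\Gamma_x(H)$, the image $(\psi(\gamma_n))=(\gamma_n(1))$ is bounded in $H$, so Lemma \ref{l:Bacak} furnishes a weakly convergent subsequence $\gamma_{n_k}(1)\overset{w}\to y$; setting $\gamma:=\psi^{-1}(y)$ and invoking the definition $\gamma_n\overset{w_{\Gamma}}\to\gamma$ iff $\psi(\gamma_n)\overset{w}\to\psi(\gamma)$ yields $\gamma_{n_k}\overset{w_{\Gamma}}\to\gamma$.

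For (iii), let $K\subseteq\Gamma_x(H)$ be closed and convex. By the two structural facts above $\psi(K)$ is closed and convex in $H$, so Lemma \ref{l:Bacak2} applies to $\psi(K)$: the weak limit of any weakly convergent sequence in $\psi(K)$ again lies in $\psi(K)$. Pulling this back through $\psi^{-1}$ shows $K$ is weak-$\Gamma$ sequentially closed. If $K$ is in addition bounded, then combining this with (ii) — every sequence in $K$ has a weak-$\Gamma$ convergent subsequence whose limit must lie in the sequentially closed set $K$ — gives weak-$\Gamma$ sequential compactness. Finally for (iv), if $H$ is separable then so is $\Gamma_x(H)$, and $\psi(K)$ is a closed bounded convex set in a separable Hadamard space, hence weakly compact by the corollary immediately following Corollary \ref{c:EberleinSmulyan}; since $\psi$ is a homeomorphism for the weak topologies, $K=\psi^{-1}(\psi(K))$ is weak-$\Gamma$ compact.

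The argument is essentially bookkeeping once the dual role of $\psi$ is in hand; the only point I expect to require genuine care, and the one I would verify explicitly, is that $\psi$ respects the convex structures, since parts (iii) and (iv) hinge on $\psi(K)$ being honestly convex in the geodesic sense of $H$. This is immediate from $\psi(\gamma_t)=(1-t)\gamma_1(1)\oplus t\gamma_2(1)$, but it is precisely the hinge on which the transfer of Lemma \ref{l:Bacak2} and of the separable weak-compactness corollary rests, so I would state it as a preliminary observation rather than leave it implicit.
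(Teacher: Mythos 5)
Your proposal is correct and follows exactly the route the paper intends: the paper states Theorem \ref{weakgamma} without proof as a direct consequence of the isometry of Theorem \ref{globaliso} and the homeomorphism of Proposition \ref{homeomorphic}, and your argument is just a careful transport of Lemmas \ref{wtopHaus}, \ref{l:Bacak}, \ref{l:Bacak2} and the separable weak-compactness corollary across $\psi$. Your explicit verification that $\psi$ respects the convex structure, via $\psi(\gamma_t)=(1-t)\gamma_1(1)\oplus t\gamma_2(1)$, is a worthwhile addition that the paper leaves implicit.
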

Note that in case $H$ is locally compact then so is the space $\Gamma_x(H)$ and weak-$\Gamma$ topology coincides with the metric topology. In particular any closed and bounded set in $\Gamma_x(H)$ is compact and any bounded sequence $(\gamma_n)_{n\in\mathbb{N}}\subseteq\Gamma_x(H)$ has a convergent subsequence.

\subsection{Existence of the steepest descent} Denote by $e_x$ the trivial geodesic (of zero length) emanating from $x$. Denote by $\Gamma^c_x(H)=\Gamma_x(H)\setminus\{e_x\}$.
A function $f:H\to\mathbb{R}$ is said to be geodesically differentiable at $x\in H$ along geodesic $\gamma\in\Gamma^c_x(H)$ if the following limit exists
\begin{equation}
\label{eq:geodiff}
f'(x;\gamma):=\lim_{y\overset{\gamma}\to x}\frac{f(y)-f(x)}{d(y,x)}.
\end{equation}
In case the limit in \eqref{eq:geodiff} exists for all $\gamma\in \Gamma^c_x(H)$ then we simply say that $f$ is geodesically differentiable at $x$. Note that this limit could vary from one geodesic to another and that $f'(x;\gamma)=f'(x;\eta)$ whenever $\eta\subseteq\gamma$.
Denote by $\overline{\mathbb{B}}(e_x)$ and $\mathbb S(e_x)$ the closed geodesic unit ball and the geodesic unit sphere respectively in $\Gamma_x(H)$ centered at $e_x$. A geodesic segment $\gamma_{\min}\in\mathbb S(e_x)$ of positive length is said to be the direction of steepest descent of the function $f$ at $x\in H$ if $f'(x;\gamma_{\min})=\inf_{\gamma\in\mathbb S(e_x)}f'(x;\gamma)$. Likewise a geodesic segment $\gamma_{\max}\in\mathbb S(e_x)$ is said to be the direction of steepest ascent of the function $f$ at $x\in H$ if $f'(x;\gamma_{\max})=\sup_{\gamma\in\mathbb S(e_x)}f'(x;\gamma)$. 
\begin{theorem}[Consequence of Extreme Value  Theorem]
	\label{descentloccompact}
	Let $(H,d)$ be locally compact and $f:H\to\mathbb{R}$ be a geodesically differentiable function such that $f'(x;\gamma)$ is continuous in $\gamma\in\Gamma^c_x(H)$ for each $x\in H$. Then $f'(x;\gamma)$ is bounded on closed bounded sets in $\Gamma^c_x(H)$ and there are $\gamma_{\max},\gamma_{\min}\in \mathbb S(e_x)$ such that $f'(x;\gamma_{\max})=\sup_{\gamma\in\mathbb S(e_x)}f'(x;\gamma)$ and $f'(x;\gamma_{\min})=\inf_{\gamma\in\mathbb S(e_x)}f'(x;\gamma)$.
\end{theorem}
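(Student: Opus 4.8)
The plan is to reduce the statement to the classical Extreme Value Theorem by exploiting the isometry between $\Gamma_x(H)$ and $H$ furnished by Theorem \ref{globaliso}. The map $\psi:\Gamma_x(H)\to H$, $\psi(\gamma)=\gamma(1)$, is a surjective isometry, so $(\Gamma_x(H),d_1)$ inherits every metric property of $(H,d)$; in particular, since $H$ is locally compact, so is $\Gamma_x(H)$. Moreover, because $e_x(t)=x$ for all $t$, one has $d_1(\gamma,e_x)=\sup_{t\in[0,1]}d(\gamma(t),x)=d(\gamma(1),x)$, so under $\psi$ the unit sphere $\mathbb S(e_x)$ corresponds exactly to the metric sphere $\{y\in H:d(x,y)=1\}$, and similarly $\overline{\mathbb B}(e_x)$ to the closed ball $\overline{\mathbb B}(x,1)$.

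First I would invoke Theorem \ref{theorem:HopfRinow}: in a locally compact Hadamard space every closed bounded set is compact. The sphere $\mathbb S(e_x)$ is closed (it is the preimage of $\{1\}$ under the continuous function $\gamma\mapsto d_1(\gamma,e_x)$) and bounded, hence compact; crucially it is contained in $\Gamma^c_x(H)$, since every $\gamma\in\mathbb S(e_x)$ has positive length and therefore $\gamma\neq e_x$. Consequently the map $\gamma\mapsto f'(x;\gamma)$, continuous on all of $\Gamma^c_x(H)$ by hypothesis, is in particular continuous on the compact set $\mathbb S(e_x)$.

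Then I would apply the Extreme Value Theorem directly: a continuous real-valued function on a nonempty compact set attains its supremum and its infimum. This produces $\gamma_{\max},\gamma_{\min}\in\mathbb S(e_x)$ with $f'(x;\gamma_{\max})=\sup_{\gamma\in\mathbb S(e_x)}f'(x;\gamma)$ and $f'(x;\gamma_{\min})=\inf_{\gamma\in\mathbb S(e_x)}f'(x;\gamma)$. The boundedness assertion follows by the same mechanism: any bounded set $K\subseteq\Gamma^c_x(H)$ that is closed in $\Gamma_x(H)$ is compact by Hopf--Rinow and avoids $e_x$, so $f'(x;\cdot)|_{K}$ is a continuous function on a compact set and hence bounded.

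The argument is essentially a packaging exercise, so I do not expect a serious obstacle; the only content is recognizing that the isometry $\psi$ transports local compactness (and the Hopf--Rinow equivalence) to $\Gamma_x(H)$, which is precisely what makes the unit sphere genuinely compact. The one place requiring care is checking that $\mathbb S(e_x)$ lies entirely inside the domain $\Gamma^c_x(H)$ on which $f'(x;\cdot)$ is defined and continuous; this is automatic because unit-length geodesics are nontrivial, and one may additionally use the remark that $f'(x;\gamma)=f'(x;\eta)$ whenever $\eta\subseteq\gamma$ to see that the values on the sphere already determine those on shorter geodesics. Implicitly one also assumes $\mathbb S(e_x)\neq\emptyset$, i.e.\ that $H$ contains a point at distance $1$ from $x$.
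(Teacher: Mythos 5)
Your proposal is correct and follows essentially the same route as the paper: transport local compactness to $(\Gamma_x(H),d_1)$ via the isometry $\psi$, use the Hopf--Rinow equivalence (Theorem \ref{theorem:HopfRinow}) to get compactness of closed bounded sets and of $\mathbb S(e_x)$, and then apply the Extreme Value Theorem; the paper merely phrases the boundedness part as a sequential contradiction argument rather than quoting ``continuous on compact implies bounded'' directly. Your added remarks --- that $\mathbb S(e_x)$ must be read as closed in $\Gamma_x(H)$ and assumed nonempty, and that it avoids $e_x$ --- are reasonable points of care that the paper leaves implicit.
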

\begin{proof}
	Let $U\subseteq \Gamma^c_x(H)$ be some arbitrary closed bounded set and assume without loss of generality that $f'(x;\gamma)$ is not bounded from above on $U$. Then there is some sequence $(\gamma_n)_{n\in\mathbb{N}}\subseteq U$ such that $\lim_nf'(x;\gamma_n)=+\infty$. On the other hand $\Gamma_x(H)$ is locally compact since $H$ is. This implies that $U$ is compact because it is closed and bounded thus $(\gamma_n)_{n\in\mathbb{N}}$ has a convergent subsequence $(\gamma_{n_k})_{k\in\mathbb{N}}$. Let $\lim_k\gamma_{n_k}=\gamma$ then $\gamma\in U$. Assumption $f'(x;\gamma)$ is continuous in $\gamma$ implies $\lim_kf'(x;\gamma_{n_k})=f'(x;\gamma)$ but this contradicts $\lim_nf'(x;\gamma_n)=+\infty$. Therefore $f'(x;\gamma)$ must be bounded from above on $U$. Analogue arguments for boundedness from below. Now $\mathbb S(e_x)=\psi^{-1}(\mathbb S(x))$ and by Proposition \ref{homeomorphic} $\psi$ is a homeomorphism so $\mathbb S(e_x)$ is compact. By extreme value theorem, since $f'(x;\gamma)$ is continuous in $\gamma$, there are $\gamma_{\max},\gamma_{\min}\in \mathbb S(e_x)$ such that $f'(x;\gamma_{\max})=\sup_{\gamma\in\mathbb S(e_x)}f'(x;\gamma)$ and $f'(x;\gamma_{\min})=\inf_{\gamma\in\mathbb S(e_x)}f'(x;\gamma)$. This completes the proof.
\end{proof}

\begin{example}
	Let $H=\mathbb R$ and $f(x)=x$ for all $x\in\mathbb R$. Then $\Gamma_x(\mathbb R)=\{\gamma:[0,1]\to\mathbb R\,:\,\gamma(1)\geqslant x\}\cup\{\gamma:[0,1]\to\mathbb R\,:\,\gamma(1)< x\}$, note that $e_x=x$. For every $\gamma\in\Gamma^c_x(\mathbb R)$ it follows that $f'(x;\gamma)=1$ if $\gamma(1)>x$ and $f'(x;\gamma)=-1$ if $\gamma(1)<x$. Let $(\gamma_n)_{n\in\mathbb N}\subseteq \Gamma^c_x(\mathbb R)$ such that $\gamma_n\to\gamma\in \Gamma^c_x(\mathbb R)$, i.e. $\lim_{n\to \infty}d_1(\gamma,\gamma_n)=0$. Suppose w.l.o.g. that $\gamma(1)>x$. This implies $f'(x;\gamma)=1$. By definition of $d_1(\cdot,\cdot)$ we have that $\gamma_n(1)>x$ for all large enough $n$. Hence $f'(x;\gamma_n)=1$ for all large enough $n$. Similarly when $\gamma(1)<x$. Therefore $f'(x;\gamma)$ is continuous in $\gamma\in\Gamma_x(\mathbb R)$ for every $x\in\mathbb R$. Clearly $f$ is geodesically differentiable at every $x\in\mathbb R$. Trivially by Theorem \ref{descentloccompact}, on $\mathbb S(e_x)$ we have that $\gamma_{\max}=[x,x+1]$ and $\gamma_{\min}=[x-1,x]$.
\end{example}	

 By convention we set 
\begin{equation}
\label{eq:trivial-geo}
f'(x;e_x):=\inf_{
	(\gamma_n)_{n\in\mathbb N}\subset\Gamma^c_x(H)}\liminf_{\gamma_n\to e_x}f'(x;\gamma_n).
\end{equation}
First note that if $f'(x;\gamma)$ is lower semicontinuous in $\gamma$ on $\Gamma^c_x(H)$ then it is so on all of $\Gamma_x(H)$ for every $x\in H$. Indeed from \eqref{eq:trivial-geo} we have that $f'(x;e_x)\leqslant \liminf_{\gamma_n\to e_x}f'(x;\gamma_n)$ for any sequence $(\gamma_n)_{n\in\mathbb N}\subset \Gamma^c_x(H)$.

\begin{example} Consider the Hadamard space $(H,d)$ given by $$H:=\{x\in\mathbb{R}^2\,:\,0\leqslant x_1\leqslant 1, 0\leqslant x_2\leqslant 1\}\cup\{x\in\mathbb{R}^2\,:\,-1\leqslant x_1\leqslant 0, -1\leqslant x_2\leqslant 0\}$$ equipped with the length metric $d$.  Let $f:H\to\mathbb{R}$ be given by $f(y)=0$ for $y\in \{x\in\mathbb{R}^2\,:\,-1\leqslant x_1\leqslant 0, -1\leqslant x_2\leqslant 0\}\setminus\{0\}$, $f(y)=c$ for $y\in\{x\in\mathbb R^2\,:\, x_1=0,\;x_2\geqslant 0\}$ where $c$ is some constant, and $f(y):=\|y\|$ otherwise. We demonstrate that $f'(x;\gamma)$ is lower semicontinuous on $\Gamma_x(H)$ for every $x\in H$. It suffices to check on the non-negative quadrant since elsewhere $f$ is identically zero. First let $x\in H$ with $x_1>0$ and $x_2\geqslant 0$. Take $\gamma\in\Gamma_x^c(H)$, then after appropriate simplifications we obtain that 
	\begin{equation}
	\label{eq:derivative}
	f'(x;\gamma)=\frac{\langle\gamma(1)-x,x\rangle}{\|\gamma(1)-x\|\cdot\|x\|}.
	\end{equation}
	Evidently $f'(x;\gamma)$ is continuous in $\gamma\in\Gamma_x^c(H)$ for every such $x\in H$, and by the earlier observation on $f'(x;e_x)$ it follows that $f'(x;\gamma)$ is lower semicontinuous on $\Gamma_x(H)$ for every $x\in H$ with $x_1>0$ and $x_2\geqslant 0$. Lastly we check for $x\in H$ with $x_1=0$ and $x_2\geqslant 0$. If $\gamma\in\Gamma_x(H)$ lies in the ordinate axis then $f'(x;\gamma)=0$ for all $x\neq 0$, i.e. $x_1=0, x_2>0$, since $f$ is identically a constant there. Otherwise formula \eqref{eq:derivative} still holds true. Again $f'(x;\gamma)$ is lower semicontinuous on $\Gamma_x(H)$ for all such $x$. The last case is when $x=0$, i.e. $x_1=x_2=0$, then direct calculations show that $f'(0;\gamma)=0$ for any $\gamma$ contained in the ordinate axis, else $f'(0;\gamma)=1$. All in all we see that $f'(x;\gamma)$ is lower semicontinuous on $\Gamma_x(H)$ for every $x\in H$. 
\end{example}

We say a function $g:\Gamma_x(H)\to\mathbb{R}$ is convex whenever $g(\gamma_{t})\leqslant (1-t)g(\gamma_1)+t g(\gamma_2)$ for any $\gamma_1,\gamma_2\in\Gamma_x(H)$ and $t\in[0,1]$ where $\gamma_{t}$ is the convex combination of $\gamma_1$ and $\gamma_2$. In case $(H,d)$ is not locally compact then a similar result holds if we additionally assume convexity of $f'(x;\gamma)$ in $\gamma$.
\begin{theorem}
	\label{descentgeneral}
	Let $(H,d)$ be a Hadamard space and $f:H\to\mathbb{R}$ be a geodesically differentiable function such that $f'(x;\gamma)$ is convex and lower semicontinuous in $\gamma\in\Gamma_x(H)$ for each $x\in H$. Suppose further that $f'(x;e_x)>-\infty$. Then $f'(x;\gamma)$ is bounded from below on bounded subsets of $\Gamma_x(H)$. Moreover there exists $\gamma_{\min}\in\overline{\mathbb B}(e_x)$ such that $f'(x;\gamma_{\min})=\inf_{\gamma\in\overline{\mathbb B}(e_x)}f'(x;\gamma)$.

\end{theorem}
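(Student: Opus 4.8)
The plan is to recognize that everything takes place in the Hadamard space $(\Gamma_x(H),d_1)$ supplied by Theorem \ref{globaliso}, and to reduce the statement to a direct-method argument for the function $g:\Gamma_x(H)\to\mathbb{R}\cup\{+\infty\}$ defined by $g(\gamma):=f'(x;\gamma)$. First I would observe that $g$ never takes the value $-\infty$: on $\Gamma^c_x(H)$ the quantity $f'(x;\gamma)$ is a limit of difference quotients that exists by geodesic differentiability, hence is finite, while at the single point $e_x$ the bound $f'(x;e_x)>-\infty$ is exactly the hypothesis (recalling the convention \eqref{eq:trivial-geo}). With this, the two assertions become: (a) $g$ is bounded below on bounded subsets of $\Gamma_x(H)$; and (b) $g$ attains its infimum over the closed unit ball $\overline{\mathbb B}(e_x)$, which is a bounded, strongly closed, and (by convexity of $\gamma\mapsto d_1(\gamma,e_x)$, a consequence of \eqref{eq:quadratic}) convex subset of $\Gamma_x(H)$.

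The crux is to upgrade strong lower semicontinuity of $g$ to weak-$\Gamma$ sequential lower semicontinuity. For each $\alpha\in\mathbb{R}$ the sublevel set $S_\alpha:=\{\gamma\in\Gamma_x(H):g(\gamma)\leqslant\alpha\}$ is convex, because $g$ is convex, and strongly closed, because $g$ is lower semicontinuous. Since $(\Gamma_x(H),d_1)$ is itself a Hadamard space, Lemma \ref{l:Bacak2} applies to $S_\alpha$ and shows it is weak-$\Gamma$ sequentially closed. A standard subsequence argument then yields the desired semicontinuity: if $\gamma_n\overset{w_{\Gamma}}\to\gamma$ and $\beta:=\liminf_n g(\gamma_n)<+\infty$, then for every $\alpha>\beta$ infinitely many $\gamma_n$ lie in $S_\alpha$, this subsequence still weak-$\Gamma$ converges to $\gamma$, so $\gamma\in S_\alpha$ and $g(\gamma)\leqslant\alpha$; letting $\alpha\downarrow\beta$ gives $g(\gamma)\leqslant\liminf_n g(\gamma_n)$.

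With this in hand the direct method closes both claims. For (a), suppose $g$ were unbounded below on some bounded set; choosing $(\gamma_n)$ there with $g(\gamma_n)\to-\infty$, boundedness together with Lemma \ref{l:Bacak} (applied in the Hadamard space $\Gamma_x(H)$) furnishes a weak-$\Gamma$ convergent subsequence $\gamma_{n_k}\overset{w_{\Gamma}}\to\gamma^{\ast}$, whence weak-$\Gamma$ lower semicontinuity forces $g(\gamma^{\ast})\leqslant\liminf_k g(\gamma_{n_k})=-\infty$, contradicting that $g$ never equals $-\infty$. For (b), set $m:=\inf_{\gamma\in\overline{\mathbb B}(e_x)}g(\gamma)$, which is finite by (a) and by the finiteness of $g$ on $\Gamma^c_x(H)$. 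Take a minimizing sequence $(\gamma_n)\subseteq\overline{\mathbb B}(e_x)$ with $g(\gamma_n)\to m$; it is bounded, so by Lemma \ref{l:Bacak} it has a weak-$\Gamma$ convergent subsequence $\gamma_{n_k}\overset{w_{\Gamma}}\to\gamma_{\min}$. Because $\overline{\mathbb B}(e_x)$ is closed and convex, Lemma \ref{l:Bacak2} gives $\gamma_{\min}\in\overline{\mathbb B}(e_x)$, and weak-$\Gamma$ lower semicontinuity gives $g(\gamma_{\min})\leqslant\liminf_k g(\gamma_{n_k})=m$; since also $g(\gamma_{\min})\geqslant m$, equality holds, which is exactly $f'(x;\gamma_{\min})=\inf_{\gamma\in\overline{\mathbb B}(e_x)}f'(x;\gamma)$.

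The main obstacle I anticipate is precisely the passage from strong to weak lower semicontinuity in the second paragraph: in a Hilbert space one would invoke a supporting affine minorant of the convex function, but no such linear machinery is available here, and the argument must instead route through weak sequential closedness of strongly closed convex sets (Lemma \ref{l:Bacak2}, equivalently Theorem \ref{wtopcvx}). A secondary point to verify carefully is that Lemma \ref{l:Bacak} and Lemma \ref{l:Bacak2}, stated for $H$, transfer verbatim to $\Gamma_x(H)$; this is legitimate because $(\Gamma_x(H),d_1)$ is a Hadamard space by Theorem \ref{globaliso}, and weak-$\Gamma$ convergence is, by its definition through the isometry $\psi$, precisely weak convergence in that space.
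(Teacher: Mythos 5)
Your proposal is correct and follows essentially the same route as the paper: both pass through convexity and strong closedness of the sublevel sets of $\gamma\mapsto f'(x;\gamma)$, upgrade to weak-$\Gamma$ sequential closedness via the weak closedness of closed convex sets (your direct use of Lemmas \ref{l:Bacak} and \ref{l:Bacak2} in the Hadamard space $(\Gamma_x(H),d_1)$ is exactly what Theorem \ref{weakgamma}(ii)--(iii) packages), and then run the direct method on a bounded sequence for the lower bound and on a minimizing sequence in $\overline{\mathbb B}(e_x)$ for attainment. Your spelled-out subsequence argument deriving weak-$\Gamma$ sequential lower semicontinuity from sequential closedness of sublevel sets is a welcome expansion of a step the paper only asserts.
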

\begin{proof}
	Let $f$ be geodesically differentiable. Then $f'(x;\gamma)$ exists and it is well defined for all $x\in H$ and $\gamma\in\Gamma^c_x(H)$.  Consider the sub-level set $\lev_{\alpha}:=\{\gamma\in\Gamma_x(H): f'(x;\gamma)\leqslant \alpha\}$ where $\alpha\in\mathbb{R}$. The assumption that $f'(x;\gamma)$ is convex in $\gamma$ implies that $\lev_{\alpha}$ is a convex set. Moreover, since $f'(x;\gamma)$ is lower semicontinuous in $\gamma$, the sub-level sets $\lev_{\alpha}$ are closed.
	By Theorem \ref{weakgamma} (iii) we obtain that $\lev_{\alpha}$ is weak-$\Gamma$ sequentially closed and consequently $f'(x;\gamma)$ is weak-$\Gamma$ sequentially lower semicontinuous in $\gamma$. 
	
	Suppose that $f'(x;\gamma)$ is not bounded from below on bounded subsets of $\Gamma_x(H)$. Then there exists some bounded sequence $(\gamma_n)_{n\in\mathbb{N}}\subseteq \lev_{\alpha}$ such that $f'(x;\gamma_n)<-n$ for all $n\in\mathbb{N}$. By virtue of Theorem \ref{weakgamma} (ii) the sequence $(\gamma_n)_{n\in\mathbb{N}}$ has a weak-$\Gamma$ convergent subsequence $(\gamma_{n_k})_{k\in\mathbb{N}}$. Let $\gamma_{n_k}\overset{w_{\Gamma}}\to\gamma$ then $\gamma\in\lev_{\alpha}$. Weak-$\Gamma$ sequential lower semicontinuity of $f'(x;\gamma)$ together with the assumption $f'(x;e_x)>-\infty$ then implies that $-\infty<f'(x;\gamma)\leqslant\liminf_kf'(x;\gamma_{n_k})\leqslant\liminf_k(-n_k)=-\infty$, which is a contradiction. 
	
	Now let $(\gamma_n)_{n\in\mathbb{N}}\subseteq\overline{\mathbb{B}}^c(e_x)$ be a minimizing sequence. From Theorem \ref{weakgamma} (iii) we have that $\overline{\mathbb{B}}(e_x)$ is weak-$\Gamma$ sequentially compact. Therefore there exists a weak-$\Gamma$ convergent subsequence $(\gamma_{n_k})_{k\in\mathbb N}$. Let $\gamma_{n_k}\overset{w_{\Gamma}}\to\gamma^*\in\overline{\mathbb{B}}(e_x)$ then from the weak-$\Gamma$ sequential lower semicontinuity of $f'(x;\gamma)$ it follows that $\gamma^*=\gamma_{\min}$. 
\end{proof}
From the relation a function $f$ is concave iff $-f$ is convex follows the next corollary which we present without proof.
\begin{corollary}
	If $f'(x;\gamma)$ is concave and upper semicontinuous in $\gamma$ then $f'(x;\gamma)$ is bounded from above on bounded sets and there exists $\gamma_{\max}\in\overline{\mathbb{B}}(e_x)$ such that $f'(x;\gamma_{\max})=\sup_{\gamma\in\overline{\mathbb{B}}(e_x)}f'(x;\gamma)$.
\end{corollary}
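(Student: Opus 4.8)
The plan is to deduce the corollary directly from Theorem \ref{descentgeneral} by passing to the function $g := -f$. The key elementary observation is that the geodesic derivative is odd under negation of the function: reading off the defining limit \eqref{eq:geodiff}, one gets
\[
g'(x;\gamma) = \lim_{y\overset{\gamma}\to x}\frac{-f(y)+f(x)}{d(y,x)} = -f'(x;\gamma)
\]
for every $x\in H$ and every $\gamma\in\Gamma^c_x(H)$. Consequently, the assertion that $f'(x;\cdot)$ is concave and upper semicontinuous on $\Gamma_x(H)$ is equivalent to the assertion that $g'(x;\cdot)=-f'(x;\cdot)$ is convex and lower semicontinuous on $\Gamma_x(H)$, since negation reverses the defining inequality of convexity and exchanges lower with upper semicontinuity.

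Next I would match the condition at the trivial geodesic. Applying the convention \eqref{eq:trivial-geo} to $g$ and substituting $g'=-f'$ gives
\[
g'(x;e_x) = \inf_{(\gamma_n)\subset\Gamma^c_x(H)}\liminf_{\gamma_n\to e_x}\big(-f'(x;\gamma_n)\big) = -\sup_{(\gamma_n)\subset\Gamma^c_x(H)}\limsup_{\gamma_n\to e_x}f'(x;\gamma_n),
\]
so that the hypothesis $g'(x;e_x)>-\infty$ required by Theorem \ref{descentgeneral} is exactly the statement that $f'(x;\cdot)$ remains bounded from above as $\gamma\to e_x$. This is the natural dual of the assumption $f'(x;e_x)>-\infty$, and it is what one encodes by setting $f'(x;e_x):=\sup_{(\gamma_n)}\limsup_{\gamma_n\to e_x}f'(x;\gamma_n)<+\infty$. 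Under the upper-semicontinuity hypothesis, the analogue of the remark following \eqref{eq:trivial-geo} then shows that $f'(x;\cdot)$ is upper semicontinuous on all of $\Gamma_x(H)$, so including $e_x$ costs nothing.

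With $g$ now verified to satisfy every hypothesis of Theorem \ref{descentgeneral}, I would simply invoke that theorem: $g'(x;\cdot)$ is bounded from below on bounded subsets of $\Gamma_x(H)$, and there exists $\gamma_{\min}\in\overline{\mathbb B}(e_x)$ with $g'(x;\gamma_{\min})=\inf_{\gamma\in\overline{\mathbb B}(e_x)}g'(x;\gamma)$. Translating back through $g'=-f'$ reverses both conclusions: boundedness from below of $-f'$ is boundedness from above of $f'$, and a minimizer of $-f'$ over $\overline{\mathbb B}(e_x)$ is a maximizer of $f'$ over the same ball. Setting $\gamma_{\max}:=\gamma_{\min}$ yields $f'(x;\gamma_{\max})=\sup_{\gamma\in\overline{\mathbb B}(e_x)}f'(x;\gamma)$, which is the claim.

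The only genuinely delicate point is the bookkeeping at the trivial geodesic $e_x$. Because $f'(x;\cdot)$ is a priori defined only on $\Gamma^c_x(H)$, and because Theorem \ref{descentgeneral} handles $e_x$ through the \emph{asymmetric} convention \eqref{eq:trivial-geo}, one must check that negation interchanges the $\inf$–$\liminf$ convention with the correct $\sup$–$\limsup$ convention and that the finiteness hypothesis transfers accordingly; this is implicitly the extra assumption $f'(x;e_x)<+\infty$ that makes the reduction legitimate. Once this translation is recorded, the remainder is a verbatim application of Theorem \ref{descentgeneral} to $-f$, with every inequality reversed, which is why the corollary is stated without a separate proof.
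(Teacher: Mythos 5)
Your reduction to Theorem \ref{descentgeneral} via $g=-f$, using that negation turns concavity into convexity and upper into lower semicontinuity, is exactly the route the paper intends --- it states the corollary without proof precisely on the grounds that ``$f$ is concave iff $-f$ is convex.'' Your extra care with the convention \eqref{eq:trivial-geo} at $e_x$ (that the implicit hypothesis becomes $f'(x;e_x)<+\infty$ under the dual $\sup$--$\limsup$ convention) is a correct and worthwhile clarification of a point the paper leaves tacit, but it does not change the argument.
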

\section{Other forms of weak topology}
\label{s:other-topo}
There have been previous attempts to identify weak topologies corresponding to certain notions of weak convergence in Hadamard spaces. These attempts have offered other perspectives on this topic, which we compare to our notion.
 \subsection{Kakavandi's weak topology}
Kakavandi \cite{Kakavandi} proposed a notion of a weak topology, which is based on the following observation. In a Hilbert space $(\mathcal{H},\|\cdot\|)$ equipped with its canonical norm $\|\cdot\|$ a sequence $(x_n)$ converges weakly to an element $x\in \mathcal{H}$ iff $\lim_{n\to \infty}\langle x_n,y\rangle=\langle x,y\rangle$ for all $y\in \mathcal{H}$. This is equivalent to $ \lim_{n\to \infty}\langle x_n-z,y-z\rangle=\langle x-z,y-z\rangle$ for all $y,z\in \mathcal{H}$. Then the identity 
 \begin{equation}
 \label{eq:identity}
 \langle x-z,y-w\rangle=\frac{1}{2}(|x-y|^2+|z-w|^2-|x-w|^2-|z-y|^2)
 \end{equation}
 gives rise to the possibility of extending the definition of weak convergence to a general metric space $(X,d)$ by expressing the right side of \eqref{eq:identity} in terms of the metric $d(\cdot,\cdot)$. Following Berg and Nikolaev \cite{Berg} consider the Cartesian product $X\times X$ where $X$ is a general metric space. Each pair $(x,y)\in X\times X$ determines a so-called \emph{bound vector} which is denoted by $\overrightarrow{xy}$. The point $x$ is called the tail of $\overrightarrow{xy}$ and $y$ is called the head. The zero bound vector is $\overrightarrow{0}_x=\overrightarrow{xx}$. The length of a bound vector $\overrightarrow{xy}$ is defined as the metric distance $d(x,y)$. Furthermore, if $\overrightarrow{u}:=\overrightarrow{xy}$, then $-\overrightarrow{u}:=\overrightarrow{yx}$. Let 
 \begin{equation}
 \label{eq:qlin}
 \langle\overrightarrow{xz},\overrightarrow{yw}\rangle:=\frac{1}{2}(d(x,y)^2+d(z,w)^2-d(x,w)^2-d(z,y)^2) \ . 
 \end{equation}
Kakavandi's notion of  weak convergence is defined in the following way. A sequence $(x_n)$ in a Hadamard space $(H,d)$ converges weakly to an element $x\in H$ if and only if $\lim_{n\to \infty}\langle\overrightarrow{xx_n},\overrightarrow{xy}\rangle=0$ for all $y\in H$. This form of weak convergence coincides with the usual weak convergence in a Hilbert space. 
It turns out, however,  that Kakavandi's notion of convergence does not coincide with $\Delta$-convergence, see Example 4.7 in \cite{Kakavandi}. 

There is a natural topology associated to Kakavandi's convergence generated by sets of the form
\begin{equation}
 \label{eq:Ktopo}
 W(x,y;\varepsilon):=\{z\in H\,|\,|\langle\overrightarrow{xz},\overrightarrow{xy}\rangle|<\varepsilon\},\quad \text{for any}\,x,y\in H,\varepsilon>0.
\end{equation}
More precisely, the family of sets $\{W(x,y;\varepsilon)\,|\, x,y\in H,\varepsilon>0\}$ forms a subbasis for Kakavandi's topology $\tau_K$. One has $x_n\overset{K}\to x$ if and only if $x_n\overset{\tau_K}\to x$, see \cite[Theorem 3.2]{Kakavandi}. Moreover, Kakavandi's topology is Hausdorff.

  \begin{theorem}
  \label{theorem:Kakavandi}
 Let $(H,d)$ be a Hadamard space. Then the followings hold:
 \begin{enumerate}[(i)]
  \item$\tau_w$ is coarser than $\tau_K$.
  \item Kakavandi convergence and weak convergence coincide in a locally compact space.
 \end{enumerate}
 \end{theorem}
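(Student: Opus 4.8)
The plan is to treat (i) as the substantive claim and obtain (ii) as a consequence of it together with the locally compact results already established. For (i), saying $\tau_w$ is coarser than $\tau_K$ is the same as saying the identity $(H,\tau_K)\to(H,\tau_w)$ is continuous, which I would test on nets: it suffices to show that $x_\alpha\overset{\tau_K}\to x$ implies $x_\alpha\overset{\tau_w}\to x$. Unwinding the subbasis \eqref{eq:Ktopo}, and noting $\langle\overrightarrow{xx},\overrightarrow{xy}\rangle=0$ so that each $W(x,y;\varepsilon)$ is a subbasic neighbourhood of $x$, convergence $x_\alpha\overset{\tau_K}\to x$ is equivalent to $\langle\overrightarrow{xx_\alpha},\overrightarrow{xy}\rangle\to0$ for every $y\in H$. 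The goal then reduces to showing that, for each $\gamma\in\Gamma_x(H)$ and $\varepsilon>0$, the net $x_\alpha$ is eventually in the elementary set $U_x(\varepsilon;\gamma)$, i.e. $d(x,P_\gamma x_\alpha)<\varepsilon$ eventually.

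The bridge between the quasilinear form \eqref{eq:qlin} and the projection distance rests on two estimates. First, writing $q:=P_\gamma z$ and applying the projection inequality \eqref{eq:projection} to the convex set $\gamma$ with the test point $x\in\gamma$ gives $d(x,z)^2\ge d(q,z)^2+d(x,q)^2$; inserting this into \eqref{eq:qlin} yields the clean bound
\[
\langle\overrightarrow{xz},\overrightarrow{xq}\rangle=\tfrac12\bigl(d(z,q)^2-d(x,q)^2-d(z,x)^2\bigr)\le -\,d(x,P_\gamma z)^2 .
\]
Second, parametrising $\gamma$ on $[0,1]$ with $p=\gamma(1)$, $L=d(x,p)$, and setting $h(t):=\langle\overrightarrow{xz},\overrightarrow{x\gamma(t)}\rangle=\tfrac12 d(z,\gamma(t))^2-\tfrac12 t^2L^2-\tfrac12 d(z,x)^2$, the strong-convexity characterisation \eqref{eq:quadratic} applied to $t\mapsto d(z,\gamma(t))^2$ gives the chord bound $h(t)\le t\,h(1)$. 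The first estimate is the heart of the matter: it shows that whenever $z$ projects far along $\gamma$, the form measured against the \emph{moving} projection point $q$ is bounded away from zero, $\langle\overrightarrow{xz},\overrightarrow{xq}\rangle\le-\varepsilon^2$, and this bound is independent of $d(x,z)$.

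To assemble the argument I would argue by contradiction, assuming $d(x,P_\gamma x_\alpha)\ge\varepsilon$ cofinally. Since each $q_\alpha:=P_\gamma x_\alpha$ lies in the compact segment $[x,p]$, a subnet satisfies $q_\alpha\to q^\ast\in[x,p]$ with $d(x,q^\ast)\ge\varepsilon$. Kakavandi convergence at the \emph{fixed} test point $y=q^\ast$ gives $\langle\overrightarrow{xx_\alpha},\overrightarrow{xq^\ast}\rangle\to0$, whereas the first inequality gives $\langle\overrightarrow{xx_\alpha},\overrightarrow{xq_\alpha}\rangle\le-\varepsilon^2$; comparing the two along $q_\alpha\to q^\ast$ should force the contradiction, hence $d(x,P_\gamma x_\alpha)\to0$ and $x_\alpha\overset{\tau_w}\to x$. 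The hard part will be exactly this comparison when the net is unbounded: the sensitivity of $y\mapsto\langle\overrightarrow{xx_\alpha},\overrightarrow{xy}\rangle$ to moving its second argument scales with $d(x,x_\alpha)$, so the closeness $q_\alpha\to q^\ast$ need not pass to the forms. I would control this using the minimality $d(x_\alpha,q_\alpha)\le d(x_\alpha,q^\ast)$ (since $q_\alpha$ is the nearest point on $\gamma$) together with the chord bound $h_\alpha(t)\le t\,h_\alpha(1)$ to absorb the part of the form that grows with $d(x,x_\alpha)$; this is the single step where genuine work is required.

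Part (ii) then follows without further geometry. By Corollary \ref{c:weaklyproper} a locally compact Hadamard space is weakly proper, so by Theorem \ref{wtopBacak} $\tau_w$-convergence coincides with weak convergence; combined with (i), which makes $\tau_K$ finer and hence $\tau_K$-convergence imply $\tau_w$-convergence, Kakavandi convergence (equal to $\tau_K$-convergence by \cite[Theorem 3.2]{Kakavandi}) implies weak convergence. Conversely, weak convergence equals strong convergence in the locally compact setting by Corollary \ref{c:wconv=sconv}, and strong convergence trivially implies Kakavandi convergence because the quasilinear form \eqref{eq:qlin} is continuous in the metric. Chaining these gives Kakavandi $\Rightarrow$ weak $\Rightarrow$ strong $\Rightarrow$ Kakavandi, so all three notions coincide; here Corollary \ref{c:boundedsequence} guarantees the relevant sequences are bounded, which is precisely what neutralises the unbounded obstruction met in the general argument for (i).
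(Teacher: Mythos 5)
Your strategy for (i) --- testing coarseness on nets and reducing it to the claim that $\langle\overrightarrow{xx_\alpha},\overrightarrow{xy}\rangle\to 0$ for all $y$ forces $d(x,P_\gamma x_\alpha)\to 0$ --- is viable, and your two estimates are correct and carry all of the geometric content: the projection inequality \eqref{eq:projection} does give $\langle\overrightarrow{xz},\overrightarrow{xP_\gamma z}\rangle\le -d(x,P_\gamma z)^2$ (with the sign convention of \eqref{eq:qlin}), and \eqref{eq:quadratic} does make $h(t)=\langle\overrightarrow{xz},\overrightarrow{x\gamma(t)}\rangle$ convex with $h(0)=0$, hence $h(s)\le (s/t)\,h(t)$ for $0<s\le t\le 1$. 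The gap is exactly the step you flag, and as set up it does not close: comparing $\langle\overrightarrow{xx_\alpha},\overrightarrow{xq_\alpha}\rangle\le-\varepsilon^2$ with $\langle\overrightarrow{xx_\alpha},\overrightarrow{xq^*}\rangle\to 0$ requires an \emph{upper} bound on the form at the accumulation point $q^*$ in terms of its value at the moving point $q_\alpha$, and neither of your tools supplies it when $q^*$ lies beyond $q_\alpha$ on $\gamma$: minimality gives $d(x_\alpha,q^*)^2-d(x_\alpha,q_\alpha)^2\ge 0$, which has the wrong sign and can be as large as roughly $2\,d(x_\alpha,q_\alpha)\,d(q_\alpha,q^*)$ for an unbounded net, while the chord bound controls $h_\alpha$ only at parameters \emph{below} $t_\alpha$, not above.

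The repair follows from your own chord bound and removes the subnet/compactness step entirely. Write $q_\alpha=\gamma(t_\alpha)$ with $t_\alpha L=d(x,q_\alpha)\ge\varepsilon$ cofinally (we may assume $\varepsilon\le L$, else the condition is vacuous), and test at the \emph{fixed} point $y_0:=\gamma(\varepsilon/L)$, whose parameter $t_0=\varepsilon/L$ satisfies $t_0\le t_\alpha$ for every index in question; then $h_\alpha(t_0)\le (t_0/t_\alpha)\,h_\alpha(t_\alpha)\le (t_0/t_\alpha)(-\varepsilon^2)\le-\varepsilon^3/L$ cofinally, contradicting $h_\alpha(t_0)\to 0$. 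Observe that this argument in fact proves the set inclusion $W(x,\gamma(\varepsilon/L);\varepsilon^3/L)\subseteq U_x(\varepsilon;\gamma)$, which is precisely the shape of the paper's proof: there one exhibits $W(x,y;\delta^2)\subseteq U_x(\delta;\gamma)$ with $y\in\gamma$ at distance $\delta$ from $x$, obtaining the lower bound $|\langle\overrightarrow{xz},\overrightarrow{xy}\rangle|\ge\delta^2$ from the obtuse-angle property of projections and a comparison-triangle computation rather than from \eqref{eq:projection} and convexity of $h$. Once repaired, your route is a legitimate, more computational alternative, and phrasing it as a containment of subbasic sets avoids nets altogether. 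Your part (ii) is correct and agrees in substance with the paper's one-line deduction from Theorem \ref{wtopstop} together with (i); the only minor caveat is that your claimed \emph{equivalence} of $\tau_K$-net-convergence with $\langle\overrightarrow{xx_\alpha},\overrightarrow{xy}\rangle\to0$ for all $y$ would require handling subbasic sets $W(u,v;\varepsilon)$ based at points $u\ne x$ for the converse direction, but only the forward implication is used in your argument.
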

 \begin{proof}
 To show (i) let $U\in\tau_w$ be a weakly open set and $x\in U$. By construction of the topology $\tau_w$ there exist a finite number of geodesic segments $\{\gamma_i\}_{i=1}^n$ starting from $x$ such that $\bigcap_{i=1}^nU_x(\delta;\gamma_i)\subset U$. For simplicity suppose $n=1$, i.e., $U_x(\delta;\gamma)\subset U$ for some $\gamma$ starting at $x$. Let $y\in\gamma$ such that $d(x,y)=\delta$ and $\varepsilon:=\delta^2$.
 Consider the open set $W(x,y;\varepsilon)$ in $\tau_K$.  
 Let $z\in W(x,y;\varepsilon)$, then
 $$|\langle\overrightarrow{xz},\overrightarrow{xy}\rangle|= d(x,z)d(x,y)|\cos\theta|<\varepsilon \ , $$
where $\theta$ is the comparison angle at vertex $\overline{x}$ in the Euclidean comparison triangle $ \Delta(\overline{x}, \overline{y}, \overline{z})$ determined by the edge lengths of the triangle $\Delta(x,y,z)$.  Suppose that $d(x,P_{\gamma}z)\geqslant \delta$. Without loss of generality assume that $P_{\gamma}z=y$. Since $\gamma$ is a closed convex set,  by a property of projections we have that the Alexandrov angle $\angle_y([y,x],[y,z])$ at $y$ between the geodesic segments $[y,x]$ and $[y,z]$ is at least $\pi/2$. By nonpositive curvature the comparison angle ${\angle}_{\overline{y}}([\overline{y},\overline{x}],[\overline{y},\overline{z}])$ is greater or equal to $\pi/2$. Then the projection of $\overline{z}$ onto the line $\overline{\gamma}$ that extends beyond the segment $[\overline{x},\overline{y}]$ lies outside this segment, i.e., $|\overline{x}P_{\overline{\gamma}}\overline{z}|\geqslant |\overline{x}\overline{y}|=\delta$. On the other hand, we have 
 $|\overline{x}P_{\overline{\gamma}}\overline{z}|=|\overline{x}\overline{z}|\cos\theta=d(x,z)\cos\theta$, which implies that
 $$|\langle\overrightarrow{xz},\overrightarrow{xy}\rangle|= d(x,z)d(x,y)\cos\theta=|\overline{x}\overline{z}||\overline{x}\overline{y}| \cos\theta=|\overline{x}P_{\overline{\gamma}}\overline{z}||\overline{x}\overline{y}|\geqslant \delta^2=\varepsilon.$$
 This gives a contradiction.
Theorem \ref{wtopstop} and (i) imply (ii).
 \end{proof}
 
 The previous theorem assures that our weak topology $\tau_w$ is coarser than Kakavandi's topology, regardless of whether or not the underlying space is weakly proper. The finer a topology, the fewer compactness results can be expected. Indeed, we are not aware of compactness results for bounded closed convex sets in Kakavandi's topology.

 \subsection{Monod's weak topology} 
Monod \cite{Monod} proposed a topology on a Hadamard space $(H,d)$, which we denote by $\tau_M$. The topology $\tau_M$ is the weakest topology on $(H,d)$ such that any $\tau_s$-closed convex set is $\tau_M$-closed, where $\tau_s$ is the usual metric topology on $(H,d)$. Monod's topology was studied in detail by Kell \cite{Kell} who refers to it as the co-convex topology.  
 The next theorem relates the topologies discussed so far.
 
 \begin{proposition}
 \label{p:threetop}
 The following chain of inclusions holds: $\tau_M\subseteq\tau_w\subseteq\tau_K$.
All three topologies coincide with the usual weak topology whenever $(H,d)$ is a Hilbert space.
 \end{proposition}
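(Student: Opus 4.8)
The plan is to treat the two inclusions of the chain separately and then specialize to the Hilbert setting. The inclusion $\tau_w\subseteq\tau_K$ requires no new work: it is exactly the content of Theorem~\ref{theorem:Kakavandi}(i). For the inclusion $\tau_M\subseteq\tau_w$ I would argue from the defining property of Monod's topology. By definition $\tau_M$ is the \emph{coarsest} topology on $H$ in which every $\tau_s$-closed convex set is closed. Theorem~\ref{wtopcvx} tells us that in $\tau_w$ every strongly closed convex set is weakly closed, so $\tau_w$ is itself a member of the family of topologies making all $\tau_s$-closed convex sets closed. Minimality of $\tau_M$ within that family then yields $\tau_M\subseteq\tau_w$, completing the chain $\tau_M\subseteq\tau_w\subseteq\tau_K$.

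For the Hilbert space claim I would start from Proposition~\ref{p:Hilbertweaklyprop}, which already identifies $\tau_w$ with the usual weak topology. To collapse $\tau_K$ onto it, I would use that in a Hilbert space the quasilinearization \eqref{eq:qlin} is the genuine inner product, $\langle\overrightarrow{xz},\overrightarrow{xy}\rangle=\langle z-x,y-x\rangle$. Each subbasic Kakavandi set $W(x,y;\varepsilon)$ then becomes the preimage of $(-\varepsilon,\varepsilon)$ under the weakly continuous affine functional $z\mapsto\langle z-x,y-x\rangle$, hence is weakly open; since these sets generate $\tau_K$, this shows that $\tau_K$ is contained in the usual weak topology. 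As the usual weak topology equals $\tau_w$ and $\tau_w\subseteq\tau_K$ by the chain, it follows that $\tau_K=\tau_w$ and both coincide with the usual weak topology.

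It remains to pin down $\tau_M$ in the Hilbert case, and this is the step I expect to be the main obstacle, since the defining property of $\tau_M$ only yields one inclusion directly. From the chain we already have $\tau_M\subseteq\tau_w$, and $\tau_w$ is the usual weak topology, so I would aim to prove the reverse inclusion, that the usual weak topology is contained in $\tau_M$. My plan is to work at the level of a subbasis for the weak topology: a typical subbasic weakly open set is a slab $S=\{x\in\mathcal H:|\langle x-x_0,v\rangle|<\varepsilon\}$, whose complement is the union of the two closed halfspaces $\{x:\langle x-x_0,v\rangle\geqslant\varepsilon\}$ and $\{x:\langle x-x_0,v\rangle\leqslant-\varepsilon\}$. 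Each halfspace is strongly closed and convex, hence $\tau_M$-closed by the very definition of $\tau_M$, and a finite union of $\tau_M$-closed sets is $\tau_M$-closed; thus $S$ is $\tau_M$-open. As such slabs form a subbasis for the usual weak topology, I conclude that the usual weak topology is contained in $\tau_M$, and squeezing with $\tau_M\subseteq\tau_w$ forces $\tau_M$, $\tau_w$, and $\tau_K$ all to equal the usual weak topology. The only genuine care needed throughout is to reason at the level of subbases rather than merely matching the three notions of convergence, since coincidence of convergences would not by itself guarantee coincidence of the topologies.
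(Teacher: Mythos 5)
Your proof is correct. For the chain of inclusions you follow the same route as the paper: $\tau_w\subseteq\tau_K$ is quoted from Theorem \ref{theorem:Kakavandi}(i), and $\tau_M\subseteq\tau_w$ is obtained from Theorem \ref{wtopcvx} together with the minimality in the definition of $\tau_M$ --- this is exactly the paper's argument. Where you genuinely diverge is the Hilbert-space statement. The paper disposes of it by combining Proposition \ref{p:Hilbertweaklyprop} (that $\tau_w$ equals the usual weak topology) with a citation to Monod's Example 18 for the facts that $\tau_M$ and $\tau_K$ agree with the usual weak topology on a Hilbert space. You instead prove both facts directly: for $\tau_K$ you observe that the quasilinearization \eqref{eq:qlin} reduces to $\pm\langle z-x,y-x\rangle$, so every subbasic set $W(x,y;\varepsilon)$ is the preimage of an interval under a weakly continuous affine functional, giving $\tau_K\subseteq$ (usual weak) and hence equality via the chain; for $\tau_M$ you note that the complement of a subbasic weak slab is the union of two strongly closed convex halfspaces, each $\tau_M$-closed by the very definition of $\tau_M$, so the usual weak topology is contained in $\tau_M$ and the chain squeezes everything together. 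This buys a self-contained proof of the Hilbert-space claim where the paper relies on an external reference, and your closing caution about arguing at the level of subbases rather than merely matching notions of convergence is well placed. The only cosmetic point is the sign of the quasilinearization in a Hilbert space, which is immaterial here since $W(x,y;\varepsilon)$ involves an absolute value.
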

 \begin{proof}
 Let  $(H,d)$ be a Hadamard space. Theorem \ref{wtopcvx} implies that a convex set is $\tau_w$-closed if and only if it is $\tau_s$-closed. 
 	Hence $\tau_M\subseteq \tau_w$. The last statement of the theorem follows from the fact that Hilbert spaces are weakly proper together with the fact that $\tau_M$ and $\tau_K$ coincide with the usual weak topology on any Hilbert space, see \cite[Example 18]{Monod}. 
 \end{proof}
It is known that if $K\subseteq H$ is compact then the restrictions of $\tau_M$ and $\tau_s$ to $K$ coincide (see \cite[Lemma 17]{Monod}). Hence, in view of Proposition \ref{p:threetop}, the restrictions of all three weak topologies to a compact subset $K$ of a Hadamard space $H$ coincide with the strong topology. An important property of Monod's topology is that any $\tau_s$-closed convex and bounded set is $\tau_M$-compact, see \cite[Theorem 14]{Monod}. However, it turns out that in general a Hadamard space is not Hausdorff with respect to $\tau_M$. For example, the Euclidean cone of an infinite dimensional Hilbert space is not Hausdorff when equipped with $\tau_M$ \cite[Example 3.6]{Kell}).

 \subsection{Geodesically monotone operators}
A continuous operator $T:H\to H$ is said to be geodesically monotone if for all $x_0,x_1\in H$ the real-valued function $\varphi:[0,1]\to\mathbb{R}_+$ defined by $\varphi(\alpha;x_0, x_1):=d(Tx_0,Tx_{\alpha})$ is monotone in $\alpha$ where $x_{\alpha}:=(1-\alpha) x_0\oplus \alpha x_1$ is the convex combination along the geodesic from $x_0$ to $x_1$.
The next theorem provides a sufficient condition for an arbitrary Hadamard space to be Hausdorff in Monod's topology.
 \begin{theorem} 
 \label{geomonotone}
 If the projection $P_{\gamma}$ is geodesically monotone for all geodesic segments $\gamma$, then $(H,\tau_M)$ is Hausdorff. 
 \end{theorem}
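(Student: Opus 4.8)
The plan is to exploit the defining property of Monod's topology: since $\tau_M$ is the weakest topology making every $\tau_s$-closed convex set $\tau_M$-closed, the complement of any $\tau_s$-closed convex set is automatically $\tau_M$-open. Thus, given distinct $x,y\in H$, it suffices to produce two disjoint $\tau_M$-open sets separating them, and I will build these as half-space-like sets out of the projection onto the geodesic $[x,y]$. Concretely, let $\gamma:[0,1]\to H$ be the geodesic with $\gamma(0)=x$, $\gamma(1)=y$, put $\ell:=d(x,y)>0$, and for $z\in H$ write $P_{\gamma}z=\gamma(t(z))$ so that $d(x,P_{\gamma}z)=t(z)\,\ell$. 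Define
\[
H_x:=\{z\in H: d(x,P_{\gamma}z)<\ell/2\},\qquad H_y:=\{z\in H: d(x,P_{\gamma}z)>\ell/2\}.
\]
These are disjoint, and since $P_{\gamma}x=x$, $P_{\gamma}y=y$ we have $x\in H_x$ and $y\in H_y$. It then remains to show each is $\tau_M$-open, i.e. that its complement is $\tau_s$-closed and convex.

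The key step is a \emph{betweenness} property of the projection: for any $z_0,z_1\in H$ and $z_\alpha:=(1-\alpha)z_0\oplus\alpha z_1$, the parameter $t(z_\alpha)$ lies between $t(z_0)$ and $t(z_1)$ for every $\alpha\in[0,1]$. This is exactly where geodesic monotonicity of $P_{\gamma}$ enters. Applying the hypothesis with base point $z_0$, the function $\alpha\mapsto d(P_{\gamma}z_0,P_{\gamma}z_\alpha)=|t(z_0)-t(z_\alpha)|\,\ell$ is monotone; since it vanishes at $\alpha=0$ and is nonnegative, it must be non-decreasing, giving $|t(z_0)-t(z_\alpha)|\le|t(z_0)-t(z_1)|$. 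Applying the hypothesis with base point $z_1$ to the reversed geodesic $(1-\beta)z_1\oplus\beta z_0=z_{1-\beta}$ yields in the same manner $|t(z_1)-t(z_\alpha)|\le|t(z_1)-t(z_0)|$. Assuming without loss of generality $t(z_0)\le t(z_1)$, the first inequality forces $t(z_\alpha)\le t(z_1)$ while the second forces $t(z_\alpha)\ge t(z_0)$, so $t(z_\alpha)\in[t(z_0),t(z_1)]$, as claimed.

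With betweenness established, the complements are convex: if $z_0,z_1\in H\setminus H_x=\{z:t(z)\ge 1/2\}$, then $t(z_0),t(z_1)\ge 1/2$, hence $t(z_\alpha)\ge\min\{t(z_0),t(z_1)\}\ge 1/2$ and $z_\alpha\in H\setminus H_x$; symmetrically $H\setminus H_y=\{z:t(z)\le 1/2\}$ is convex. Both complements are $\tau_s$-closed because $P_{\gamma}$ is nonexpansive, hence continuous, by \eqref{eq:projne}, so $z\mapsto d(x,P_{\gamma}z)$ is continuous and the sets $\{t(z)\ge 1/2\}$, $\{t(z)\le 1/2\}$ are closed. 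Therefore $H_x$ and $H_y$ are complements of $\tau_s$-closed convex sets, hence $\tau_M$-open, and they separate $x$ and $y$, which proves that $(H,\tau_M)$ is Hausdorff.

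The main obstacle I anticipate is the betweenness lemma, and specifically the need to invoke geodesic monotonicity from \emph{both} endpoints: a single application only confines $t(z_\alpha)$ to an interval symmetric about one endpoint, which is too weak to conclude $t(z_\alpha)\ge 1/2$, whereas combining the two one-sided bounds pins $t(z_\alpha)$ into $[t(z_0),t(z_1)]$. A minor but essential point is upgrading "monotone" to "non-decreasing" using $\varphi(0)=0$ together with $\varphi\ge 0$.
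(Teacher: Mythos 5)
Your proof is correct and follows essentially the same route as the paper: both arguments realize half-space-like sets $\{z: d(x,P_{\gamma}z)\leqslant c\}$ as $\tau_s$-closed convex sets (hence $\tau_M$-closed) and separate $x$ and $y$ by their complements, with geodesic monotonicity supplying convexity. Your explicit two-sided betweenness lemma (applying monotonicity from both endpoints to pin $t(z_\alpha)$ into $[t(z_0),t(z_1)]$) is a welcome elaboration of the paper's terser claim that $P_{\gamma}z_{\alpha}\in[P_{\gamma}z_0,P_{\gamma}z_1]$, but it is the same underlying idea.
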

 \begin{proof}
 Let $x,y\in H$ be two distinct points and $\gamma:[0,1]\to H$ a geodesic such that $\gamma(0)=x,\gamma(1)=y$. Let $l>0$ denote the length of $\gamma$. For some fixed number $0<\varepsilon<l$ let 
 $C(x,\varepsilon):=\{z\in H\,|\, d(x,P_{\gamma}z)\leqslant \varepsilon\}$. We claim that $C(x,\varepsilon)$ is a closed convex set. Closedness follows immediately since $P_{\gamma}$ is nonexpansive and therefore continuous.  Let $z_0,z_1\in C(x,\varepsilon)$ be two distinct elements. Let $z_{\alpha}:=(1-\alpha) z_0\oplus \alpha z_1$ for some $\alpha\in(0,1)$. By assumption, $P_{\gamma}$ is a geodesically monotone operator; thus, $d(P_{\gamma}z_{0},P_{\gamma}z_{\alpha})$ is monotone in $\alpha$, implying  that $P_{\gamma}z_{\alpha}\in[P_{\gamma}z_0,P_{\gamma}z_1]$. The estimate $d(x,P_{\gamma}z_{\alpha})\leqslant\max\{d(x,P_{\gamma}z_0), d(x,P_{\gamma}z_1)\}\leqslant\varepsilon$ implies that $z_{\alpha}\in C(x,\varepsilon)$. By definition of $\tau_M$ it follows that $C(x,\varepsilon)$ is $\tau_M$-closed . Hence $H\setminus C(x,\varepsilon)$ is $\tau_M$-open. By construction $y\in H\setminus C(x,\varepsilon)$. Using the same argument, it follows that $C(y,\varepsilon):=\{z\in H\;|\;d(y,P_{\gamma}z)\leqslant l-\varepsilon\}$ is $\tau_M$-closed. Hence, $H\setminus C(y,\varepsilon)$ is a $\tau_M$-open set containing $x$. It is evident by construction that $(H\setminus C(x,\varepsilon))\cap (H\setminus C(y,\varepsilon))=\emptyset$. 
Therefore, $(H,\tau_M)$ is a Hausdorff space. 
 \end{proof}
 The contrapositive of this statement 
 together with \cite[Example 3.6]{Kell} shows that the projection $P_{\gamma}$ is not a geodesically monotone operator in a general Hadamard space. This is in contrast with projections to geodesic segments  in Hilbert spaces, which are always geodesically monotone. Notice that the converse of Theorem \ref{geomonotone} is not true as Figure \ref{fig:counterexample} shows.  Another interesting implication from the Example illustrated in Figure \ref{fig:counterexample} relates to the so called {\em normal cone}. Given a closed convex set $C\subseteq H$ and $p\in C$ we define the normal cone at $p\in C$ and denote it by $N(p,C)$ as the set of all elements in $H$ such that the geodesic segment connecting the given element with the point $p$ makes an Aleksandrov angle no less then $\pi/2$ with any geodesic segment connecting $p$ with another point in the set $C$, i.e.
 \begin{equation}
 \label{eq:normalcone}
 N(p,C):=\{x\in H\,:\, \angle_p([x,p],[p,y])\geqslant\pi/2,\,\forall y\in C\}.
 \end{equation}
 Figure \ref{fig:counterexample} tells us that $N(p,C)$ is not convex. This is in contrast with a basic result in Hilbert spaces that $N(p,C)$ is always a closed convex set. 
 \begin{figure}[h!]
\centering
\includegraphics[width=8cm]{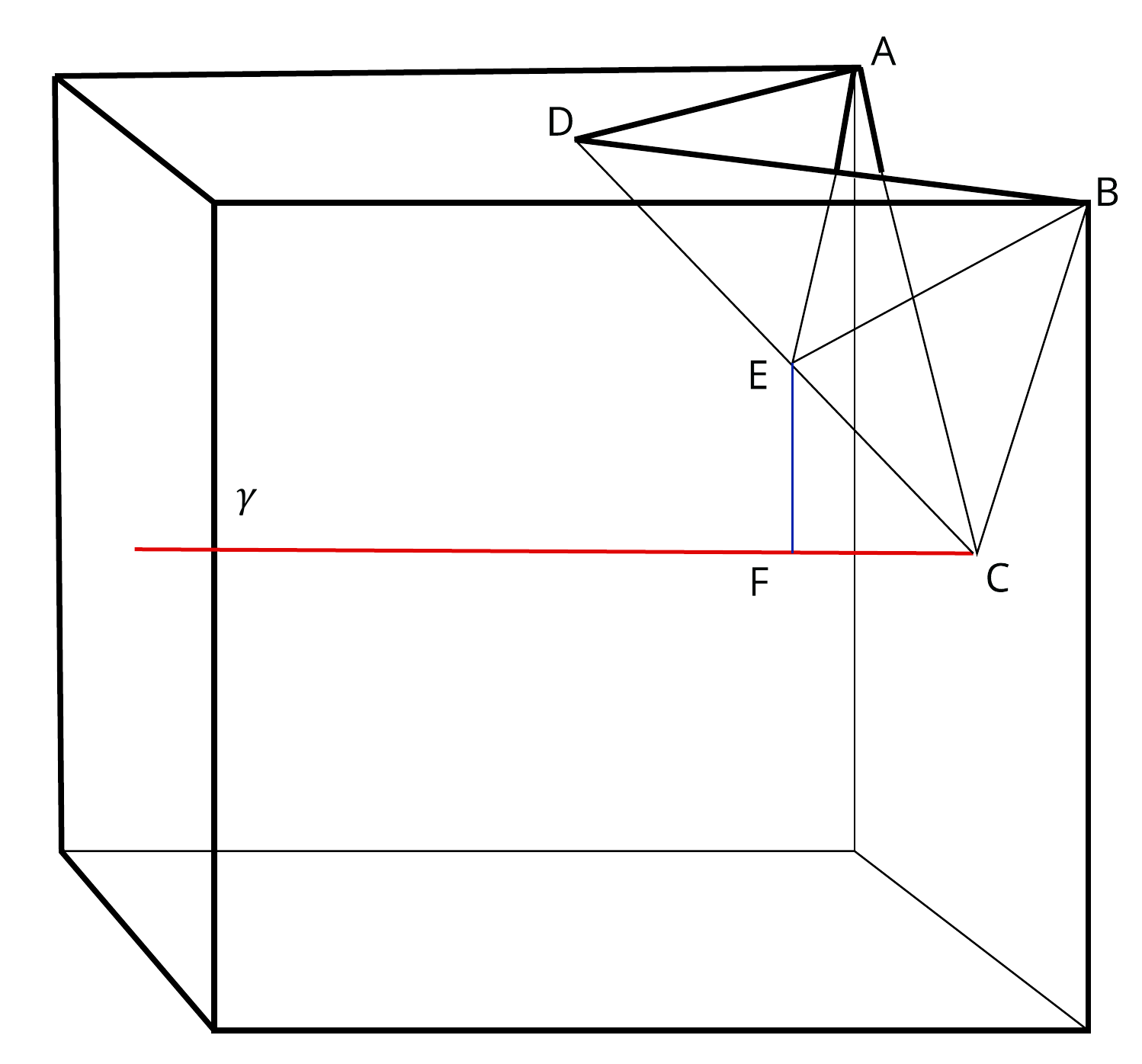}
\caption{An example of a Hadamard space that arises by removing the solid wedge $(ABCD)$ from a solid cube. Both endpoints $A$ and $B$ of the geodesic segment $[A,B]$ project to the point $C$ on the geodesic $\gamma$; however, the midpoint $E$ of $[A,B]$ projects to the point $F$, which is distinct from $C$. This illustrates that projection to geodesics might not preserve monotonicity.}\label{fig:counterexample}
\end{figure}

\begin{remark}
	\label{r:nice-property} Geodesic monotonicity of metric projections onto geodesics coincides with the so-called ''(nice) N-property'' introduced in \cite{Espi}.
\end{remark}

\subsection{Discussion} 
\label{ss:discussion}
In relation to weak convergence of bounded sequences in $\CAT(0)$ spaces \cite[Lytchak--Petrunin]{Lytchak} recently introduced a topology $\tau$ on a $\CAT(0)$ space $(X,d)$ in the following way: a set $S\subseteq X$ is $\tau$-closed in $X$ if any bounded sequence $(x_n)_{n\in\mathbb N}$ in $S$ that weakly converges to $x\in X$ implies $x\in S$. By construction $\tau$ is finer than $\tau_w$. This topology differs from $\tau_w$ in that $\tau$ characterizes only bounded sequences (nets). Now concerning the example in \cite[\S 4]{Lytchak} it is proved that it is not Hausdorff in $\tau$, consequently it cannot be Hausdorff in $\tau_w$ since the latter is a coarser topology. By Lemma \ref{wtopHaus} it follows then that this example of Hadamard space cannot be weakly proper. Therefore not all Hadamard spaces are weakly proper.

\newpage
 \bibliographystyle{amsplain}
  \bibliography{literature}



\bigskip
\subsection*{Acknowledgements} I am very grateful to my advisers for their unconditional help during the writing of my PhD. I thank also the anonymous referee of my PhD thesis for their corrections. Credit should be given to Ph. Miller who noted that the assumption of weakly properness in Theorem \ref{wtopcvx} would have been superfluous.

\subsection*{Funding} This research was supported by DAAD (Deutscher Akademischer Austauschdienst) scholarship and partially by DFG under Germany's Excellence Strategy--The Berlin Mathematics Research Center MATH+ (EXC-2046/1,  Projektnummer:  390685689).
\bigskip

\end{document}